\theoremstyle{plain}
\newcommand{\E}{\mathsf E}
\newcommand{\R}{\mathbb R}
\newcommand{\cC}{\mathcal C}
\newcommand{\cS}{\mathcal S}
\newcommand{\D}{\mathcal D}
\newcommand{\cA}{\mathcal A}
\newcommand{\cL}{\mathcal L}
\newcommand{\cF}{\mathcal F}
\newcommand{\bF}{\mathbb F}
\newcommand{\cT}{\mathcal T}
\newcommand{\ud}{\mathrm d}
\def\P{{\mathsf P}}
\def\Q{{\mathsf Q}}
\def\eps{{\varepsilon}}
\newcommand{\I}{\mathds{1}}
\newtheorem{theorem}{Theorem}[section]
\newtheorem{lemma}[theorem]{Lemma}
\newtheorem{corollary}[theorem]{Corollary}
\newtheorem{proposition}[theorem]{Proposition}
\newtheorem{assumption}[theorem]{Assumption}
\newtheorem{remark}[theorem]{Remark}
\theoremstyle{definition}
\title[A class of recursive optimal stopping problems]{A class of recursive optimal stopping problems \\ with applications to stock trading}
\author[Katia Colaneri]{Katia Colaneri}
\author[Tiziano De Angelis]{Tiziano De Angelis}
\subjclass[2000]{Primary 60G40; Secondary 91G80}
\keywords{optimal stopping theory; recursive optimal stopping problems; stock selling}
\address{K.~Colaneri, Department of Economics and Finance, University of Rome Tor Vergata, Via Columbia 2, 00133 Roma, Italy.}
\email{katia.colaneri@uniroma2.it}
\address{T.~De Angelis, Department ESOMAS, University of Turin, C.so Unione Sovietica, 218bis, 10134 Torino, Italy; Collegio Carlo Alberto, Piazza Arbarello 8, 10122, Torino, Italy;}
\email{tiziano.deangelis@unito.it}
\date{\today}
\thanks{{\em Acknowledgements} T.~De Angelis gratefully acknowledges support from EPSRC First Grant EP/R021201/1. This work was presented at the 2nd Leeds Conference on Stochastic Control, Ambiguity and Games (Leeds 2019). We thank E.~Bayraktar, J.~Ob\l\'oj and S.~Villeveuve for useful comments. Part of this work was written when both authors were affiliated to the School of Mathematics of the University of Leeds. The authors are grateful to two anonymous referees for insightful comments on the economic interpretation of the model.}
\begin{document}

\begin{abstract}
In this paper we introduce and solve a class of optimal stopping problems of recursive type. In particular, the stopping payoff depends directly on the value function of the problem itself. In a multi-dimensional Markovian setting we show that the problem is well posed, in the sense that the value is indeed the unique solution to a fixed point problem in a suitable space of continuous functions, and an optimal stopping time exists. We then apply our class of problems to a model for stock trading in two different market venues and we determine the optimal stopping rule in that case.
\end{abstract}

\maketitle

\section{Introduction}
In this paper we introduce a class of optimal stopping problems whose stopping payoff is defined in terms of the value function of the problem itself in a recursive way.
To gain some intuition on the nature of the problem we consider an individual who is allowed to choose the entry time to one of two possible (investment) projects, say $A$ and $B$, with random payoffs.
If the individual chooses project $A$, she immediately receives the corresponding payoff and the optimisation is over. Project $B$ has the potential for larger revenues but it is subject to a higher degree of uncertainty. In particular, when the individual chooses project $B$, she is not sure whether the project will succeed or not. This information is revealed only at a future (random) time. If project $B$ is successful, the individual receives the associated payoff. If instead the project fails, the optimisation must start afresh.

In our class of recursive optimal stopping problems, we consider a right-continuous, $\R^d$-valued strong Markov process $X$ and denote by $\E_x$ the expectation conditional upon $X_0=x$. We want to find a continuous function $v$ such that for every $x \in \R^d$
\begin{align}\label{eq:v}
v(x)=\!\sup_{(\tau, \alpha)\in \mathcal{D}}\E_x\left[e^{-r \tau} \varphi(X_\tau) \I_{\{\alpha=0\}}\! +\! e^{-r (\tau+\vartheta) }   \left( p \psi(X_{\tau+\vartheta})\!+\! (1\!-\!p) v(X_{\tau+\vartheta})\right) \I_{\{\alpha=1\}} \right].
\end{align}
Here $\tau$ denotes the decision time and $\alpha$ indicates the project to be chosen (a precise definition of the set $\mathcal{D}$ of admissible controls is given in Section \ref{sec:setting}). Functions $\varphi$ and $\psi$ are real-valued, continuous, with $\psi\ge\varphi$, and represent the revenues associated to project $A$ (corresponding to $\alpha=0$) and to project $B$ (corresponding to $\alpha=1$), respectively. The random variable $\vartheta$ is the delay associated to the output of project $B$ (the riskier one) and $p \in [0,1]$ is the probability of a positive outcome for such project. Notice that at time $\tau+\vartheta$ the optimiser learns if project $B$ has succeeded and the optimization is over (in which case she receives $\psi(X_{\tau+\theta})$) or whether the project failed and the optimisation starts afresh.

In the paper we first show that the problem above is equivalent to a recursive problem of stopping only (see Lemma \ref{lem:value}). Using this fact we prove that \eqref{eq:v} admits a unique fixed point $v$ in a suitable Banach space of continuous functions. Furthermore there exists an optimal couple $(\tau_*,\alpha^*)$ and the value function $v$ fulfils suitable (super)martingale properties (see Theorem \ref{thm:prob1} and Lemma \ref{lem:value}). The set-up and methodology are sufficiently general to allow the analysis of problems with both finite-time and infinite-time horizon.

In order to investigate in more detail the structure of optimal strategies for this class of problems, in Section \ref{sec:appl} we consider an example where $X$ is a two-dimensional geometric Brownian motion and the functions $\varphi$ and $\psi$ are linear. As explained in more detail below, this example is motivated by the problem of selling a stock in two different trading venues: the lit market (where the sale is certain) and a dark pool (where the sale is uncertain but more profitable). In this case we show that the state space can be reduced to one dimension (Proposition \ref{prop:dim}) thus allowing for a more explicit description of the geometry of the stopping set (Theorem \ref{thm:final} and Corollary \ref{cor:final}). Further, we prove that the value function $v$ is continuously differentiable (in both state variables) so that {\em smooth-fit} holds at the boundary of the stopping set. The latter feature was never observed in this class of problems. Finally, we characterise the value function $v$ as the unique solution of a suitable free boundary problem (Proposition \ref{prop:freeb}).

We take advantage of the detailed analysis of Section \ref{sec:appl} to illustrate features captured by our model via a comparison with a corresponding trading problem without recursion (Section \ref{sec:comparison}). In line with the intuition, we observe that the ability to reiterate the decision process creates an incentive for the trader to opt for the riskier sale more frequently in the recursive model than in a non-recursive one. At the same time, the sale in the lit market is delayed in the recursive model, compared to the other one.

In summary, our contributions are the following: (i) we introduce a class of recursive optimal stopping problems, that appear to be new in the literature; these models describe a decision maker who can repeat her investment decisions in the event of negative outcomes; (ii) we prove existence of a solution to the problem in broad generality; (iii) we solve in detail the recursive problem in a two-factor model for trading in different venues; in this case, the geometry of the {\em selling} and the {\em waiting} regions can be determined and compared with the solution of a corresponding non-recursive problem.

\subsection{Connection with the existing literature}
To the best of our knowledge, the class of problems that we introduce and solve has never been addressed in the mathematical literature. Some loose links can be drawn to control problems featuring recursive utility, optimal multiple stopping problems, and some optimal stopping or impulse control problems with delay. However the settings are very different as briefly explained below.

The study of control problems with recursive (intertemporal) utilities was initiated with the work of \citet{epstein2013substitution} in a discrete time setting and was later extended to continuous time models (see, e.g.,  \citet{duffie1992stochastic}). Recursive preferences were introduced to model investors' impatience and found applications in problems of asset pricing and optimal portfolio/consumption decisions. It should be noted, however, that problems with recursive utility are conceptually different from our problem: in the former the recursive structure arises due to the preferences of the decision maker, whereas in the latter the recursion is purely induced by the (exogenous) probability of an unsuccessful transaction.

Problems of optimal multiple stopping were motivated by applications to swing options in the commodity market. A swing contract allows the buyer to exercise a sequence of $n$ American options with a fixed minimum time lag between two subsequent rights of exercise. A rigorous mathematical formulation was given in \citet{carmona2008optimal} and a more recent account of further developments in the area can be found in the introduction of \citet{de2017optimal}. In general, a swing contract with $n$ rights has a payoff that depends on the value of the same contract with $n-1$ rights. In this sense, there is a recursive formulation of the problem. However, the recursion is of a different type to the one that we introduce in \eqref{eq:v}, where the payoff depends directly on the value for the {\em same} problem. As an extreme case, a swing contract with infinitely many rights of exercise can be seen as a problem of optimal stopping whose payoff depends on the value function itself. We can formally reduce our problem \eqref{eq:v} to that setting by taking both indicator variables equal to one, $p=0$ and by fixing a deterministic $\vartheta>0$.

Our set-up may be reminiscent of some impulse control problems but, in contrast to that class of problems, here the decision maker cannot influence the state dynamics (i.e., there is no control/impulse acting on $X$). For example, \citet{bayraktar2007effects} study a particular class of impulse control problems with delay that they rewrite with a recursive formulation. In their work an impulse exerted at a (stopping) time $\tau$ shifts the state dynamic to a different point in the state space after a (deterministic) delay $\Delta$.
In \cite{bayraktar2007effects} authors consider a one dimensional controlled diffusion and restrict the class of admissible strategies to so-called $(s,S)$-policies (i.e., upon hitting a level $s$ the process is shifted downwards to a new level $S<s$, after the delay $\Delta$). These assumptions allow them to adopt direct solution methods, often used in stopping problems for one-dimensional diffusions and based on a characterisation of the value via excessive functions (see, e.g., \citet{dynkin1969markov} and more recently \citet{dayanik2003optimal}).
Inventory problems with delivery lags and pending orders can also be cast as impulse control problems with delay and such problems were studied, for example, by \citet{bar1995explicit}. They show that the value function is the unique viscosity solution of a quasi-variational inequality in which the obstacle condition features a non-local term that depends on the delivery lag. They also find an explicit solution in a particular example with $(s,S)$-policy. The techniques employed in \citep{bayraktar2007effects} cannot be generalised to our multi-dimensional setting. Moreover, since in our case the underlying state process $X$ is uncontrolled, its position at time $\tau+\vartheta$ cannot be determined by actions of the decision maker. Hence, ideas used in \cite{bayraktar2007effects} and \cite{bar1995explicit} concerning $(s,S)$-policies do not apply to our setting.

Optimal stopping with delay (but without recursive structure) have been used in economics to model irreversible investment problems with time lag (see, e.g., \citet{bar1996investment}). In the mathematical literature, for example, \citet{oksendal2005optimal} considers deterministic delay and \citet{lempa2012optimal} considers random delay. In those settings the stopper can only choose a single stopping time and it is shown in \cite{oksendal2005optimal} and \cite{lempa2012optimal} that the problem reduces to a standard optimal stopping one. That corresponds to $\varphi\equiv 0$ and $p=1$ in our problem formulation, so that $\alpha=1$ is necessarily optimal and the recursive term vanishes.

\subsection{Motivation and examples}\label{sec:examples}
The key point in our model, which departs from the existing literature, is that an unsuccessful outcome of a risky project does not result in a direct cost for the decision maker and neither does it preclude the opportunity to try again\footnote{Imagine a house buyer who puts an offer on a property: If the offer is below the ask price it might be rejected, but will not prevent the buyer from bidding again and it carries no direct costs. If, on the contrary, the house buyer has a single opportunity to bid, she would be more likely to bid at or above the ask price.}.
Of course there are indirect costs associated to each failed transaction: (i) the time `{\em wasted}' is penalised by discounting and (ii) the agent must stick with her decision between time $\tau$ and time $\tau+\vartheta$, hence potentially missing other investment opportunities in that period (see Remark \ref{rem:theta}).

In the sequel we present a few examples of application of our model. We should remark that while in such examples the decision maker optimises future expected cashflows, it is possible to envisage models in which $\varphi$ and $\psi$ represent utility functions.

\subsubsection*{Pricing of real options} Consider an investor who bids to secure a certain investment opportunity. According to the traditional literature on irreversible investment the net present value (conditional on $\cF_t$) of future discounted cashflows resulting from the project follows a stochastic dynamics $(X_t)_{t\ge 0}$. The investor must sustain a sunk cost $K$ at the time $\tau$ of entering the project and she then receives $X_\tau$. Traditionally (see, e.g., \citet{dixit1994}) this problem is cast as
\[
\sup_\tau\E_x\big[e^{-r\tau}(X_\tau-K)\big].
\]
We can imagine that the sunk cost $K$ results from the investor's bid to secure the project. Then, the investor could choose to bid high and secure the project with certainty or to bid low and hope that her bid beats any potential competitors. Without entering into the complex realm of stochastic games we can say that a high bid corresponds to a `large' sunk cost $K_1>0$. On the contrary, a low bid corresponds to a `small' sunk cost $K_2<K_1$ but it is also associated to a probability of success $p\in(0,1)$. If the attempt is unsuccessful, the investor will wait for the next viable opportunity, which we model with the recursive structure of the problem. That is, in \eqref{eq:v} we could consider $\varphi(x)=(x-K_1)$ and $\psi(x)=(x-K_2)$. In this context, the delay $\vartheta$ associated to the outcome of the low bid can be understood as the the time it takes to receive and scrutinise several bids from competing firms. For the large bid $K_1$ such process is unnecessary and the project is assigned more quickly.

\subsubsection*{R\&D decisions} The R\&D department of a large firm (e.g., mobile phone makers) is tasked with developing new technologies that would allow the firm to expand its market share. The R\&D can pick two directions of work: (i) they can attempt to develop an advanced and disruptive technology that potentially allows the company to get a large market share $M\in(0,1)$ or (ii) they can develop standard upgrades of their current technology which guarantee to the company a smaller market share $0<m<M<1$. The development of a more advanced technology is risky and the estimated probability of success is $p\in (0,1)$. Moreover, developing a technology takes time, which implies that the outcome of such an investment is uncertain and comes with a delay. The standard upgrades of the existing technology instead are routinely performed and with no delay.

The whole market generates revenues at a rate modelled by a Markov process $(X_t)_{t \ge 0}$. So, at any given time $t$, the whole market's expected future discounted revenues read
\[
f(X_t):=\E\Big[\int_t^\infty e^{-rs}X_s\ud s\Big|\cF_t\Big].
\]
If the R\&D opts for the safer project the firm receives $m f(X_\tau)$ with certainty. If instead they opt for the more disruptive project the firm receives $M f(X_{\tau+\vartheta})$ at a future time and with probability $p$. Clearly, this corresponds to $\varphi(x)=m f(x)$ and $\psi(x)= M f(x)$ in \eqref{eq:v}.

\subsubsection*{Trading with the dark pool} The third example deals with a stock trading problem in the standard exchange and a dark pool. For this problem we develop a detailed mathematical model and present its solution in Section \ref{sec:appl}. Hence, here we only provide a brief overview of the related literature, in order to contextualise the problem.

Dark pools are trading venues that offer investors an alternative to the standard exchange (i.e., the lit market). Trading rules may vary across various dark pools but they share some common features. In dark pools information about outstanding orders (for instance, prices and market depth\footnote{``volume posted in the limit order book and available for immediate execution'', see \citet{cartea2015algorithmic}.}) is not available until the trade has occurred (as opposed to the readily available information in the lit market). Therefore, each trader's interest remains hidden from the rest of the market unless the trade actually occurs. Moreover, orders that are {\em successful}, are executed at a more favourable price, compared to the traditional lit market (for instance, the midpoint between the best bid and the best ask in the lit market); see, e.g., \citet{mittal2008you} and \citet{degryse2009shedding} for further details on specific features of various dark pools.
Due to the fact that available liquidity is not displayed, the execution of orders is uncertain: buy and sell orders are crossed as they arrive into the system, resulting in a delayed information flow.  One of the main advantages of trading in dark pools is that price impact negligible. For this reason these venues were originally used by, e.g., institutional investors, who typically deal with large-volume trades.
Popularity of dark pools has rapidly increased and nowadays they represent a consistent percentage of the trading volume in the US equity market (more than 15\%, see, e.g. \citet{ganchev2010censored}, \citet{zhu2014dark}, \citet{buti2017dark}). Over time the characteristics of traders in dark pools have changed and it is not unusual to find small orders whose execution is used, for instance, to detect if larger orders have been posted or as a proxy for the pool's state of liquidity. The success of dark pools has motivated the recent interest from academics on several aspects, such as: the impact of information leakage, adverse effects on market quality and optimally distribution of large orders in different trading venues.

Several papers consider the problem of a trader who can invest in the standard exchange and in a dark pool. This literature is mainly related to problems of optimal liquidation and aims to study features like the effect of liquidity and the impact on prices of the optimal liquidation strategy (see, e.g.~\citet{kratz2015portfolio}, \citet{kratz2018optimal}, \citet{crisafi2016simultaneous} and references therein). An in-depth analysis of trading mechanisms in dark pools falls outside the scopes of our paper. Instead, we suggest a simple model that draws on the class of recursive stopping problems studied in this work, with the aim to inform future more focussed applications.
In particular, in our example an investor holds a certain number of shares of a stock and wants to find the best time to sell the whole inventory with a single trade.
In line with existing literature (see, e.g., \citet{kratz2015portfolio}, \citet{cartea2015algorithmic} and \citet{boni2013dark}) we model orders in the dark pool as {\em complete-or-zero-execution} so that unexecuted (or {\em resting}) orders are held in the system until a matching order arrives or until cancellation\footnote{Some types of dark pools accept {\em immediate-or-cancel} orders, which corresponds to the case of $\vartheta=0$ in \eqref{eq:v}.} (e.g., at the end of the day, or end of the hour).
We consider a stochastic two-factor model $(S,K)$ where $S$ is the stock price in the lit market and $S+K$ is the price in the dark pool. The assumption of a two-factor model, with  stochastic spread $K$, is in line with the recent literature (see, e.g., \citet{crisafi2016simultaneous}) and it is closer to reality than one-factor models with deterministic or constant spread (recall that in fact the price in the dark pool is often the mid price between bid and ask as quoted on the standard exchange).

\subsection{Structure of the paper}
The paper is organised as follows. In Section \ref{sec:setting} we give the main modelling assumptions, we introduce the recursive optimal control/stopping problem and we establish its equivalence to a recursive problem of optimal stopping only. In Section \ref{sec:value} we prove that the stopping problem is well-posed and that an optimal stopping time exists. The application to trading in the dark pool, using a two-dimensional geometric Brownian motion, is illustrated in Section \ref{sec:appl}. In particular, the optimal trading boundaries and regularity properties of the value function are given in Section \ref{sec:boundaries}. Finally, a short technical Appendix concludes the paper.

\section{Modelling framework and problem formulation}\label{sec:setting}
We fix a probability space $(\Omega, \cF, \P)$ endowed with a right-continuous and complete filtration $\bF=(\cF_t)_{t \geq 0}$ with $\cF_\infty=\bigvee_{t \geq 0} \cF_t=:\cF$. Let  $X=(X_t)_{t \geq 0}$  be a right-continuous, strong Markov process, taking values in $\R^d$, that can be realised as a stochastic flow $(t,x) \mapsto X^x_t$, so that $X^x_0=x$, $\P$-a.s. We also assume that $X$ is quasi left-continuous, i.e., left continuous over stopping times. We denote $\P_x(\,\cdot\,):=\P(\,\cdot\,|X_0=x)$ and, for any integrable functional $f$ on the space of right-continuous paths in $\R^d$, we denote $\E_x[f(X_\cdot)]=\E[f(X^x_\cdot)]=\E[f(X_\cdot)|X_0=x]$. Moreover, thanks to strong Markov property we can also write $\E_{X_{\tau}}[f(X_\cdot)]=\E[f(X_{\tau+\cdot})|\cF_{\tau}]$ for any $\bF$-stopping time $\tau$. Finally, we let $\vartheta$ be a non-negative random variable, independent of $X$, with cumulative distribution $F(\cdot)$.

In what follows we consider a constant discount factor $r>0$, a parameter $p \in (0,1)$ and functions $\varphi:\R^d \to \R_+$ and $ \psi: \R^d \to \R_+$ with $\varphi\leq \psi$ on $\R^d$. We denote by $\cT$ the set of $\bF$-stopping times and define the set of admissible control/stopping pairs as
\begin{align*}
\D=\{(\tau, \alpha): \ \tau \in \cT, \ \alpha\in \{0,1\}, \ \alpha \mbox{ is } \cF_\tau\mbox{-measurable}\}.
\end{align*}

\begin{remark}\label{rem:finite_time_horizon}
The set $\cT$ may be either bounded or unbounded, in order to accommodate both finite-time and infinite-time horizon problems. If $\cT$ is bounded we will assume that the first coordinate of the $d$-dimensional process $X$ is `time', i.e., $X_t=(t,X^1_t,\ldots X^{d-1}_t)$. All the results presented in Sections \ref{sec:setting} and \ref{sec:value} hold for both the finite-time and infinite-time horizon problem. Only the proof of Lemma \ref{lem:usc-G} requires a small tweak, which is discussed in detail.
\end{remark}

\begin{remark}
Since we have no restrictions on the probability distribution of the delay, $\vartheta$, our results also include the case of bounded delay (that is, $F(\ud t)$ has compact support) and deterministic delay (that is, $F(\ud t)=\delta_{t_0}(\ud t)$ for some $t_0>0$).
\end{remark}

Let $|\, \cdot \,|_d$ denote the Euclidean norm in $\R^d$, let $x_i$ be the $i$-th coordinate of $x\in\R^d$ and if $\cT$ is unbounded we adopt the convention that
\[
f(X^x_\tau)\I_{\{\tau=\infty\}}=\limsup_{t\to \infty}f(X^x_t),\qquad\text{$\P$-a.s.},
\]
for any Borel-measurable function $f:\R^d\to\R$, each $x\in\R^d$ and any $\tau\in\cT$;  such convention is clearly unnecessary in the case of finite-time horizon.

\subsection{Infinite horizon problem}\label{sec:infinite}
In order to set out the notation, it is convenient to first introduce the problem with infinite-time horizon. The one with finite-time horizon will be presented in Section \ref{sec:finite} and only requires small modifications.

Our objective is to solve the following problem.
\vspace{+8pt}

\noindent{\bf Problem 1.} Find a continuous function $v:\R^d\to \R_+$  that satisfies
\begin{align}\label{eq:value}
v(x)=\!\sup_{(\tau, \alpha)\in \D}\!\E_x\!\left[e^{-r \tau} \varphi(X_\tau) \I_{\{\alpha=0\}} \!+\! e^{-r (\tau+\vartheta) }   \left( p \psi(X_{\tau+\vartheta})\!+\! (1\!-\!p) v(X_{\tau+\vartheta})\right) \I_{\{\alpha=1\}} \right].
\end{align}
\vspace{+8pt}

The optimisation problem in \eqref{eq:value} describes situations in which an agent `stops' at time $\tau$ and chooses between an immediate payoff $\varphi(X)$, if $\alpha=0$, or a larger payoff $\psi(X)$, if $\alpha=1$, which will only be attained with probability $p\in(0,1)$ at a future random time $\tau+\vartheta$. If the agent opts for $\psi(X)$ and the payoff is not attained (which occurs with probability $1-p$) then the optimisation must start afresh (at the time $\tau+\vartheta$ when the outcome is revealed).

It is intuitively clear that in choosing her strategy the agent will need to keep track of multiple sources of uncertainty. As usual there is an underlying stochastic dynamic $X$ and a discount factor that penalises waiting. Additionally to that, one must account for the relative convenience of $\psi$ compared to $\varphi$, which needs to be `weighted' with the risk of an unsuccessful transaction and the random waiting time after the decision to stop.

\begin{remark}[The delay $\vartheta$]\label{rem:theta}
If at time $\tau$ the agent commits to $\alpha=1$, she must stick with her decision until time $\tau+\vartheta$. This is in line for example with the situation of a research team that submits a grant proposal: until the funder makes a decision (which may happen within a period of time that is more or less known) the team would not withdraw the proposal nor submit to another funder. Other examples of (irreversible) investment with time-lag (but no recursive structure) can be found for instance in \cite{bar1996investment}.

Our model also allows the agent to set a maximum (deterministic) waiting time $t_0$. Indeed we can take $\vartheta=t_0\wedge \gamma$ for some random variable $\gamma\ge 0$ independent of $X$ (i.e., we can interpret $\vartheta$ as the smallest between $t_0$ and the time $\gamma$ at which the outcome of the risky project is actually revealed).
As explained in the Introduction, such specification is natural for our application to trading: unexecuted orders in the dark pool are held in the system until a matching order arrives (i.e., until $\gamma$) or until cancellation (i.e., until $t_0$); see, e.g., \cite{kratz2015portfolio}, \cite{cartea2015algorithmic} and \cite{boni2013dark}.
\end{remark}

\begin{remark}[Extensions and standard optimal stopping]
\begin{itemize}
\item[ ]
\item[(a)] The problem formulation above may be extended to accommodate specific applied situations. While it is difficult to account concisely for all such possible extensions, we note that in \eqref{eq:value} one could add a fixed cost $c>0$ that further penalises the negative outcome in case $\alpha=1$, by taking $(1-p)( v(X^x_{\tau+\vartheta})-c)$. This tweak does not affect the analysis and the results in the rest of the paper and we set $c=0$ for simplicity.
\item[(b)]
If we take  $p=0$ and $\P(\vartheta=0)=1$ we reduce to a {\em classical optimal stopping problem} with gain function $\varphi$. Then equation \eqref{eq:value} can be interpreted as a version of the dynamic programming principle, where at each stopping time $\tau$ the optimiser can decide whether to stop ($\alpha=0$) or to continue ($\alpha=1$).
\end{itemize}
\end{remark}

We now prove that Problem 1 has an alternative characterisation in terms of a problem of optimal stopping only. To this end we introduce the following optimisation problem.
\vspace{+8pt}

{\bf Problem 2.} Find a continuous function $\tilde v:\R^d\to \R_+$  that satisfies
\begin{align}\label{eq:vtilde}
\tilde{v}(x)=\sup_{\tau\in \cT}\E_x\left[e^{-r \tau} \max\left\{\varphi(X_\tau),(\Lambda\tilde{v})(X_\tau) \right\}\right],
\end{align}
where for any continuous function $f:\R^d\to \R_+$ we define
\begin{align}\label{eq:Lambda}
(\Lambda f)(x):=\int_0^\infty e^{-rt}\E_x\left[p\psi(X_t)+(1-p)f(X_t)\right]F(\ud t).
\end{align}
\vspace{+8pt}

\begin{lemma}\label{lem:value}
A continuous function $v:\R^d\to \R_+$ is a solution of {\bf Problem 1} if and only if it solves {\bf Problem 2}. Moreover, if {\bf Problem 2} has a solution and admits an optimal stopping time $\tau_*$, then the couple $(\tau_*,\alpha^*)$, with $\alpha^*:=\I_{\{(\Lambda v)(X_{\tau_*})>\varphi(X_{\tau_*})\}}$, is optimal for {\bf Problem 1}.
\end{lemma}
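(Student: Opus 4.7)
The plan is to reduce \eqref{eq:value} to \eqref{eq:vtilde} by first integrating out the random delay $\vartheta$ and the indicator $\alpha$, and then performing the pointwise optimisation over $\alpha$ at the (random) time $\tau$. Fix $(\tau,\alpha)\in\D$ and a continuous $v\ge 0$. The first step is to show that
\begin{align}
\E_x\bigl[e^{-r(\tau+\vartheta)}\bigl(p\psi(X_{\tau+\vartheta})+(1-p)v(X_{\tau+\vartheta})\bigr)\I_{\{\alpha=1\}}\bigr]
=\E_x\bigl[e^{-r\tau}(\Lambda v)(X_\tau)\I_{\{\alpha=1\}}\bigr].
\end{align}
This follows by conditioning on $\cF_\tau$ and using three ingredients: that $\alpha$ is $\cF_\tau$-measurable, that $\vartheta$ is independent of $X$ (and hence of $\cF_\tau$, after the customary enlargement of the filtration), and the strong Markov property of $X$ at $\tau$. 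Concretely, conditioning on $\cF_\tau$ and integrating $\vartheta$ against $F(\ud t)$ produces the integral in \eqref{eq:Lambda} evaluated at $X_\tau$.

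With this identity the Problem 1 objective, for a given pair $(\tau,\alpha)$, reduces to
\begin{align}
J(\tau,\alpha;x)=\E_x\bigl[e^{-r\tau}\bigl(\varphi(X_\tau)\I_{\{\alpha=0\}}+(\Lambda v)(X_\tau)\I_{\{\alpha=1\}}\bigr)\bigr].
\end{align}
Since $\alpha$ is $\cF_\tau$-measurable and may be chosen arbitrarily from $\{0,1\}$, the supremum over $\alpha$ (with $\tau$ fixed) is attained by the pointwise rule $\alpha^*(\tau):=\I_{\{(\Lambda v)(X_\tau)>\varphi(X_\tau)\}}$, yielding
\begin{align}
\sup_{\alpha}J(\tau,\alpha;x)=\E_x\bigl[e^{-r\tau}\max\{\varphi(X_\tau),(\Lambda v)(X_\tau)\}\bigr].
\end{align}
Taking the supremum over $\tau\in\cT$ gives exactly \eqref{eq:vtilde} with $\tilde v$ replaced by $v$, so any solution of Problem 1 solves Problem 2. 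The converse implication uses the same chain of equalities read in the opposite direction: given a continuous $\tilde v\ge 0$ solving \eqref{eq:vtilde}, for every $\tau$ the pair $(\tau,\alpha^*(\tau))$ belongs to $\D$ (its $\cF_\tau$-measurability follows from continuity of $\varphi$ and $\Lambda\tilde v$) and achieves the inner max, while every other $(\tau,\alpha)$ is dominated by it; hence $\tilde v$ solves \eqref{eq:value}.

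The last statement is a direct consequence of the computation above: if $\tau_*$ is optimal for Problem 2 associated with the common solution $v$, then
\begin{align}
v(x)=\E_x\bigl[e^{-r\tau_*}\max\{\varphi(X_{\tau_*}),(\Lambda v)(X_{\tau_*})\}\bigr]=J(\tau_*,\alpha^*;x),
\end{align}
with $\alpha^*=\I_{\{(\Lambda v)(X_{\tau_*})>\varphi(X_{\tau_*})\}}$, and since $J(\tau,\alpha;x)\le v(x)$ for every admissible pair, optimality of $(\tau_*,\alpha^*)$ in Problem 1 follows.

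The proof is essentially bookkeeping; the one delicate point is the independence argument used to pull out $\vartheta$. Because the paper assumes $\vartheta$ independent of $X$ but $\tau$ is only required to be an $\bF$-stopping time, one has to be explicit that the filtration is (or may be taken to be) rich enough for $\vartheta$ to be independent of $\cF_\tau$; this is the only place where some care is needed, and it is standard. A minor accompanying remark is that measurability of $\alpha^*$ and of $(\Lambda v)(X_{\tau_*})$ is automatic since $\Lambda v$ is continuous (by dominated convergence using continuity of $\psi$, $v$ and right-continuity of $X$) and $X_{\tau_*}$ is $\cF_{\tau_*}$-measurable.
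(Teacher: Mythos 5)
Your argument is correct and follows essentially the same route as the paper: integrate out $\vartheta$ by conditioning on $\cF_\tau$ (independence, Fubini, strong Markov property) to replace the delayed term by $e^{-r\tau}(\Lambda v)(X_\tau)\I_{\{\alpha=1\}}$, then optimise pointwise over the $\cF_\tau$-measurable $\alpha$ to obtain the $\max\{\varphi,\Lambda v\}$ payoff, with the converse and the optimality of $(\tau_*,\alpha^*)$ obtained exactly as in the paper. Your remark on the independence of $\vartheta$ from $\cF_\tau$ is a fair point of care but does not change the substance of the proof.
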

\begin{proof}
Assume $v$ is a solution of {\bf Problem 1}. From \eqref{eq:value}, using independence of $\vartheta$ and $X$ we obtain
\begin{align}
\label{eq:last}
v(x)& = \sup_{(\tau, \alpha)\in \D}\E \bigg[e^{-r \tau} \varphi(X^x_\tau) \I_{\{\alpha=0\}}\! \\
&\qquad\qquad\qquad+\!\! \int_0^{\infty}\!\!\!\!e^{-r (\tau+t) }   \left( p \psi(X^x_{\tau+t})\!+\! (1-p) v(X^x_{\tau+t})\right) F(\ud t) \I_{\{\alpha=1\}} \bigg].
\end{align}
Since $\alpha$ is $\cF_\tau$-measurable, using Fubini's theorem, the strong Markov property of $X$ and \eqref{eq:Lambda} we get
\begin{align}\label{eq:tower}
&\E \left[\int_0^{\infty}e^{-r (\tau+t) } \left( p \psi(X^x_{\tau+t})+ (1-p) v(X^x_{\tau+t})\right) F(\ud t)\I_{\{\alpha=1\}}\Big|\cF_{\tau}\right]\\
&=\int_0^{\infty}e^{-r (\tau+t) }  \E\left[ \left( p \psi(X^x_{\tau+t})+ (1-p) v(X^x_{\tau+t})\right) |\cF_\tau\right] F(\ud t)\I_{\{\alpha=1\}}\\
&= \int_0^{\infty}e^{-r (\tau+t) }  \E_{X^x_\tau}\left[ \left( p \psi\big(X_{t}\big)+ (1-p) v\big(X_t\big)\right)  \right] F(\ud t) \I_{\{\alpha=1\}}= e^{-r\tau} (\Lambda v) (X^x_\tau)\I_{\{\alpha=1\}}.
\end{align}

Now, we can use the tower property of conditional expectation and \eqref{eq:tower}, in the right-hand side of \eqref{eq:last}, in order to obtain
\begin{align}\label{eq:ref}
v(x)&= \sup_{(\tau, \alpha)\in \D}\E\left[e^{-r \tau} \varphi(X^x_\tau) \I_{\{\alpha=0\}} +  e^{-r\tau} (\Lambda v) (X^x_\tau) \I_{\{\alpha=1\}}  \right]\\
&\le  \sup_{\tau \in \cT}\E\left[e^{-r \tau} \max\left\{ \varphi(X^x_\tau),  (\Lambda v) (X^x_\tau) \right\} \right].
\end{align}
Equality in \eqref{eq:ref} is obtained by choosing the Markovian control $\alpha(x)=\I_{\{(\Lambda v)(x)>\varphi(x)\}}$. Optimality of the couple $(\tau_*,\alpha^*)$ then follows as well.

Finally, for the {\em only if} part of the statement, we can reverse the argument above and show that a solution $\tilde v$ of {\bf Problem 2} must satisfy \eqref{eq:last}.
\end{proof}

Lemma \ref{lem:value} allows us to use equivalently the problem formulation given in either \eqref{eq:value} or \eqref{eq:vtilde}. In the rest of the paper we will mainly focus on the study of \eqref{eq:vtilde} and we set $\tilde v=v$ throughout.

Next we introduce the set
\begin{align}\label{eq:A}
\cA_d:=\left\{f: f\in C(\R^d;\R_+),\:\text{such that}\:\|f\|_{\cA_d}<\infty\right\},
\end{align}
where
\begin{align}\label{eq:norm}
\|f\|^2_{\cA_d}:=\sup_{x\in\R^d}\frac{|f(x)|^2}{1+|x|_d^2}.
\end{align}
It is not difficult to see that $(\cA_d, \| \cdot \|_{\cA_d})$ is a Banach space (the proof of this fact is given in Appendix for completeness).

\begin{remark}
The case in which $X$ is a two-dimensional geometric Brownian motion will be considered in Section \ref{sec:appl}. In that setting the process is bound to evolve in $\R^2_+$ and we will consider the space $\cA^+_2$ defined as in \eqref{eq:A} but with $\R^2_+$ in place of $\R^d$.
\end{remark}

Next we give standing assumptions on the process $X$ and on the payoff functions.

\begin{assumption}\label{ass:sub}
The stochastic flow $x\mapsto X^x_t$ is $\P$-a.s.~continuous for all $t\ge 0$. Moreover,
\begin{itemize}
\item[(i)]  there exists $\rho\in (0,1)$ such that the process $(\widehat X_t)_{t\geq 0}$ defined by
\[
\widehat X_t:= e^{-2r(1-\rho)t} (1+|X_t|_d^2),\quad\text{for $t \geq 0$},
\]
is a $\P_x$-supermartingale for any $x\in\R^d$;
\item[(ii)] for any compact $K\subset\R^d$ we have
\begin{align}
 \sup_{x\in K}\E_x\left[\sup_{t \geq 0} e^{-r t} |X_t|_d\right]<\infty;
\end{align}
\item[(iii)] for any $x\in \R^d$ and  any sequence $(x_n)_{n \geq 0}$ converging to $x$, it holds
\begin{align}
\lim_{n \to \infty}\E\left[\sup_{t \geq 0} e^{-rt} |X^{x_n}_t-X^x_t|_d\right]=0; \label{ass:conv}
\end{align}
\item[(iv)] functions $\varphi$ and $\psi$ belong to $\cA_d$ (with $\varphi\le \psi$).
\end{itemize}
\end{assumption}

Continuity of the flow $x\mapsto X^x$ can be relaxed (see Remark \ref{rem:flow}) but it is convenient for a clear exposition. Also, sufficient conditions for the existence of a continuous modification of a random field are provided by the well-known Kolmogorov's continuity theorem (see, e.g., \cite[Thm.~2.8, Ch.~2]{KS}). Notice that if $X$ is a solution to a stochastic differential equation whose coefficients have sublinear growth, we can always find a constant $r>0$ sufficiently large to guarantee that (i) and (ii) in Assumption \ref{ass:sub} hold.  If moreover the coefficients are Lipschitz continuous, then (iii) also holds for suitable $r>0$ (these claims can be verified adapting the proofs of \cite[Thm.~9 and Cor.~10, Ch.~2, Sec.~5]{Krylov} to $e^{-rt}X_t$).

In Section \ref{sec:value}, we will often use that since the process $\widehat X$ in (i) of Assumption \ref{ass:sub} is a non-negative supermartingale, then it is a supermartingale for $t\in[0,\infty]$ and the optional sampling theorem gives
\begin{align}\label{eq:Optsam}
\E_x\big[\widehat X_{\tau}\big]\le (1+|x|^2_d), \quad\text{for any $\tau\in\cT$ and $x\in\R^d$}
\end{align}
(see, e.g., \cite[Prob.~3.16 and Thm.~3.22, Ch.~1, Sec.~3]{KS2}). For future reference we also notice that, for any $f\in\cA_d$, using (i) we have
\begin{align}\label{eq:unifint}
\sup_{x\in K}\E_x[e^{-2rt}f^2(X_t)]<\infty,\quad\text{for any compact $K\subset\R^d$ and any $t\ge 0$}.
\end{align}
Hence (see, e.g., \cite[Lem.~3, Ch.~2, Sec.~6]{shiryaev1988probability})
\begin{align}\label{eq:ui0}
\text{the family $\{e^{-rt}f(X^x_t),\:\:x\in K\}$ is uniformly integrable for any $t\ge 0$}.
\end{align}
Moreover, by continuity of the flow $x\mapsto X^x_t$ and of $f$, for any sequence $(x_n)_{n\ge 1}$ converging to $x\in\R^d$ and any $t\ge 0$ we have
\begin{align}\label{eq:ui2}
\lim_{n\to\infty}|f(X^x_t)-f(X^{x_n}_t)|=0,\qquad \P-a.s.
\end{align}
Then by \cite[Thm.~4, Ch.~2, Sec.~6]{shiryaev1988probability}, using \eqref{eq:ui0} we also have
\begin{align}\label{eq:ui3}
\lim_{n\to\infty}\E\left[e^{-rt}\left|f(X^x_t)-f(X^{x_n}_t)\right|\right]=0.
\end{align}

\subsection{Finite horizon problem}\label{sec:finite}

Here we formulate the finite-time horizon versions of {\bf Problem 1} and {\bf Problem 2}. We fix $T<\infty$ and for $z:=(t,x)\in[0,T]\times\R^{d-1}$ we denote $X_s=(t+s,X^1_s,\ldots X^{d-1}_s)$ for $s\in[0,T-t]$, under the measure $\P_{z}$ (or equivalently we denote the associated flow by $X^z$ with $X^z_0=z$). Further, we say that $\tau\in\cT$ if it is a stopping time and $\tau\in[0,T-t]$, $\P_z$-a.s.

Then, the analogue of {\bf Problem 1} in this setting reads:
\vspace{+4pt}

\noindent{\bf Problem 1'}. Find a continuous function $v:[0,T]\times\R^{d-1}\to \R_+$  that satisfies
\begin{align}\label{eq:value-T1}
v(z)\!=\!\sup_{(\tau, \alpha)\in \D}\!\E_z\!\left[e^{-r \tau} \varphi(X_\tau) \I_{\{\alpha=0\}} \!+\! e^{-r (\tau+\vartheta) } \!  \left( p \psi(X_{\tau+\vartheta})\!+\! (1\!-\!p) v(X_{\tau+\vartheta})\right)\I_{\{\tau+\vartheta\le T-t\}} \I_{\{\alpha=1\}} \right].
\end{align}
\vspace{+4pt}

Similarly, the analogue of {\bf Problem 2} reads:
\vspace{+4pt}

\noindent{\bf Problem 2'}. Find a continuous function $\tilde v:[0,T]\times\R^{d-1}\to \R_+$  that satisfies
\begin{align}\label{eq:value-T2}
\tilde v(z)\!=\!\sup_{\tau\in \cT}\E_z\left[e^{-r \tau} \max\left\{\varphi(X_\tau),(\Lambda \tilde v)(X_\tau) \right\}\right],
\end{align}
where for any continuous function $f:[0,T]\times\R^{d-1}\to \R_+$ we now define
\begin{align}\label{eq:Lambda-T}
(\Lambda f)(z):=\int_0^{T-t} e^{-rt}\E_z\left[p\psi(X_s)+(1-p)f(X_s)\right]F(\ud s).
\end{align}
\vspace{+4pt}

Notice that the integral in \eqref{eq:Lambda-T} is only up to $T-t$, due to the presence of the indicator of the event $\{\tau+\vartheta\le T-t\}$ in the formulation of {\bf Problem 1'}. Lemma \ref{lem:value} continues to hold for {\bf Problem 1'} and {\bf Problem 2'}, with the same proof. Hence we set $v=\tilde v$ throughout the paper.

In this framework Assumption \ref{ass:sub} and the subsequent discussion are understood to hold for $t\in[0,T]$. Moreover, condition (i) in that assumption only applies to the `spatial' part of the process, i.e., to the vector $(X^1,\ldots X^{d-1})$. Functions $f\in\cA_d$ are continuous from $[0,T]\times\R^{d-1}$ to $\R_+$ and the norm $\|\cdot\|_{\cA_d}$ is understood as
\begin{align*}
\|f\|^2_{\cA_d}:=\sup_{(t,x)\in[0,T]\times\R^{d-1}}\frac{|f(t,x)|^2}{1+|x|_{d-1}^2}.
\end{align*}

We will need $\Lambda f\in\! C([0,T]\times\R^{d-1})$, according to Lemma \ref{lem:Lambda} below. Then, for the finite-time horizon set-up it is convenient to make the next assumption.
\begin{assumption}\label{ass:finite-T}
In the finite-time horizon problem we have $F$ continuous on $[0,T]$ with $F(0)$ possibly strictly positive (i.e., the law of $\vartheta$ may have an atom at zero).
\end{assumption}

To conclude, notice that for $z=(T,x)$ we have $\tau=0$ and $\alpha$ is $\cF_0$-measurable. Then it is easy to see that
\begin{align}\label{eq:v(z)}
v(z)=&\sup_{\alpha\in\{0,1\}}\big[\varphi(z)\P_z(\alpha=0)+
\left(p\psi(z)+(1-p)v(z)\right)F(0)\P_z(\alpha=1)\big]\\
=&\max\{\varphi(z),\left(p\psi(z)+(1-p)v(z)\right)F(0)\}.\notag
\end{align}
If $F(0)=0$ we have $v(z)=\varphi(z)$. If instead $F(0)>0$, we have
\[
v(z)=\left(p\psi(z)+(1-p)v(z)\right)F(0)\quad\text{if and only if}\quad v(z)=\frac{pF(0)\psi(z)}{1-(1-p)F(0)}.
\]
Then, substituting back into \eqref{eq:v(z)} we get
\begin{align}\label{eq:vT}
v(T,x)=\max\left\{\varphi(T,x),\psi(T,x)\frac{pF(0)}{1-(1-p)F(0)}\right\}.
\end{align}

\section{Existence of a value}\label{sec:value}
In this section we prove that {\bf Problem 2} and {\bf Problem 2'} (hence {\bf Problem 1} and {\bf Problem 1'}) are well-posed. That is, the value function $v$ is uniquely determined as a fixed point in $\cA_d$ and an optimal pair $(\tau_*,\alpha^*)$ exists, thanks to Lemma \ref{lem:value}. In order to avoid repetitions, we present most of our analysis using the notation of the infinite-time horizon setting (Section \ref{sec:infinite}) but all the results hold with finite-time horizon and details are provided in all proofs as necessary.

Let us start by introducing the operator $\Gamma$ given by
\begin{align}\label{eq:gamma}
(\Gamma f)(x):=\sup_{\tau\in\cT}\E_x\left[e^{-r \tau} \max\left\{\varphi(X_\tau),(\Lambda f)(X_\tau) \right\}\right]
\end{align}
for every continuous function $f : \R^d \to \R_+$, where $\Lambda$ is defined in \eqref{eq:Lambda}. Equation \eqref{eq:gamma} defines an optimal stopping problem for each $f\in C(\R^d;\R_+)$.

Our goal is to prove Theorem \ref{thm:prob1} below. If the time horizon is $T<\infty$, we understand all the results to hold for $t\in[0,T]$ but we omit further notation for simplicity (see Section \ref{sec:finite}).
\begin{theorem}\label{thm:prob1}
Problem 2 (Problem 2') admits a unique solution $v\in\cA_d$. Moreover, the stopping time
\begin{align}\label{eq:tau*}
\tau_*=\inf\big\{s \geq 0: v(X_s)=\max\left[\varphi(X_s),(\Lambda v)(X_s) \right]\big\}
\end{align}
is optimal for \eqref{eq:vtilde}, the process
\[
t\mapsto e^{-rt}v(X_t),\qquad t\in[0,\infty]
\]
is a right-continuous (non-negative) $\P_x$-supermartingale and the process
\[
t\mapsto e^{-r(t\wedge\tau_*)}v(X_{t\wedge\tau_*}),\qquad t\in[0,\infty)
\]
is a right-continuous (non-negative) $\P_x$-martingale, for any $x\in\R^d$.
\end{theorem}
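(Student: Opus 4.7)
The plan is to prove the theorem by Banach's fixed-point theorem applied to the operator $\Gamma$ defined in \eqref{eq:gamma} on the Banach space $\cA_d$, and then to extract the existence of $\tau_*$ and the (super)martingale properties from standard Snell envelope theory for the stopping problem with payoff $G(x):=\max\{\varphi(x),(\Lambda v)(x)\}$.

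First, I would check that $\Gamma$ maps $\cA_d$ into itself. For $f\in\cA_d$, the definition \eqref{eq:Lambda} together with Assumption \ref{ass:sub}(i) and Jensen's inequality gives $\Lambda f\in\cA_d$, and continuity of $\Lambda f$ follows from continuity of the flow combined with the uniform integrability already encoded in \eqref{eq:ui0}--\eqref{eq:ui3} (this is precisely what Lemma \ref{lem:Lambda} asserts). Then $G_f:=\max(\varphi,\Lambda f)$ is continuous and of linear growth, and a standard bound using $\widehat X$ shows $\E_x[e^{-r\tau}G_f(X_\tau)]\le c(1+|x|_d)$ uniformly in $\tau\in\cT$, so $\Gamma f\in\cA_d$.

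Next, the main computation is the contraction estimate. For $f,g\in\cA_d$ one has
\begin{align}
|(\Lambda f)(y)-(\Lambda g)(y)|\le (1-p)\|f-g\|_{\cA_d}\int_0^\infty e^{-rt}\E_y\bigl[\sqrt{1+|X_t|_d^2}\,\bigr]\,F(\ud t),
\end{align}
and by Jensen combined with Assumption \ref{ass:sub}(i) we have $\E_y[\sqrt{1+|X_t|_d^2}]\le e^{r(1-\rho)t}\sqrt{1+|y|_d^2}$, so $|(\Lambda f-\Lambda g)(y)|\le (1-p)C\|f-g\|_{\cA_d}\sqrt{1+|y|_d^2}$ with $C:=\int_0^\infty e^{-r\rho t}F(\ud t)\le 1$. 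Using $|\max(a,b_1)-\max(a,b_2)|\le |b_1-b_2|$ inside the supremum in \eqref{eq:gamma}, Cauchy--Schwarz and \eqref{eq:Optsam} yield
\begin{align}
\frac{|(\Gamma f)(x)-(\Gamma g)(x)|}{\sqrt{1+|x|_d^2}}\le (1-p)C\,\|f-g\|_{\cA_d}\cdot\sup_{\tau\in\cT}\frac{\sqrt{\E_x[e^{-2r\tau}(1+|X_\tau|_d^2)]}}{\sqrt{1+|x|_d^2}}\le (1-p)C\,\|f-g\|_{\cA_d}.
\end{align}
Since $p\in(0,1)$ and $C\le 1$, the constant $(1-p)C<1$, so $\Gamma$ is a strict contraction and admits a unique fixed point $v\in\cA_d$; this proves existence and uniqueness for Problem 2. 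In the finite-horizon setting the same computation works verbatim on $[0,T]\times\R^{d-1}$ with $\int_0^{T-t}$ replacing $\int_0^\infty$ in \eqref{eq:Lambda-T}, and Assumption \ref{ass:finite-T} guarantees the needed continuity of $\Lambda f$.

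Having $v=\Gamma v$, the second half of the statement is a standard Snell envelope argument. Set $G:=\max(\varphi,\Lambda v)$, so $v(x)=\sup_{\tau}\E_x[e^{-r\tau}G(X_\tau)]$ with $G$ continuous and of linear growth. The class bound and Assumption \ref{ass:sub}(ii) give $\E_x[\sup_{t}e^{-rt}G(X_t)]<\infty$, so the Snell envelope $Y_t:=\operatorname*{ess\,sup}_{\tau\ge t}\E[e^{-r\tau}G(X_\tau)\mid\cF_t]$ is the smallest right-continuous supermartingale majorising $e^{-rt}G(X_t)$; by the strong Markov property and continuity of $v$ one identifies $Y_t=e^{-rt}v(X_t)$, which gives the supermartingale property on $[0,\infty]$. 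Upper semicontinuity of $(t,x)\mapsto e^{-rt}G(x)$ along the paths of $X$ (which is where Lemma \ref{lem:usc-G} is invoked, with its small finite-horizon tweak) guarantees that the first hitting time $\tau_*$ in \eqref{eq:tau*} is optimal and that the stopped process $e^{-r(t\wedge\tau_*)}v(X_{t\wedge\tau_*})$ is a martingale. The optimality of the pair $(\tau_*,\alpha^*)$ for Problem 1 then follows directly from Lemma \ref{lem:value}.

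The main obstacle I expect is bookkeeping around the contraction: one must verify carefully that the supermartingale property of $\widehat X$ survives under $\sqrt{\cdot}$ and Cauchy--Schwarz, that the constant $(1-p)C$ is strictly less than $1$ in the relevant cases (in particular when $F=\delta_0$ one must still exploit $p>0$), and that continuity of $\Lambda f$ propagates through the supremum in \eqref{eq:gamma}; the technical lemmas on $\Lambda$ and on upper semicontinuity of $G$ do the heavy lifting there, and the rest is a packaging exercise.
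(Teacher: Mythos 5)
Your proposal is correct and follows essentially the same route as the paper: show $\Lambda$ maps $\cA_d$ into itself, show $\Gamma$ is a strict contraction with modulus at most $(1-p)$ (the paper runs the estimate through the optimal stopping time $\tau^f_*$ where you use the elementary bound $\sup_\tau A_\tau-\sup_\tau B_\tau\le\sup_\tau|A_\tau-B_\tau|$, and it settles for the constant $(1-p)$ rather than your $(1-p)C$), and deduce optimality of $\tau_*$ and the (super)martingale properties from standard optimal stopping theory applied to the gain $\max\{\varphi,\Lambda v\}$ (Lemma \ref{lem:OS} in the paper, via Peskir--Shiryaev; your Snell-envelope phrasing is equivalent). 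One small correction: Lemma \ref{lem:usc-G} gives upper semicontinuity of the value $\Gamma f$, not of the gain $G$ (which is already continuous); its purpose, as you anticipate in your closing remarks, is precisely to make $\Gamma f$ continuous so that $\Gamma$ maps $\cA_d$ into itself.
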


Notice that if $T<\infty$ then, given $z=(t,x)$, the infimum in \eqref{eq:tau*} is taken on $s\in[0,T-t]$ so that $\tau_*\le T-t$, $\P_z$-a.s.~as needed. The same comment applies to \eqref{eq:tau} in Lemma \ref{lem:OS} below.

The proof of Theorem \ref{thm:prob1} requires intermediate steps in order to show that the operator $\Gamma$ is a contraction in $\cA_d$. First we show in Lemma \ref{lem:Lambda} that the operator $\Lambda$ maps $\cA_d$ into itself. Second we prove in Lemma \ref{lem:OS} that an optimal stopping time exists in \eqref{eq:gamma} and that $\Gamma f$ is lower semi-continuous for each $f\in \cA_d$. Finally we show in Lemma \ref{lem:usc-G} that $\Gamma f$ is also upper semi-continuous for each $f\in \cA_d$, and hence continuous. The section ends with the proof of the contraction property of $\Gamma$.

\begin{lemma}\label{lem:Lambda}
For every $f \in \cA_d$ it holds that $\Lambda f \in \cA_d$.
\end{lemma}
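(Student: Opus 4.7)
The goal is to verify the three defining properties of membership in $\cA_d$ for $\Lambda f$: non-negativity (trivial, since $\psi,f\ge 0$ and $F$ is a probability measure), finiteness of $\|\Lambda f\|_{\cA_d}$, and continuity of $\Lambda f$ on $\R^d$.

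For the norm bound, the key ingredient is Assumption \ref{ass:sub}(i). Since $f,\psi\in\cA_d$, for every $y\in\R^d$ we have the pointwise estimate $f(y)\le \|f\|_{\cA_d}\sqrt{1+|y|^2_d}$ and similarly for $\psi$. Applying Jensen's (or Cauchy-Schwarz) inequality and the fact that $\widehat X$ is a non-negative $\P_x$-supermartingale, one obtains
\[
\E_x\bigl[\sqrt{1+|X_t|^2_d}\bigr]\le \sqrt{\E_x[1+|X_t|^2_d]}\le e^{r(1-\rho)t}\sqrt{1+|x|^2_d}.
\]
Plugging this into the definition of $\Lambda f$ and using that $F$ is a probability measure yields
\[
0\le (\Lambda f)(x)\le\bigl(p\|\psi\|_{\cA_d}+(1-p)\|f\|_{\cA_d}\bigr)\sqrt{1+|x|^2_d}\int_0^\infty e^{-r\rho t}F(\ud t)\le p\|\psi\|_{\cA_d}+(1-p)\|f\|_{\cA_d},
\]
after dividing by $\sqrt{1+|x|^2_d}$. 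This gives both finiteness and a useful bound on the contraction constants later.

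For continuity, fix $x\in\R^d$ and a sequence $x_n\to x$. For each fixed $t\ge 0$, the inner expectation converges by \eqref{eq:ui3}: $e^{-rt}\E[|\psi(X^{x_n}_t)-\psi(X^x_t)|]\to 0$ and similarly for $f$. To push this inside the outer integral against $F(\ud t)$, I will invoke dominated convergence using the estimate from the previous paragraph, which gives a uniform (in $n$) bound of the form $C\,e^{-r\rho t}$ (with $C$ depending only on $\sup_n\sqrt{1+|x_n|^2_d}$, which is finite since $x_n$ converges); this is integrable against the probability measure $F(\ud t)$. Hence $(\Lambda f)(x_n)\to(\Lambda f)(x)$, establishing continuity.

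For the finite-horizon case (\textbf{Problem 2'}), the same argument applies with the outer integral running over $[0,T-t]$; the only additional care concerns continuity with respect to the time coordinate. Writing $z_n=(t_n,x_n)\to z=(t,x)$ and splitting the difference $(\Lambda f)(z_n)-(\Lambda f)(z)$ into a common-range piece (handled as above) and a boundary piece of the form $\int_{(T-t)\wedge(T-t_n)}^{(T-t)\vee(T-t_n)} e^{-rs}\E_{z_n}[\cdots] F(\ud s)$, the latter vanishes because the integrand is uniformly bounded on compacts in $z_n$ and $F$ is continuous on $[0,T]$ by Assumption \ref{ass:finite-T}, so the measure of the shrinking interval tends to zero. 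The main (mild) obstacle throughout is producing a uniform integrable envelope so that dominated convergence is applicable both in the expectation and in the $F$-integration; Assumption \ref{ass:sub}(i) provides exactly the exponential envelope needed.
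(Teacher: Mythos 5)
Your proof is correct and follows essentially the same route as the paper's: the norm bound via the pointwise estimate $f(y)\le\|f\|_{\cA_d}(1+|y|_d^2)^{1/2}$, Jensen's inequality and the supermartingale property of $\widehat X$ from Assumption \ref{ass:sub}(i), and continuity via \eqref{eq:ui3} together with dominated convergence (for which your explicit envelope $C e^{-r\rho t}$ is exactly what the paper implicitly uses). Your extra care with the moving upper limit $T-t$ in the finite-horizon case, using continuity of $F$ from Assumption \ref{ass:finite-T}, is a sound elaboration of the paper's remark that the proof is ``identical'' there.
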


\begin{proof}
Consider the infinite-time horizon problem. First, for every function $f \in \cA_d$, we have that $\Lambda f\ge 0$. Moreover
\begin{align}
|(\Lambda f)(x)|&
\leq\int_0^\infty e^{-rt}\left( p \E_x\big[ |\psi(X_t)|\big] +(1-p) \E_x\big[|f(X_t)|\big]\right)F(\ud t)\\
&\leq\big(p \|\psi\|_{\cA_d}+(1-p) \|f\|_{\cA_d}\big)\int_0^\infty e^{-rt} \E\left[(1+|X^x_t|^2_d)^\frac{1}{2}\right]F(\ud t)\\
&\leq ( p \|\psi\|_{\cA_d}+(1-p) \|f\|_{\cA_d}) (1+|x|^2_d)^{\frac{1}{2}}
\end{align}
where we first used triangular inequality and then, in the final step, we used Jensen's inequality and condition (i) in Assumption \ref{ass:sub}.
Consequently $\|\Lambda f\|_{\cA_d}<\infty$.

Since the flow $x\mapsto X^x_t$ is continuous, we can use dominated convergence, continuity of $\psi$ and $f$ and \eqref{eq:ui3} to conclude
\begin{align}
\lim_{n \to \infty}(\Lambda f) (x_n)=(\Lambda f) (x).
\end{align}
The proof is identical in the finite-time horizon case, where we use $\Lambda f$ as in \eqref{eq:Lambda-T}.
\end{proof}

\begin{lemma}\label{lem:OS}
For every $f\in \cA_d$, the stopping problem in \eqref{eq:gamma} is well-posed in the sense that
\begin{align}\label{eq:tau}
\tau_*^f=\inf\big\{s \geq 0: (\Gamma f)(X_s)=\max\left[\varphi(X_s),(\Lambda f)(X_s) \right]\big\}
\end{align}
is an optimal stopping time, the function $\Gamma f$ is lower semi-continuous, the process
\begin{align}\label{super}
t \mapsto e^{-rt} (\Gamma f)(X_t), \quad t \in [0,\infty]
\end{align}
is a right-continuous (non-negative), $\P_x$-supermartingale and the stopped process
\begin{align}\label{mg}
t \mapsto e^{-r(t\wedge \tau_*^f)} (\Gamma f)(X_{t\wedge \tau_*^f}), \quad t \in [0,\infty)
\end{align}
is a right-continuous (non-negative), $\P_x$-martingale, for any $x\in\R^d$.
\end{lemma}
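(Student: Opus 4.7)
The plan is to recognise \eqref{eq:gamma} as a standard optimal stopping problem with continuous gain $G:=\max\{\varphi,\Lambda f\}$ and to invoke the Snell envelope machinery once suitable integrability is checked. By Assumption \ref{ass:sub}(iv) and Lemma \ref{lem:Lambda}, $G$ is continuous, non-negative, and $\|G\|_{\cA_d}\le\|\varphi\|_{\cA_d}+\|\Lambda f\|_{\cA_d}<\infty$, so \eqref{eq:gamma} rewrites as $(\Gamma f)(x)=\sup_{\tau\in\cT}\E_x[e^{-r\tau}G(X_\tau)]$. The bound $G(y)\le c_0(1+|y|_d)$ with $c_0:=\|G\|_{\cA_d}$, combined with Assumption \ref{ass:sub}(ii), shows that for every compact $K\subset\R^d$ the family $\{e^{-r\tau}G(X^x_\tau):\tau\in\cT,\,x\in K\}$ is dominated in $L^1$ by $c_0\bigl(1+\sup_{t\ge 0}e^{-rt}|X_t|_d\bigr)$. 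Assumption \ref{ass:sub}(i) yields $e^{-rt}|X_t|_d\to 0$ $\P_x$-a.s.\ as $t\to\infty$ (since $e^{-rt}|X_t|_d\le e^{-r\rho t}\widehat X_t^{1/2}$ with $\widehat X_\infty$ finite a.s.), making the convention at $\tau=\infty$ consistent.

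Next I would establish the lower semi-continuity of $\Gamma f$. Fixing $x\in\R^d$, $x_n\to x$ and $\varepsilon>0$, pick an $\varepsilon$-optimal $\tau_\varepsilon\in\cT$ for $(\Gamma f)(x)$; since $\cT$ is the set of $\bF$-stopping times and does not depend on the starting point, the same $\tau_\varepsilon$ can be evaluated along every flow $X^{x_n}$. Assumption \ref{ass:sub}(iii) gives $\sup_t e^{-rt}|X^{x_n}_t-X^x_t|_d\to 0$ in $L^1$, so $X^{x_n}_{\tau_\varepsilon}\to X^x_{\tau_\varepsilon}$ in probability and hence $\P$-a.s.\ along a subsequence. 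Continuity of $G$ combined with the $L^1$-domination above and the dominated convergence theorem yields $\E[e^{-r\tau_\varepsilon}G(X^{x_n}_{\tau_\varepsilon})]\to\E[e^{-r\tau_\varepsilon}G(X^x_{\tau_\varepsilon})]$. Using $(\Gamma f)(x_n)\ge\E[e^{-r\tau_\varepsilon}G(X^{x_n}_{\tau_\varepsilon})]$, passing to $\liminf_n$ and then $\varepsilon\to 0$ gives $\liminf_n(\Gamma f)(x_n)\ge(\Gamma f)(x)$.

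For the (super)martingale statements \eqref{super}--\eqref{mg} and the optimality of $\tau^f_*$, I would invoke standard Snell-envelope theory (e.g.\ Peskir and Shiryaev, \emph{Optimal Stopping and Free-Boundary Problems}, Ch.~I). Setting $Z_t:=\operatorname{ess\,sup}_{\sigma\in\cT,\,\sigma\ge t}\E[e^{-r\sigma}G(X_\sigma)\mid\cF_t]$, the uniform $L^1$-domination just proved together with right-continuity of $\bF$ and of $t\mapsto G(X_t)$ makes $Z$ a right-continuous non-negative supermartingale. The strong Markov property of $X$ combined with the very definition of $\Gamma f$ forces the identification $Z_t=e^{-rt}(\Gamma f)(X_t)$ $\P_x$-a.s.\ for every $t$, which yields \eqref{super}. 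Optimality of $\tau^f_*$ in \eqref{eq:tau} and the martingale identity \eqref{mg} then follow from the fact that $Z$ reaches the obstacle $e^{-rt}G(X_t)$ exactly at $\tau^f_*$ (using quasi left-continuity of $X$) together with optional sampling. The finite-horizon formulation runs identically on $[0,T-t]$, with \eqref{eq:vT} providing the terminal datum.

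The main obstacle is the identification $Z_t=e^{-rt}(\Gamma f)(X_t)$ given only lower semi-continuity of $\Gamma f$ at this stage: consistency between the pointwise-in-$t$ Markovian identification and the right-continuous modification of $Z$ hinges on the right-continuity of $\bF$ and on the $L^1$-domination above. Crucially, upper semi-continuity of $\Gamma f$ is not required here and is addressed separately in Lemma \ref{lem:usc-G}.
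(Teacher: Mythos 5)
Your proposal is correct in substance and, for the main conclusions (optimality of $\tau^f_*$ and the supermartingale/martingale properties \eqref{super}--\eqref{mg}), it follows essentially the same route as the paper: check that the gain $G=\max\{\varphi,\Lambda f\}$ is continuous (via Lemma \ref{lem:Lambda}) with linear growth, deduce from Assumption \ref{ass:sub}(ii) the integrability condition $\E_x\big[\sup_{t\ge 0}e^{-rt}G(X_t)\big]<\infty$, and then appeal to the Markovian optimal stopping theory of Peskir and Shiryaev. The paper invokes precisely Cor.~2.9, Ch.~I of that book, which takes as input lower semi-continuity of the value, upper semi-continuity (here continuity) of the gain and the above integrability, and returns \eqref{eq:tau}, \eqref{super} and \eqref{mg} directly; your hand-made Snell-envelope identification $Z_t=e^{-rt}(\Gamma f)(X_t)$ is therefore not needed as a separate step, and the ``main obstacle'' you flag at the end disappears once that corollary is used. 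Where you genuinely diverge is the proof of lower semi-continuity of $\Gamma f$: the paper gets it abstractly from the Feller property of $X$ (continuity of the flow) by combining Lemmas 3 and 4 of Shiryaev, Ch.~3, while you give a direct argument with an $\eps$-optimal stopping time $\tau_\eps$ evaluated along the flows $X^{x_n}$, exploiting Assumption \ref{ass:sub}(iii). This is a legitimate and arguably more self-contained alternative, mirroring the paper's own technique for the usc direction in Lemma \ref{lem:usc-G}. Two small points should be tightened: (a) your dominating variable $c_0\big(1+\sup_{t\ge 0}e^{-rt}|X^{x_n}_t|_d\big)$ depends on $n$, so plain dominated convergence does not apply verbatim; argue instead via uniform integrability of this family (which follows since $\sup_{t\ge 0}e^{-rt}|X^{x_n}_t-X^x_t|_d\to 0$ in $L^1$ by (iii), together with (ii)) or a generalized dominated convergence theorem; (b) the a.s.\ convergence of $X^{x_n}_{\tau_\eps}$ holds only along subsequences, so conclude for the full sequence by the standard sub-subsequence argument. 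Neither issue affects the validity of the approach.
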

\begin{proof}
Fix $f\in \cA_d$. By Lemma \ref{lem:Lambda} it is immediate to see that $x \mapsto \max\{\varphi(x), (\Lambda f) (x)\}$ is continuous and there exists a constant $c>0$ (depending on $\|f\|_{\cA_d}$, $\|\psi\|_{\cA_d}$ and $\|\varphi\|_{\cA_d}$) such that
\begin{align}\label{eq:max}
\max\{\varphi(x),(\Lambda f)(x)\}\le c(1+|x|_d)
\end{align}
since $(1+|x|_d^2)^{1/2}\le 1+|x|_d$. By (ii) in Assumption \ref{ass:sub} and \eqref{eq:max} we get
\begin{align}\label{eq:ui}
\E_x\left[\sup_{t\ge 0} e^{-rt}\max\{\varphi(X_t),(\Lambda f)(X_t)\}\right]<\infty.
\end{align}
The assumption of continuity of the flow $x\mapsto X^x_t$, for $t\ge 0$, implies that $X$ is a Feller process. Then, combining Lemma 3 and Lemma 4 from \cite[Ch.~3]{shiryaev2007optimal} we obtain that $\Gamma f$ is lower semi-continuous.

Lower semi-continuity of $\Gamma f$, continuity of the payoff and \eqref{eq:ui} allow us to apply \citet[Cor.~2.9, Ch.~1, Sec.~2]{PS}. Then $\tau_*^f$ as in \eqref{eq:tau} is indeed optimal and the (super)-martingale properties \eqref{super} and \eqref{mg} of the discounted value process hold.
\end{proof}

\begin{lemma}\label{lem:usc-G}
For every $f\in\cA_d$ and $x\in \R^d$ given and fixed, we have
\begin{align}\label{eq:usc}
\limsup_{n\to\infty}(\Gamma f)(x_n)\le (\Gamma f)(x)
\end{align}
for any sequence $(x_n)_{n\ge 1}$ such that $x_n\to x$ as $n\to\infty$.
\end{lemma}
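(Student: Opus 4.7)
Fix $f\in\cA_d$ and write $G:=\max\{\varphi,\Lambda f\}$, which lies in $\cA_d$ by Lemma \ref{lem:Lambda}; in particular $|G(y)|\le c_G(1+|y|_d)$ for some $c_G>0$. Let $x\in\R^d$ and $x_n\to x$; without loss of generality $(x_n)$ lies in a compact set. The strategy is to use the \emph{optimal} stopping time at $x_n$ as a candidate at $x$ and to control the error coming from evaluating the payoff along a different flow.

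The plan is as follows. First, invoke Lemma \ref{lem:OS} to obtain $\tau_n\in\cT$ with $(\Gamma f)(x_n)=\E\bigl[e^{-r\tau_n}G(X^{x_n}_{\tau_n})\bigr]$. Since $\tau_n\in\cT$ is admissible at $x$, one also has $(\Gamma f)(x)\ge\E\bigl[e^{-r\tau_n}G(X^x_{\tau_n})\bigr]$, so setting $Z_n:=e^{-r\tau_n}\bigl|G(X^{x_n}_{\tau_n})-G(X^x_{\tau_n})\bigr|$ we get
\begin{align}
(\Gamma f)(x_n)-(\Gamma f)(x)\le \E[Z_n],
\end{align}
and it suffices to show $\E[Z_n]\to 0$. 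Linear growth of $G$ together with (i) of Assumption \ref{ass:sub} and the optional sampling estimate \eqref{eq:Optsam} applied to $\widehat X^{x_n}$ and $\widehat X^x$ yield a uniform bound $\sup_n\E[Z_n^2]<\infty$, so the family $(Z_n)$ is uniformly integrable and it is enough to prove $Z_n\to 0$ in probability.

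To obtain convergence in probability I split $Z_n=Z_n\I_{\{\tau_n\le N\}}+Z_n\I_{\{\tau_n>N\}}$. For the tail, on $\{\tau_n>N\}$ the factorisation $e^{-r\tau_n}=e^{-r\rho N}\,e^{-r(1-\rho)\tau_n}\cdot e^{-r\rho(\tau_n-N)}$ combined with the $L^2$-bound $\E\bigl[(e^{-r(1-\rho)\tau_n}|X^{x_n}_{\tau_n}|_d)^2\bigr]\le 1+|x_n|^2$ (from (i) and \eqref{eq:Optsam}) and the analogous bound at $x$ gives $\E[Z_n\I_{\{\tau_n>N\}}]\le C e^{-r\rho N}$ uniformly in $n$, which is arbitrarily small for $N$ large. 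For the main term, Assumption \ref{ass:sub}(iii) says $Y_n:=\sup_{t\ge 0}e^{-rt}|X^{x_n}_t-X^x_t|_d\to 0$ in $L^1$; passing to a subsequence one has $Y_n\to 0$ a.s. On $\{\tau_n\le N\}$ the inequality $|X^{x_n}_{\tau_n}-X^x_{\tau_n}|_d\le e^{rN}Y_n$ holds, and since a.e.\ sample path of $X^x$ is bounded on $[0,N]$ (by right-continuity), both $X^{x_n}_{\tau_n}$ and $X^x_{\tau_n}$ eventually lie in a random compact set; continuity of $G$ on that compact set gives $Z_n\I_{\{\tau_n\le N\}}\to 0$ a.s.\ along the subsequence, hence in probability for the full sequence. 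A standard sub-subsequence argument then upgrades this to $Z_n\to 0$ in probability, concluding the proof.

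\textbf{Main obstacle and finite-horizon remark.} The delicate point is that $\tau_n$ depends on $x_n$, so one cannot appeal directly to continuous dependence of the payoff on initial data; everything must be done pathwise, controlling simultaneously the time at which the evaluation happens and the two flows that differ. The role of Assumption \ref{ass:sub}(i)--(ii) is precisely to dominate the unbounded temporal behaviour, while (iii) handles the spatial continuity uniformly in $t$. In the finite-horizon setting of Section \ref{sec:finite} the stopping times $\tau_n$ are automatically bounded by $T$, so the tail estimate on $\{\tau_n>N\}$ becomes unnecessary and the argument collapses to the ``main part'' above with $N=T$; this is the small tweak alluded to in Remark \ref{rem:finite_time_horizon}.
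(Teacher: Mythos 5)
Your infinite-horizon argument is correct and shares its skeleton with the paper's proof: you take the optimal $\tau_n$ at $x_n$ as a suboptimal candidate at $x$, split at a deterministic time threshold, and control the tail $\{\tau_n>N\}$ by exactly the paper's $\rho$-splitting together with the optional sampling bound \eqref{eq:Optsam}. Where you differ is the main term: the paper runs a quantitative localisation, introducing the exit time $\sigma^m_n$ from the ball of radius $m$, using uniform continuity of $G$ on that ball, Assumption \ref{ass:sub}(iii) in the form of a probability estimate, and Assumption \ref{ass:sub}(ii) plus Markov's inequality to control $\P(\sigma^m_n<S)$, finally sending $\eta,\delta\to 0$, $m\to\infty$, $S\to\infty$. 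You instead argue softly: the uniform $L^2$ bound coming from (i) and \eqref{eq:Optsam} gives uniform integrability of $Z_n$, and (iii) along a subsequence gives a.s.\ convergence on $\{\tau_n\le N\}$, a random compact set playing the role of the paper's deterministic ball. This is a legitimate alternative (Vitali's theorem in place of explicit Cauchy--Schwarz bookkeeping) and is arguably cleaner. One small repair: boundedness of $t\mapsto X^x_t(\omega)$ on $[0,N]$ does not follow from right-continuity alone (left limits are not assumed), but it does follow from Assumption \ref{ass:sub}(ii), since $\sup_{t\ge 0}e^{-rt}|X^x_t|_d<\infty$ a.s.; invoke that instead.

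Your closing remark on the finite horizon, however, is too quick. There the admissible stopping times at $z=(t,x)$ are those bounded by $T-t$, while the optimal $\tau_n$ at $z_n=(t_n,x_n)$ only satisfies $\tau_n\le T-t_n$, which can exceed $T-t$ when $t_n<t$; so the very first step (``$\tau_n$ is admissible at $x$'') fails, and the argument does not simply collapse to the case $N=T$. One must compare with $\tau_n\wedge(T-t)$ and control an extra boundary term on $\{\tau_n>T-t\}$, which the paper handles using right-continuity of $t\mapsto X_t$ and dominated convergence over the short interval of length $|t-t_n|$ after $T-t$. This is the real content of the ``small tweak'' mentioned in Remark \ref{rem:finite_time_horizon}, and your sketch omits it.
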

\begin{proof}
We first address the problem with infinite-time horizon and then the one with finite-time horizon.

{\bf Step 1}. ({\em Infinite-time horizon}.)
Fix $f\in\cA_d$, $x\in\R^d$ and let $(x_n)_{n\ge 1}$ be a sequence such that $x_n\to x$ as $n\to\infty$. With no loss of generality we can assume $|x_n|_d\le 1+|x|_d$ for $n\ge 1$. In order to simplify the notation we set $G(x):=\max\{\varphi(x),(\Lambda f)(x)\}$, so that $G\in\cA_d$ by Lemma \ref{lem:Lambda}.

Thanks to Lemma \ref{lem:OS}, for any $x_n$ there exists an optimal stopping time $\tau_n:=\tau_*^f(x_n)$ for the problem in \eqref{eq:gamma} with value function $(\Gamma f)(x_n)$. Take an arbitrary deterministic time $S>0$, then we have
\begin{align}\label{eq:usc1}
&(\Gamma f)(x_n)-(\Gamma f)(x)\\
&\le \E\left[e^{-r\tau_n}\left(G(X^{x_n}_{\tau_n})\!-\!G(X^{x}_{\tau_n})\right)\right]\\
&= \E\left[e^{-r\tau_n}\left(G(X^{x_n}_{\tau_n})\!-\!G(X^{x}_{\tau_n})\right)\I_{\{\tau_n\le S\}}\right]+\E\left[e^{-r\tau_n}\left(G(X^{x_n}_{\tau_n})\!-\!G(X^{x}_{\tau_n})\right)\I_{\{\tau_n>S\}}\right]
\end{align}
We need to consider the two terms in the last line separately.

For the second term, using Cauchy-Schwarz inequality, the growth condition on $G\in\cA_d$ and (i) in Assumption \ref{ass:sub} (see also \eqref{eq:Optsam}) we obtain
\begin{align}\label{eq:usc2}
&\E\left[e^{-r\tau_n}\left(G(X^{x_n}_{\tau_n})\!-\!G(X^{x}_{\tau_n})\right)\I_{\{\tau_n>S\}}\right] \\ & \le \E\left[e^{-2r \rho \tau_n}\I_{\{\tau_n>S\}}\right]^{\frac{1}{2}}\E\left[e^{-2r(1-\rho)\tau_n}\left(G(X^{x_n}_{\tau_n})-G(X^{x}_{\tau_n})\right)^2\right]^{\frac{1}{2}}\\
&\le \sqrt{2}\|G\|_{\cA_d} e^{-r\rho S}\E\left[e^{-2r(1-\rho)\tau_n}\left(2+|X^{x_n}_{\tau_n}|^2_d+|X^{x}_{\tau_n}|^2_d\right)\right]^{\frac{1}{2}}\\
&\le  c_1 (1+|x|_d)\|G\|_{\cA_d} e^{-r\rho S},
\end{align}
where in the final inequality we have used that $|x_n|_d\le 1+|x|_d$ and $(1+|x|_d^2)^{\frac{1}{2}}\le (1+|x|_d)$. Notice that the constant $c_1>0$ is independent of $S$ and $n$.

Next we consider the first  term in the last line of \eqref{eq:usc1}. We fix $m\ge 1$ and define the stopping times
\[
\sigma^m_{n}:=\inf\{t\ge 0: |X^{x_n}_t|_d\vee|X_t^x|_d\ge m\}.
\]
Then we have that
\begin{align}\label{eq:usc3}
&\E\left[e^{-r\tau_n}\left(G(X^{x_n}_{\tau_n})\!-\!G(X^{x}_{\tau_n})\right)\I_{\{\tau_n\le S\}}\right]\\
&=\!\E\left[e^{-r\tau_n}\!\left(G(X^{x_n}_{\tau_n})\!-\!G(X^{x}_{\tau_n})\right)\I_{\{\tau_n\le S\}\cap\{\tau_n\le \sigma_n^m\}}\right]\\
&\quad+\!
\E\left[e^{-r\tau_n}\!\left(G(X^{x_n}_{\tau_n})\!-\!G(X^{x}_{\tau_n})\right)\I_{\{\tau_n\le S\}\cap\{\tau_n> \sigma_n^m\}}\right]
\end{align}
and we need to study separately the two terms
\[
A_1:=\!\E\left[e^{-r\tau_n}\!\left(G(X^{x_n}_{\tau_n})\!-\!G(X^{x}_{\tau_n})\right)\I_{\{\tau_n\le S\}\cap\{\tau_n\le \sigma_n^m\}}\right]
\]
and
\[
A_2:=\E\left[e^{-r\tau_n}\!\left(G(X^{x_n}_{\tau_n})\!-\!G(X^{x}_{\tau_n})\right)\I_{\{\tau_n\le S\}\cap\{\tau_n> \sigma_n^m\}}\right].
\]

For the first one we notice that, given an arbitrary $\eta>0$, there exists $\eps_{\eta,m}>0$ such that
\begin{align}
\sup|G(x)-G(y)|\le \eta,
\end{align}
where the supremum is taken over all $|x|_d\le m$, $|y|_d\le m$, such that $|x-y|_d\le \eps_{\eta,m}$. Moreover, due to (iii) in Assumption \ref{ass:sub}, for any given $\delta>0$ we can find $N_{\delta,S,\eta,m}\ge 1$ such that
\begin{align}\label{eq:usc4}
\P\left(\sup_{0\le t\le S}|X^{x_n}_t-X^x_t|>\eps_{\eta,m}\right)\le \delta,\quad\text{for all $n\ge N_{\delta,S,\eta,m}$}.
\end{align}
Set
\[
E_{n,S,\eta,m}:=\left\{\sup_{0\le t\le S}|X^{x_n}_t-X^x_t|>\eps_{\eta,m}\right\}
\]
and for simplicity denote $E=E_{n,S,\eta,m}$. Using Cauchy-Schwarz inequality, (i) in Assumption \ref{ass:sub} and estimates similar to those in \eqref{eq:usc2}, we obtain
\begin{align}\label{eq:usc5}
A_1=&\E\left[e^{-r\tau_n}\!\left(G(X^{x_n}_{\tau_n})\!-\!G(X^{x}_{\tau_n})\right)\I_{\{\tau_n\le S\}\cap\{\tau_n\le \sigma_n^m\}\cap E}\right]\\
&+\E\left[e^{-r\tau_n}\!\left(G(X^{x_n}_{\tau_n})\!-\!G(X^{x}_{\tau_n})\right)\I_{\{\tau_n\le S\}\cap\{\tau_n\le \sigma_n^m\}\cap E^c}\right]\\
\le & \E\left[e^{-2r\tau_n}\!\left(G(X^{x_n}_{\tau_n})\!-\!G(X^{x}_{\tau_n})\right)^2\right]^{\frac{1}{2}}\P\left(E\right)^{\frac{1}{2}}+\eta\\
\le &  \|G\|_{\cA_d} c_2 (1+|x|_d)\sqrt{\delta}+\eta,\qquad\text{for $n\ge N_{\delta,S,\eta,m}$}
\end{align}
where the constant $c_2>0$ is independent of $\delta$, $\eta$, $n$, $m$, $S$.

Likewise, for the other term we obtain
\begin{align}\label{eq:usc6}
A_2\le& \E\left[e^{-2r\tau_n}\!\left(G(X^{x_n}_{\tau_n})\!-\!G(X^{x}_{\tau_n})\right)^2\right]^{\frac{1}{2}}\P(\sigma^m_n<\tau_n\le S)^{\frac{1}{2}}\\
\le&  \|G\|_{\cA_d} c_3 (1+|x|_d)\P(\sigma^m_n< S)^{\frac{1}{2}}.
\end{align}
We now find an upper bound for $\P(\sigma^m_n< S)$. By sub-additivity of $\P$, Markov inequality and (ii) in Assumption \ref{ass:sub} we obtain
\begin{align}\label{eq:usc7}
\P(\sigma^m_n< S)\le& \P\left(\sup_{0\le t\le S }|X^{x_n}_t|_d>m\right)+\P\left(\sup_{0\le t\le S }|X^{x}_t|_d>m\right)\\
\le & \frac{1}{m}e^{rS}\left(\E\left[\sup_{0\le t\le S }e^{-rt}|X^{x_n}_t|_d\right]+\E\left[\sup_{0\le t\le S }e^{-rt}|X^{x}_t|_d\right]\right)\le \frac{1}{m}e^{rS}c_4.
\end{align}
Both the constants $c_3,c_4>0$ are independent of $n$, $m$, $S$ (since $x_n$ and $x$ lie in a compact).

Combining \eqref{eq:usc1}, \eqref{eq:usc2}, \eqref{eq:usc5}, \eqref{eq:usc6} and \eqref{eq:usc7} we get, for all $n\ge N_{\delta,S,\eta,m}$
\begin{align}\label{eq:usc8}
(\Gamma f)(x_n)-(\Gamma f)(x)\le c (1+|x|_d)\|G\|_{\cA_d} \left(e^{-r\rho S}+\sqrt{\delta}+e^{rS/2}/\sqrt{m}\right)+\eta,
\end{align}
where $c:=\max\{ c_i, i=1,\ldots, 4\}$.
Hence, in particular
\begin{align}\label{eq:usc9}
\limsup_{n\to\infty}(\Gamma f)(x_n)-(\Gamma f)(x)\le c (1+|x|_d)\|G\|_{\cA_d} \left(e^{-r\rho S}+\sqrt{\delta}+e^{rS/2}/\sqrt{m}\right)+\eta.
\end{align}
Keeping $S$ fixed and letting $\eta,\delta\to 0$ and $m\to\infty$ gives
\begin{align}\label{eq:usc10}
\limsup_{n\to\infty}(\Gamma f)(x_n)-(\Gamma f)(x)\le c (1+|x|_d)\|G\|_{\cA_d} e^{-r\rho S}.
\end{align}
Finally, letting $S\to\infty$ we obtain \eqref{eq:usc}.
\vspace{+4pt}

{\bf Step 2}. ({\em Finite-time horizon}.) Recall the set-up and notation from Section \ref{sec:finite}. Fix $z:=(t,x^1,\ldots x^{d-1})$ and take $z_n$ converging to $z$ as $n\to\infty$, with $z_n=(t_n,x^1_n,\ldots x^{d-1}_n)$. Then, a stopping time $\tau$ is admissible for $(\Gamma f)(z)$ provided that $\tau\le T-t$. Letting $\tau_n$ be optimal for $(\Gamma f)(z_n)$ we have that $\tau_n\wedge(T-t)$ is admissible for $(\Gamma f)(x)$. Using these stopping times, the inequality in \eqref{eq:usc1} changes to
\begin{align*}
&(\Gamma f)(z_n)-(\Gamma f)(z) \\
&\le \E\left[e^{-r\tau_n}\left(G(X^{z_n}_{\tau_n})\!-\!G(X^{z}_{\tau_n})\right)\I_{\{\tau_n\le T-t\}}\right]\\
&\quad+\E\left[\left(e^{-r\tau_n}G(X^{z_n}_{\tau_n})\!-\!e^{-r(T-t)}G(X^{z}_{T-t})\right)\I_{\{\tau_n>T-t\}}\right].
\end{align*}
The first term on the right-hand side above can be treated exactly as in step 1 with the event $\{\tau_n\le S\}$ therein replaced by $\{\tau_n\le T-t\}$. Hence it gives
\begin{align*}
\E\left[e^{-r\tau_n}\left(G(X^{z_n}_{\tau_n})\!-\!G(X^{z}_{\tau_n})\right)\I_{\{\tau_n\le T-t\}}\right]\le \eta+\|G\|_{\cA_d}c(1+|x|_d)\big(\sqrt{\delta}+e^{rT/2}/\sqrt{m}\big),
\end{align*}
for some constant $c>0$ and any $\delta,\eta>0$ and $m\ge 1$. For the second term we use that $T<\infty$ and that $|t-t_n|$ can be made arbitrarily small. First we write
\begin{align*}
\E&\left[\left(e^{-r\tau_n}G(X^{z_n}_{\tau_n})\!-\!e^{-r(T-t)}G(X^{z}_{T-t})\right)\I_{\{\tau_n>T-t\}}\right]\\
=&\E\left[e^{-r\tau_n}\left(G(X^{z_n}_{\tau_n})\!-\!G(X^{z}_{\tau_n})\right)\I_{\{\tau_n>T-t\}}+\left(e^{-r\tau_n}G(X^{z}_{\tau_n})\!-\!e^{-r(T-t)}G(X^{z}_{T-t})\right)\I_{\{\tau_n>T-t\}}\right]\\
\le &\E\left[e^{-r\tau_n}\left|G(X^{z_n}_{\tau_n})\!-\!G(X^{z}_{\tau_n})\right|\I_{\{\tau_n>T-t\}}\right]\\
&+e^{-r(T-t)}\E\left[\sup_{0\le u\le |t-t_n|}\left|e^{-r u}G(X^{z}_{T-t+u})\!-\!G(X^{z}_{T-t})\right|\right].
\end{align*}
By dominated convergence and right-continuity of $t\mapsto X_t$ we obtain
\[
\lim_{n\to\infty}\E\left[\sup_{0\le u\le |t-t_n|}\left|e^{-r u}G(X^{z}_{T-t+u})\!-\!G(X^{z}_{T-t})\right|\right]=0.
\]
For the remaining term, we notice that $\{\tau_n>T-t\}=\{\tau_n>T-t\}\cap \{\tau_n\le T-t_n\}\subset \{\tau_n\le T\}$ for all $n\ge 1$. Hence
\begin{align*}
\E\left[e^{-r\tau_n}\left(G(X^{z_n}_{\tau_n})\!-\!G(X^{z}_{\tau_n})\right)\I_{\{\tau_n>T-t\}}\right]\le& \E\left[e^{-r\tau_n}\left|G(X^{z_n}_{\tau_n})\!-\!G(X^{z}_{\tau_n})\right|\I_{\{\tau_n\le T\}}\right]\\
\le &\eta+\|G\|_{\cA_d}c_2(1+|x|_d)\big(\sqrt{\delta}+e^{rT/2}/\sqrt{m}\big),
\end{align*}
by the same arguments as in step 1 but with $\{\tau_n\le T\}$ instead of $\{\tau_n\le S\}$, for some constant $c>0$ and any $\delta,\eta>0$ and $m\ge 1$.

Letting $\delta,\eta\downarrow 0$ and $m\to\infty$ we conclude.
\end{proof}

We are now ready to prove Theorem \ref{thm:prob1}.
\begin{proof}{{\bf(Proof of Theorem \ref{thm:prob1})}}
We only need to show that $\Gamma$ is a contraction in $\cA_d$. Optimality of $\tau_*$ and the (super)martingale property of the value function $v$ will then follow from Lemma \ref{lem:OS}, upon choosing $f=v$ in all statements. We only give the proof for the infinite-time horizon as the one for the finite-time horizon is identical up to a change of notation.

First we prove that $\Gamma$ maps $\cA_d$ into itself. Fix $f\in\cA_d$ and recall that, by Lemma \ref{lem:OS} and Lemma \ref{lem:usc-G}, the mapping $x\mapsto (\Gamma f)(x)$ is continuous from $\R^d$ to $\R_+$. Then, since $\varphi\in\cA_d$, using Lemma \ref{lem:Lambda} and Cauchy-Schwarz inequality, for any $\tau\in\cT$, we obtain
\begin{align}\label{eq:cG0}
\left|\E\left[e^{-r\tau}\max\{\varphi(X^x_\tau),(\Lambda f)(X^x_\tau)\}\right]\right|^2\le c_0\E\left[e^{-2r\tau}(1+|X^x_\tau|^2_d)\right]\le c_0 (1+|x|^2_d),
\end{align}
where the final inequality follows from (i) in Assumption \ref{ass:sub} and the positive constant $c_0$ depends on $\|\varphi\|_{\cA_d}$ and $\|(\Lambda f)\|_{\cA_d}$. Using \eqref{eq:cG0} it is immediate to see that $\|(\Gamma f)\|_{\cA_d}\le \sqrt{c_0}$, hence $\Gamma f\in\cA_d$.

To prove that $\Gamma$ is a contraction, take $f\in\cA_d$ and $g\in\cA_d$ and denote by $\tau_*^f$ and $\tau^g_*$ the optimal stopping times as in \eqref{eq:tau} for $\Gamma f$ and $\Gamma g$, respectively. Fix $x\in\R^d$, then
\begin{align}\label{eq:cG1}
(\Gamma f)(x)-(\Gamma g)(x) \le&\E\left[e^{-r\tau_*^f}|(\Lambda f)(X^x_{\tau_*^f})-(\Lambda g)(X^x_{\tau_*^f})|\right] \\
\le & \E\left[e^{-2r\tau_*^f}|(\Lambda f)(X^x_{\tau_*^f})-(\Lambda g)(X^x_{\tau_*^f})|^2\right]^{\frac{1}{2}}\\
\le & \E\left[e^{-2r\tau_*^f}\frac{|(\Lambda f)(X^x_{\tau_*^f})-(\Lambda g)(X^x_{\tau_*^f})|^2}{1+|X^x_{\tau_*^f}|_d^2}(1+|X^x_{\tau_*^f}|_d^2)\right]^{\frac{1}{2}}\\
\le &\|(\Lambda f)-(\Lambda g)\|_{\cA_d}\E\left[e^{-2r\tau_*^f}(1+|X^x_{\tau_*^f}|_d^2)\right]^{\frac{1}{2}}\\
\le &\|(\Lambda f)-(\Lambda g)\|_{\cA_d}(1+|x|^2_d)^{\frac{1}{2}},
\end{align}
where in the first inequality we use that $\tau_*^f$ is sub-optimal for $(\Gamma g)(x)$ and $z\mapsto \max\{\varphi,z\}$ is $1$-Lipschitz, in the second one we use Jensen's inequality and in the final one we use (i) in Assumption \ref{ass:sub}.

Using the same argument, with $\tau^g_*$ in place of $\tau^f_*$ we also obtain
\begin{align}\label{eq:cG2}
(\Gamma g)(x)-(\Gamma f)(x)\le \|(\Lambda f)-(\Lambda g)\|_{\cA_d}(1+|x|^2_d)^{\frac{1}{2}}
\end{align}
and therefore, combining \eqref{eq:cG1} and \eqref{eq:cG2}, we get
\begin{align}\label{eq:estimate}
\frac{|(\Gamma f)(x)-(\Gamma g)(x)|}{(1+|x|^2_d)^{1/2}}\le \|(\Lambda f)-(\Lambda g)\|_{\cA_d}.
\end{align}
Taking the supremum over $x\in \R^d$ in \eqref{eq:estimate} leads to
\begin{align}\label{eq:cG3}
\|(\Gamma f)-(\Gamma g)\|_{\cA_d}\le \|(\Lambda f)-(\Lambda g)\|_{\cA_d}.
\end{align}

Moreover, for every fixed $x\in\R^d$, using triangular inequality and Jensen's inequality we get
\begin{align}\label{eq:cG4}
&\left|(\Lambda f)(x)-(\Lambda g)(x)\right|\\
&\le (1-p)\int_0^\infty e^{-rt} \E[\left|f(X^x_t)-g(X^x_t)\right|]F(\ud t)\\
&= (1-p)\int_0^\infty e^{-rt} \E\left[\frac{\left|f(X^x_t)-g(X^x_t)\right|}{(1+|X^x_t|_d^2)^{1/2}}
(1+|X^x_t|_d^2)^{1/2}\right]F(\ud t)\\
&\le (1-p)\|f-g\|_{\cA_d}\int_0^\infty \left(\E\left[ e^{-2rt}(1+|X^x_t|_d^2)\right]\right)^{1/2}F(\ud t)\\
&\le (1-p)\|f-g\|_{\cA_d}(1+|x|_d^2)^{1/2},
\end{align}
where the last inequality uses (i) in Assumption \ref{ass:sub}. From \eqref{eq:cG4} we deduce $\|(\Lambda f)-(\Lambda g)\|_{\cA_d}\le (1-p)\|f-g\|_{\cA_d}$ which, plugged back into \eqref{eq:cG3}, gives
\[
\|(\Gamma f)-(\Gamma g)\|_{\cA_d}\le (1-p)\|f-g\|_{\cA_d}.
\]
Since $p\in(0,1)$, the operator $\Gamma$ is a contraction and the proof is complete.
\end{proof}

The arguments of proof employed above require no assumption on the cumulative distribution function $F$. However, there is one particular case which deserves a comment. Intuitively, if the payoff $\psi(X)$ is revealed with no delay, i.e.~$\P(\vartheta=0)=1$, the optimiser would always choose $\alpha=1$ in \eqref{eq:value}. Indeed, if $\psi(X)$ is not achieved on the first attempt (i.e., with probability $1-p$) the investor learns about it immediately and she will instantly stop again and choose $\alpha=1$. Formally, this mechanism continues (instantaneously) until the payoff is attained. Then our problem reduces to a standard stopping problem with gain function $\psi$. These heuristics are confirmed in the next corollary.
\begin{corollary}\label{cor:F}
If $F(0)=1$ we have
\begin{align}\label{eq:vpsi}
v(x)=\sup_{\tau\in\cT}\E_x\left[e^{-r\tau}\psi(X_\tau)\right], \quad \text{for $x\in\R^d$}.
\end{align}
\end{corollary}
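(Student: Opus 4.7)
The plan is to exploit that $F(0)=1$ collapses the operator $\Lambda$ into a simple convex combination, and then to verify directly that the value $w(x):=\sup_{\tau\in\cT}\E_x[e^{-r\tau}\psi(X_\tau)]$ of the standard optimal stopping problem with gain $\psi$ is itself a solution of Problem 2. The uniqueness statement in Theorem \ref{thm:prob1} will then force $v=w$.

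First I would unpack \eqref{eq:Lambda} with $F=\delta_0$: for every continuous $f\ge 0$ one obtains $(\Lambda f)(x)=p\psi(x)+(1-p)f(x)$, so \eqref{eq:vtilde} becomes
\begin{align*}
v(x)=\sup_{\tau\in\cT}\E_x\bigl[e^{-r\tau}\max\{\varphi(X_\tau),\,p\psi(X_\tau)+(1-p)v(X_\tau)\}\bigr].
\end{align*}
Next I would check that $w\in\cA_d$. The growth bound $\|w\|_{\cA_d}\le\|\psi\|_{\cA_d}$ comes straight from Jensen's inequality and Assumption \ref{ass:sub}(i), along the same lines as the estimate \eqref{eq:cG0}. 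For continuity, one can recycle verbatim the arguments in Lemma \ref{lem:OS} and Lemma \ref{lem:usc-G}, since those proofs only require the gain function to be continuous, to belong to $\cA_d$, and to satisfy the integrability \eqref{eq:ui}, all of which hold for $\psi$. The same reasoning also produces the $\P_x$-supermartingale property of $t\mapsto e^{-rt}w(X_t)$, in analogy with \eqref{super}.

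Then I would derive the fixed-point equation for $w$ by a short sandwich. Taking $\tau\equiv 0$ in the definition of $w$ gives $w\ge\psi\ge\varphi$, so $G(x):=\max\{\varphi(x),\,p\psi(x)+(1-p)w(x)\}$ satisfies $\psi\le G\le w$ pointwise on $\R^d$. The lower bound yields $w(x)\le\sup_{\tau\in\cT}\E_x[e^{-r\tau}G(X_\tau)]$, while the upper bound, combined with the supermartingale property of $e^{-rt}w(X_t)$ and optional sampling in the spirit of \eqref{eq:Optsam}, gives
\begin{align*}
\sup_{\tau\in\cT}\E_x[e^{-r\tau}G(X_\tau)]\le\sup_{\tau\in\cT}\E_x[e^{-r\tau}w(X_\tau)]\le w(x).
\end{align*}
Hence $w$ solves \eqref{eq:vtilde}, and uniqueness in Theorem \ref{thm:prob1} forces $v=w$, which is the claim. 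The main obstacle is really bookkeeping in the continuity step for $w$: one must verify that the delicate estimates of Lemma \ref{lem:usc-G} still go through once the gain is $\psi$ instead of $\max\{\varphi,\Lambda f\}$. Once this is checked, the rest is the two-line sandwich above.
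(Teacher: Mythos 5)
Your argument is correct, but it takes a genuinely different route from the paper. You adopt a guess-and-verify strategy: define $w(x):=\sup_{\tau\in\cT}\E_x[e^{-r\tau}\psi(X_\tau)]$, check that $w\in\cA_d$, show via the sandwich $\psi\le\max\{\varphi,p\psi+(1-p)w\}\le w$ (valid, since $\tau\equiv 0$ gives $w\ge\psi\ge\varphi$) together with optional sampling for the non-negative supermartingale $t\mapsto e^{-rt}w(X_t)$ that $w$ solves \eqref{eq:vtilde} when $(\Lambda f)(x)=p\psi(x)+(1-p)f(x)$, and then invoke uniqueness of the fixed point from Theorem \ref{thm:prob1}. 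The paper instead never introduces a candidate: it works directly with the already-constructed $v$, notes that $v\ge\varphi$ and $\psi\ge\varphi$ force $\max\{\varphi,\Lambda v\}=p\psi+(1-p)v$ as in \eqref{eq:psi}, obtains $v\ge\psi$ by taking $\tau=0$ in \eqref{eq:vF} and hence $v\ge\sup_\tau\E_x[e^{-r\tau}\psi(X_\tau)]$ by monotonicity, and gets the reverse inequality by splitting the supremum, using the supermartingale property of $e^{-rt}v(X_t)$ on the term $(1-p)v$, and rearranging $v(x)\le p\sup_\tau\E_x[e^{-r\tau}\psi(X_\tau)]+(1-p)v(x)$. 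The trade-off is clear: the paper's route is shorter because all regularity and (super)martingale facts for $v$ are already in Theorem \ref{thm:prob1}, whereas your route must re-establish continuity, linear growth and the supermartingale property for the new object $w$; this does go through, since the estimates in Lemmas \ref{lem:OS} and \ref{lem:usc-G} and the bound \eqref{eq:ui} only use that the gain lies in $\cA_d$, and $\|w\|_{\cA_d}\le\|\psi\|_{\cA_d}$ follows from Assumption \ref{ass:sub}(i) as in \eqref{eq:cG0} and \eqref{eq:Optsam}, but it is genuine extra bookkeeping that the paper avoids. Both proofs rely in the end on the same two ingredients, the identity $\Lambda f=p\psi+(1-p)f$ when $F(0)=1$ and an optional-sampling/supermartingale argument; you apply them to the candidate $w$ plus uniqueness, the paper applies them to $v$ itself.
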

\begin{proof}
From Theorem \ref{thm:prob1} we know that $v$ is well defined and $v\ge \varphi$. Then by using that $F(0)=1$ and $\psi\ge \varphi$ we have
\begin{align}\label{eq:psi}
\max\{\varphi(x),(\Lambda v)(x)\}=\max\{\varphi(x),p\psi(x)+(1-p)v(x)\}=p\psi(x)+(1-p)v(x).
\end{align}
 Using \eqref{eq:psi} we get
\begin{align}\label{eq:vF}
v(x)=\sup_{\tau\in\cT}\E_x\left[e^{-r\tau}\left(p\psi(X_\tau)+(1-p)v(X_\tau)\right)\right]
\end{align}
and choosing $\tau=0$ we also obtain $v(x)\ge p\psi(x)+(1-p)v(x)$. Therefore $v\ge \psi$ and \eqref{eq:vF} gives
\begin{align}\label{eq:vge}
v(x)\ge \sup_{\tau\in\cT}\E_x\left[e^{-r\tau}\psi(X_\tau)\right].
\end{align}

For the reverse inequality we recall that $t\mapsto e^{-rt}v(X_t)$ is a $\P_x$-supermartingale (Theorem \ref{thm:prob1}), so that
\begin{align}
v(x)\le \sup_{\tau\in\cT}\E_x\left[e^{-r\tau}p\psi(X_\tau)\right]+(1-p)v(x).
\end{align}
Rearranging terms in the expression above and combining it with \eqref{eq:vge} leads to \eqref{eq:vpsi}.
\end{proof}

\begin{remark}\label{rem:flow}
It is worth noticing that the proof of Lemma \ref{lem:usc-G} does not use continuity of the flow $x\mapsto X^x_t$. We could have used the same arguments to prove that $\Lambda f\in C(\R^d)$ in Lemma \ref{lem:Lambda} and that $\Gamma f$ is lower semi-continuous in Lemma \ref{lem:OS}. Hence, Theorem \ref{thm:prob1} and Corollary \ref{cor:F} hold without the assumption of continuity of the flow $x\mapsto X^x_t$.
\end{remark}

\begin{remark}[Variational inequality]\label{rem:VI}
By continuity of the flow $x\mapsto X^x_t$, the process $X$ is a Feller process. Denoting its infinitesimal generator by $\cL$ we may formally expect the value function $v$ to be solution (in a suitable sense) of the variational inequality
\begin{equation}\label{eq:VI}
\max\big\{(\cL v- r v)(x),\max\{\varphi,\Lambda v\}(x)-v(x)\big\}=0, \quad x\in\R^d.
\end{equation}
In the infinite time-horizon problem we also need to add linear growth conditions at infinity (as we expect $v\in\cA_d$), whereas in the finite time-horizon problem we have the terminal condition \eqref{eq:vT}. In general, existence and regularity of a solution to the variational inequality above depend on the structure of the operator $\cL$. The problem is also challenging due to the non-local (recursive) nature of the operator $\Lambda$ (similar technical difficulties arise in HJB equations related to impulse control problems).

In Section \ref{sec:fbp} we show for a specific problem that indeed $v$ solves the variational inequality, written in the form of a free boundary problem.
\end{remark}

\section{Application to stock trading with the dark pool}\label{sec:appl}

In this section we discuss the application of the recursive optimal stopping problem to the problem of trading in different venues, introduced in Section \ref{sec:examples}. We consider a trader who wants to sell a certain number of shares of a stock, in a single transaction. At any (stopping) time the trader may decide to sell the whole inventory in the traditional market exchange or in a dark pool. Since we do not allow for partial execution, with no loss of generality we will later assume that the inventory consists of a single share\footnote{Equivalently, one may consider a discrete list of small orders to be liquidated according to a fixed sequence of transactions. This is a common  procedure among large traders, which is normally applied in order to reduce price impact arising from large selling orders, that typically push prices down (see, e.g. \cite[Ch. 6 and Ch. 7]{cartea2015algorithmic}).}.

The execution of orders in the two markets obeys different mechanisms and the sale prices are also different. In the standard exchange the order is certainly executed instantaneously, whereas in the dark pool orders are executed if a matching order arrives, i.e., only with some probability $p\in(0,1)$ and with a delay that may vary across different orders. This means that after a certain random time $\vartheta$, with probability $1-p$ the order is either not executed or cancelled by the trader.

We denote by $S=(S_t)_{t \geq 0}$ the (non-negative) bid price process. Sales in the standard exchange are subject to price impact and, in order to account for this feature, we say that the sale price of the stock in this market, at time $\tau$, is $\gamma S_\tau$ for some constant $\gamma\in (0,1]$. Since the trader is interested in a single sale for a fixed number of shares, the use of a fixed (proportional) price impact (given by $\gamma$) seems a reasonable choice that leads to a tractable model.

In the dark pool the stock can be sold at a more favourable price (typically the mid price between bid and ask) with no price impact.
Hence, we let $K=(K_t)_{t \geq 0}$ be a non-negative process representing a spread on the bid price. If an order placed in the dark pool at time $\tau$ is executed, the trader receives $S_{\tau+\vartheta}+K_{\tau+\vartheta}$ at time $\tau+\vartheta$. Alternatively, if the order {\em is not} executed (or cancelled) the trader must start her optimization afresh. The investor is therefore committed to dark pools for the entire random waiting time (Remark \ref{rem:theta}).

\subsection{Setting and reduction to one dimension}\label{sec:setting-ex}
Let $(\Omega, \cF, \P)$ be a probability space and consider two independent Brownian motions $(B^1_t)_{t\ge 0}$, $(B^2_t)_{t\ge 0}$. Let $\bF$ be the natural filtration generated by $B^1$ and $B^2$, completed with $\P$-null sets. We model the price process $S$ and the spread $K$ by correlated diffusions as follows:

\begin{align}
\label{eq:bid}&\ud S_t=\mu_1 S_t \ud t +\sigma_1 S_t \ud B^1_t,\:\:\:\qquad S_0=s>0,\\
\label{eq:spread}&\ud K_t=\mu_2K_t\ud t +\sigma_2 K_t (\nu \ud B^1_t+\sqrt{1-\nu^2} \ud B^2_t),\qquad K_0=k>0.
\end{align}
where $\mu_1, \mu_2\in\R$ and $\sigma_1,\sigma_2>0$ are constants and $\nu\in[-1,1]$.

The problem formulation corresponds to that of Section \ref{sec:infinite} where $X=(S,K)$, $\varphi(X)=\gamma S$, $\psi(X)=S+K$ and $\cT$ is unbounded. So equation \eqref{eq:value}, and its equivalent formulation given in equation \eqref{eq:vtilde}, read as
\begin{align}\label{eq:value2}
v(s,k)=&\sup_{(\tau, \alpha)\in \D}\E_{s,k}\bigg[e^{-r \tau} \gamma S_\tau \I_{\{\alpha=0\}}\! \\
&\qquad\qquad+ \!e^{-r (\tau+\vartheta) } \!  \left( p (S_{\tau+\vartheta}\!+\!K_{\tau+\vartheta})\!+\! (1-p) v(S_{\tau+\vartheta},K_{\tau+\vartheta})\right) \I_{\{\alpha=1\}} \bigg]\\
=&\sup_{\tau\in \cT}\E_{s,k}\left[e^{-r \tau} \max\left\{\gamma S_\tau,(\Lambda v)(S_\tau,K_\tau) \right\}\right].
\end{align}
In this setting, for any continuous function $f:\R^2_+\to\R_+$ we have
\begin{align}\label{eq:Lambdaf}
(\Lambda f)(s,k)=\int_0^\infty e^{-rt}\E_{s,k}\left[p(S_t+K_t)+(1-p)f(S_t,K_t)\right]F(\ud t)
\end{align}
and the second equality in \eqref{eq:value2} holds because of Lemma \ref{lem:value}.

Note that, in this example, the processes $S$ and $K$ are positive and our state space is $\R^2_+:=(0,\infty)^2$. Then, instead of working on the Banach space $\mathcal A_2$,  we can consider the space $\cA^+_2$ defined as in \eqref{eq:A} but with $\R^2_+$ in place of $\R^d$, i.e.
\begin{align}\label{eq:A2}
\cA^+_2:=\left\{
f: f\in C(\R^2_+;\R_+),\:\text{such that}\:\|f\|_{\cA^+_2}<\infty
\right\}.
\end{align}
with $\displaystyle \|f\|^2_{\cA_2^+}:=\sup_{x\in\R^2_+}\frac{|f(x)|^2}{1+|x|_2^2}.$

\begin{remark}\label{rem:spaceAprime}
Let $\tilde{r}=r(1-\rho)$. The process $\hat{X}=(e^{-2\tilde{r}}(1+S^2_t+K^2_t))_{t \ge 0}$ is a supermartingale if its It\^o dynamics contains a negative drift; that is, if
\[
-2\tilde{r}(1+S^2_t+K^2_t) +S^2_t(2\mu_1+\sigma_1^2) +K^2_t(2\mu_2+\sigma_2^2)\leq 0.
\]
Hence, a sufficient condition for $\hat{X}$ to be a supermartingale is $\tilde{r}\ge\mu_i+\frac{1}{2}\sigma_i^2$, $i=1,2$ and if $r>\mu_i+\frac{1}{2}\sigma_i^2$, $i=1,2$, we can find $\rho\in(0,1)$ such that (i) in Assumption \ref{ass:sub} holds. Moreover, this also guarantees that (ii) and (iii) of Assumption \ref{ass:sub} are fulfilled.

Notice that, due to the explicit form of the processes involved, one could repeat arguments as in Section \ref{sec:value} to prove that a fixed point can be found in the space
\begin{align}\label{eq:A2b}
\cA'_2:=\left\{f: f\in C(\R_+^2;\R_+),\:\text{such that}\:\|f\|_{\cA'_2}<\infty\right\},
\end{align}
with the norm
\[
\|f\|_{\cA'_2}:=\sup_{(s,k)\in\R_+^2}\frac{|f(s,k)|}{(1+s+k)},
\]
under weaker conditions than those in Assumption \ref{ass:sub}. In particular, it is sufficient to replace (i) of Assumption \ref{ass:sub} by the condition: $e^{-rt}(1+S^s_t+K^k_t)$ is a supermartingale. We will discuss this alternative approach in more detail in Section \ref{sec:comparison} and Appendix \ref{sec:appGBM}.
\end{remark}

In light of the above remark, and in order to avoid repetitions, here we simply assume that $r>\mu_i+\sigma^2_i/2$ for $i=1,2$ so that all results from Section \ref{sec:value} apply to the current setting. Moreover, with no loss of generality we take $\gamma=1$ in \eqref{eq:value2}, for notational simplicity. It will be clear that all results below also hold for any other $\gamma\in(0,1)$.

The problem stated in \eqref{eq:value2} has some interesting features. The first one is that the value function is homogeneous in $s$, as shown in the next lemma.
\begin{lemma}\label{lem:homo}
For all $(s,k)\in\R^2_+$ we have $v(s,k)=s\, v(1,k/s)$.
\end{lemma}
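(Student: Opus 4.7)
The plan is to leverage the scaling property of geometric Brownian motion together with the uniqueness of the fixed point established in Theorem \ref{thm:prob1}. Since $S$ and $K$ solve linear SDEs with multiplicative coefficients driven by the common Brownian motions $(B^1,B^2)$, they satisfy the pathwise identities $S^{\lambda s}_t=\lambda S^s_t$ and $K^{\lambda k}_t=\lambda K^k_t$ for every $\lambda>0$, $\P$-a.s. Consequently, the whole optimisation is invariant under the simultaneous rescaling $(s,k)\mapsto (\lambda s,\lambda k)$, provided that $v$ itself is positively $1$-homogeneous. I would therefore prove the lemma by showing that $v$ is $1$-homogeneous; evaluating the resulting identity $v(\lambda s,\lambda k)=\lambda v(s,k)$ at $\lambda=1/s$ then yields the claim.

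First, I would introduce the set
\[
\mathcal{H}:=\{f\in\cA^+_2 : f(\lambda s,\lambda k)=\lambda f(s,k)\ \text{for all}\ \lambda>0,\ (s,k)\in\R^2_+\}.
\]
Since $\|\cdot\|_{\cA^+_2}$-convergence implies pointwise convergence, the homogeneity relation passes to the limit, so $\mathcal{H}$ is a closed subset of the Banach space $\cA^+_2$; it is also non-empty (for instance $f(s,k)=s$ lies in $\mathcal{H}$). Next, for $f\in\mathcal{H}$ I would verify that $\Gamma f\in\mathcal{H}$. Using the pathwise scaling above, the definition \eqref{eq:Lambdaf}, the linearity of $p(S_t+K_t)$ in the initial condition, and the homogeneity of $f$, one gets $(\Lambda f)(\lambda s,\lambda k)=\lambda(\Lambda f)(s,k)$. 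Because $\varphi(s,k)=s$ also lies in $\mathcal{H}$ (recall that $\gamma=1$), the map $(s,k)\mapsto\max\{\varphi(s,k),(\Lambda f)(s,k)\}$ is $1$-homogeneous. Since the class of admissible stopping times in \eqref{eq:gamma} depends on the filtration only and not on the starting point, the factor $\lambda$ moves outside both the expectation and the supremum, yielding $(\Gamma f)(\lambda s,\lambda k)=\lambda(\Gamma f)(s,k)$, i.e.~$\Gamma f\in\mathcal{H}$.

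Finally, picking any $f_0\in\mathcal{H}$ (for instance $f_0\equiv 0$), the iterates $f_n:=\Gamma^n f_0$ all belong to $\mathcal{H}$ by the previous step, while Theorem \ref{thm:prob1} guarantees that $f_n\to v$ in $\|\cdot\|_{\cA^+_2}$ as $n\to\infty$. Closedness of $\mathcal{H}$ then forces $v\in\mathcal{H}$, and the desired identity follows upon taking $\lambda=1/s$. The only subtle point is the invariance $\Gamma(\mathcal{H})\subset\mathcal{H}$: the non-local operator $\Lambda$ might a priori spoil scaling, but the joint linearity of the SDE coefficients and of the payoff functions $\varphi,\psi$ makes the computation go through cleanly.
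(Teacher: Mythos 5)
Your proposal is correct and follows essentially the same route as the paper: both arguments exploit the pathwise scaling of the geometric Brownian motions and the linearity of the payoffs to show that the operator $\Gamma$ preserves $1$-homogeneity, and then transfer this property to $v$ via the fixed-point iteration $f_{n+1}=\Gamma f_n$ and convergence in $\cA^+_2$ (the paper phrases this as an induction with a homogeneous $f_0$, you as invariance of a closed subset $\mathcal{H}$, which is the same argument). No gaps to report.
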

\begin{proof}
Since $v$ is the unique fixed point of the operator $\Gamma$ defined in \eqref{eq:gamma}, for any $f_0\in\cA^+_2$, setting $f_{n+1}=(\Gamma f_n)$ for $n\ge 0$, we have
\begin{align}\label{eq:Gfn}
v=\lim_{n\to\infty}(\Gamma f_n),
\end{align}
where the limit is taken in $\cA^+_2$. Therefore, homogeneity of $v$ in the $s$ variable holds if such property is satisfied by $f_n$, for every $n \in \mathbb{N}$.

We proceed by induction and assume that $f_n$ is homogeneous in $s$, i.e.~$f_n(s,k)=s f_n(1,k/s)$. Since $S^s_t=s\,S^1_t$ and $K^{k/s}_t=s^{-1}K^k_t$ we obtain
\begin{align}\label{eq:homo0}
(\Lambda f_n)(s,k)\!=\!s\! \int_0^\infty\!\!\! e^{-rt}\E\left[p\left(S^1_t\!+\!K^{k/s}_t\right)\!+\!(1\!-\!p)f_n\left(S^1,K^{k/s}_t\right)\right]\!F(\ud t)=s(\Lambda f_n)(1,k/s).
\end{align}
Therefore
\begin{align}\label{eq:fn+1}
f_{n+1}(s,k)=&(\Gamma f_n)(s,k)=\sup_{\tau\in \cT}\E\left[e^{-r\tau}\max\{S^s_{\tau}, (\Lambda f_n)(S^s_\tau, K^k_\tau)\}\right]\\
=&\sup_{\tau\in \cT}\E\left[e^{-r\tau}\max\{s S^1_{\tau}, s(\Lambda f_n)(S^1_\tau, K^{k/s}_\tau)\}\right]=sf_{n+1}(1,k/s).
\end{align}
Hence, $f_{n+1}$ is also homogeneous in the $s$ variable, which concludes the proof thanks to \eqref{eq:Gfn}.
\end{proof}

In the next proposition, we use Lemma \ref{lem:homo} and the dynamics of $S$ and $K$ (see \eqref{eq:bid}--\eqref{eq:spread}) to reduce the dimension of the state space. For this let $\beta_1:=\sigma_2\nu-\sigma_1$ and $\beta_2:=\sigma_2\sqrt{1-\nu^2}$ and consider a process $Z$ defined as the unique strong solution of
\begin{align}\label{eq:Z3}
\ud Z^z_t = (\mu_2-\mu_1) {Z^z_t} \ud t+ \sqrt{\beta^2_1+\beta^2_2} {Z^z_t} \, \ud \tilde B_t,
\end{align}
with initial condition $Z^z_0=z>0$, where $\tilde B:=(\tilde B_t)_{t\ge 0}$ is the $\P$-Brownian motion given by
\[
\widetilde B_t=\frac{\beta_1 B^1_t}{\sqrt{\beta_1^2+\beta_2^2}}+\frac{\beta_2 B^2_t}{\sqrt{\beta_1^2+\beta_2^2}}\quad\text{for $t \geq 0$.}
\]

Then we also introduce the operator
\begin{align}\label{eq:Pi}
(\Pi g)(z):= \int_0^\infty e^{-(r-\mu_1)t}\E_z\left[p(1+Z_t)+(1-p)g(Z_t)\right]F(\ud t),
\end{align}
for any $g\in \cA^+_1$, where $\cA^+_1$ is defined as in \eqref{eq:A2} but replacing $\R^2_+$ by $\R_+$. The operator $\Pi$  plays the role of the operator $\Lambda$ from \eqref{eq:Lambda} but in the one dimensional setting.

Similarly to  \eqref{eq:gamma}, for any $g\in\cA^+_1$ we also define the operator $\tilde \Gamma$
\begin{align}\label{eq:ug}
(\tilde\Gamma g)(z):=\sup_{\tau\in \cT}\E_z\left[e^{-(r-\mu_1)\tau}\max\{1,(\Pi g)(Z_\tau)\}\right].
\end{align}
Since $r>\mu_1+\sigma^2_1/2$, it would not be difficult to adapt the proofs from the previous sections to show that $\tilde\Gamma$ admits a unique fixed point in $\cA^+_1$. However, we follow a slightly different line of arguments.

In Proposition \ref{prop:dim} below we formulate an optimal stopping problem equivalent to \eqref{eq:value2} in the reduced state space. The proof requires the following preliminary lemma.
\begin{lemma}\label{lem:Girs}
Fix a deterministic $T>0$ and define a probability measure $\Q$ on $\cF_T$ with density
\begin{align}\label{eq:Girs}
\frac{\ud \Q}{\ud \P}\Big|_{\cF_T}:=e^{\sigma_1 B^1_T-\frac{\sigma_1^2}{2}T}=S_Te^{-\mu_1T}.
\end{align}
Let $\hat Z$ be defined as $\hat Z_t:=K_t/S_t$, for $t\ge 0$. Then, recalling $Z$ in equation \eqref{eq:Z3}, we have
\begin{align}\label{laws}
\mathsf{Law}\left((\hat Z_t)_{t\in[0,T]}\big|\Q\right)=\mathsf{Law}\left((Z_t)_{t\in[0,T]}\big|\P\right).
\end{align}
\end{lemma}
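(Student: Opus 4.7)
The plan is to verify the claim by a standard Girsanov-plus-It\^o computation, showing that the SDE satisfied by $\widehat Z$ under $\Q$ is identical to the SDE satisfied by $Z$ under $\P$, and then invoking weak uniqueness.

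First I would check that $\Q$ is a well-defined probability measure on $\cF_T$: since the exponent $\sigma_1 B^1_T-\frac{1}{2}\sigma_1^2 T$ is the value at time $T$ of a Dol\'eans-Dade exponential of the constant integrand $\sigma_1$, the Novikov condition is trivially satisfied, and the second equality in \eqref{eq:Girs} follows from the explicit form of $S_T=s\exp((\mu_1-\sigma_1^2/2)T+\sigma_1 B^1_T)$ (and is to be read with $s=1$, coherent with the reduction from Lemma \ref{lem:homo}). By Girsanov's theorem,
\[
W^1_t:=B^1_t-\sigma_1 t,\qquad t\in[0,T],
\]
is a $\Q$-Brownian motion; moreover $B^2$ remains a $\Q$-Brownian motion and stays independent of $W^1$ because $B^1$ and $B^2$ are $\P$-independent.

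Next I would apply It\^o's formula to $\widehat Z_t=K_t/S_t$ under $\P$. A direct computation using $\ud(1/S_t)=(\sigma_1^2-\mu_1)S_t^{-1}\ud t-\sigma_1 S_t^{-1}\ud B^1_t$ and the quadratic covariation $\ud K_t\,\ud(1/S_t)=-\sigma_1\sigma_2\nu\,\widehat Z_t\,\ud t$ gives
\begin{align*}
\frac{\ud \widehat Z_t}{\widehat Z_t}=\bigl[(\mu_2-\mu_1)-\sigma_1\beta_1\bigr]\ud t+\beta_1\ud B^1_t+\beta_2\ud B^2_t,
\end{align*}
where I have used the algebraic identity $\sigma_1^2-\sigma_1\sigma_2\nu=-\sigma_1\beta_1$. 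Substituting $\ud B^1_t=\ud W^1_t+\sigma_1\ud t$, the drift correction $+\sigma_1\beta_1\,\ud t$ exactly cancels the $-\sigma_1\beta_1\,\ud t$ term, leaving
\begin{align*}
\frac{\ud \widehat Z_t}{\widehat Z_t}=(\mu_2-\mu_1)\ud t+\beta_1\ud W^1_t+\beta_2\ud B^2_t,\qquad t\in[0,T],\quad \Q\text{-a.s.}
\end{align*}

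To conclude, I would combine the two independent $\Q$-Brownian motions $W^1$ and $B^2$ into a single one by setting
\[
\widetilde W_t:=\frac{\beta_1 W^1_t+\beta_2 B^2_t}{\sqrt{\beta_1^2+\beta_2^2}},
\]
which is a one-dimensional $\Q$-Brownian motion by L\'evy's characterisation. Then
\[
\ud\widehat Z_t=(\mu_2-\mu_1)\widehat Z_t\,\ud t+\sqrt{\beta_1^2+\beta_2^2}\,\widehat Z_t\,\ud\widetilde W_t,\qquad \widehat Z_0=k/s,
\]
which under $\Q$ is the same SDE as \eqref{eq:Z3} driven under $\P$ by the Brownian motion $\widetilde B$, with the same initial condition $z=k/s$. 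Since this is a geometric Brownian motion, it admits a unique strong (hence weak) solution, so the laws of $(\widehat Z_t)_{t\in[0,T]}$ under $\Q$ and of $(Z_t)_{t\in[0,T]}$ under $\P$ coincide, proving \eqref{laws}.

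The only genuinely delicate point is the cancellation of the drift correction coming from Girsanov with the It\^o covariation between $K$ and $1/S$; once the identity $\sigma_1^2-\sigma_1\sigma_2\nu=-\sigma_1\beta_1$ is spotted, the argument is entirely routine.
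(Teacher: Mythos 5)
Your proposal is correct and follows essentially the same route as the paper's own proof: Girsanov's theorem to make $B^1_t-\sigma_1 t$ a $\Q$-Brownian motion, It\^o's formula applied to $\hat Z=K/S$ to identify its $\Q$-dynamics with the SDE \eqref{eq:Z3}, and then uniqueness in law for the resulting geometric Brownian motion. Your write-up is merely more explicit about the drift cancellation, the independence of $B^2$ under $\Q$, and the L\'evy characterisation step, all of which the paper leaves implicit.
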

\begin{proof}
The measure $\Q$ in \eqref{eq:Girs} is equivalent to $\P$ on $\cF_T$ and by Girsanov Theorem $B^\Q _t:=B^1_t-\sigma_1 t$ is a $\Q$-Brownian motion for every $t\in[0,T]$.
By applying It\^o formula to $\hat Z_t:=K_t/S_t$ we get that the dynamics of $\hat Z$ under $\Q$ is
\begin{align}\label{eq:Z2}
\frac{\ud \hat Z^z_t}{\hat Z^z_t} =(\mu_2-\mu_1)\ud t+ \beta_1 \ud B^\Q_t +\beta_2 \ud B^2_t=(\mu_2-\mu_1)\ud t+ \sqrt{\beta_1^2+\beta_2^2} \ud \widetilde B^\Q_t
\end{align}
where $\beta_1$ and $\beta_2$ are as in \eqref{eq:Z3} and $\widetilde B^\Q$ is the $\Q$-Brownian motion given by
\[
\widetilde B^\Q_t=\frac{\beta_1 B^\Q_t}{\sqrt{\beta_1^2+\beta_2^2}}+\frac{\beta_2 B^2_t}{\sqrt{\beta_1^2+\beta_2^2}},\quad\text{for $t \in [0,T]$}.
\]
Comparing \eqref{eq:Z2} to equation \eqref{eq:Z3}, it is clear that $\hat Z$ under $\Q$ has the same law as $Z$ under $\P$, which concludes the proof.
\end{proof}

\begin{proposition}\label{prop:dim}
For $(k,s)\in\R_+^2$ let $z=k/s$ and set $u(z):= v(1,k/s)$. Then, $u\in\cA^+_1$ and it is the unique solution to $u=(\tilde \Gamma u)$.
\end{proposition}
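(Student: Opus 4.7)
The plan is to substitute the homogeneity identity $v(s,k)=s\,u(k/s)$ from Lemma \ref{lem:homo} into the fixed-point equation \eqref{eq:value2} and then use the measure change of Lemma \ref{lem:Girs} to collapse the two-dimensional problem into a one-dimensional one for $u$, after which uniqueness is inherited from the uniqueness of $v$ in Theorem \ref{thm:prob1}.

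First, that $u\in\cA^+_1$ is immediate: since $v\in\cA^+_2$, we have $|u(z)|=|v(1,z)|\le \|v\|_{\cA^+_2}\sqrt{1+z^2}\le \|v\|_{\cA^+_2}(1+z)$ for $z>0$, and continuity of $u$ follows from that of $v$. Next, using $v(s,k)=s\,u(k/s)$ inside the definition \eqref{eq:Lambdaf} of $\Lambda v$, we factor out $S_t$ and rewrite
\begin{align*}
(\Lambda v)(s,k)=\int_0^\infty\! e^{-rt}\E_{s,k}\!\left[S_t\Big(p(1+K_t/S_t)+(1-p)u(K_t/S_t)\Big)\right]\!F(\ud t).
\end{align*}
For each fixed $T\ge t$ the Radon-Nikodym derivative $\ud\Q/\ud\P|_{\cF_T}=S_T e^{-\mu_1 T}/s$ of Lemma \ref{lem:Girs} (normalized by $s$) is a true martingale, so the tower property together with Lemma \ref{lem:Girs} gives $\E_{s,k}[S_t g(K_t/S_t)]=s e^{\mu_1 t}\E_z[g(Z_t)]$ for bounded continuous $g$ and $z=k/s$; an approximation argument using the sublinear growth of $u$ and the moment bounds implied by $r>\mu_1+\tfrac{1}{2}\sigma_1^2$ extends this to $g(z)=p(1+z)+(1-p)u(z)$. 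This yields $(\Lambda v)(s,k)=s(\Pi u)(z)$.

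Plugging this back into \eqref{eq:value2} and factoring $S_\tau$ out of the $\max$ (both arguments are proportional to $s$ and hence to $S_\tau/s$ inside the expectation) we obtain
\begin{align*}
v(s,k)=\sup_{\tau\in\cT}\E_{s,k}\!\left[e^{-r\tau}S_\tau\max\{1,(\Pi u)(K_\tau/S_\tau)\}\right].
\end{align*}
Applying the same measure change to the outer expectation (first along a localizing sequence $\tau\wedge T$, then sending $T\to\infty$ via monotone/dominated convergence based on the supermartingale property of $\hat X$ in Assumption \ref{ass:sub}(i)) converts the $\P_{s,k}$-expectation of $e^{-r\tau}S_\tau(\cdot)$ into $s$ times a $\Q$-expectation of $e^{-(r-\mu_1)\tau}(\cdot)$, which by Lemma \ref{lem:Girs} equals $s\,\E_z[e^{-(r-\mu_1)\tau}\max\{1,(\Pi u)(Z_\tau)\}]$. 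Taking the supremum over $\tau$ gives $v(s,k)=s(\tilde\Gamma u)(z)$; comparing with $v(s,k)=s\,u(z)$ yields $u=\tilde\Gamma u$.

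For uniqueness, suppose $\tilde u\in\cA^+_1$ also satisfies $\tilde u=\tilde\Gamma \tilde u$. Define $\tilde v(s,k):=s\,\tilde u(k/s)$, which lies in $\cA^+_2$ by the same bound used for $u$. Reversing the measure-change computation above (now starting from $\tilde u=\tilde\Gamma\tilde u$ and using that $\ud\P/\ud\Q$ gives back $\E_{s,k}$-expectations involving $S_\tau$) shows that $\tilde v$ is a fixed point of $\Gamma$ on $\cA^+_2$. Since Theorem \ref{thm:prob1} guarantees uniqueness of such a fixed point, $\tilde v=v$, and therefore $\tilde u=u$. The main technical obstacle is the careful localization needed to push the Girsanov change of measure past potentially unbounded stopping times, which is handled by the exponential integrability granted by $r>\mu_1+\tfrac{1}{2}\sigma_1^2$.
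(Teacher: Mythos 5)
Your proposal is correct and follows essentially the same route as the paper: homogeneity from Lemma \ref{lem:homo}, the Girsanov change of measure of Lemma \ref{lem:Girs} to get $(\Lambda v)(s,k)=s(\Pi u)(k/s)$ and then $v(s,k)=s(\tilde\Gamma u)(k/s)$ via truncation at $\tau\wedge T$ and a limiting argument, with uniqueness inherited from the uniqueness of the fixed point $v$ in Theorem \ref{thm:prob1}. Your explicit reversal step (mapping a putative fixed point $\tilde u$ to $\tilde v(s,k)=s\,\tilde u(k/s)$ and showing it is a fixed point of $\Gamma$) is exactly what the paper's one-line uniqueness claim amounts to.
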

\begin{proof}
We first observe that $v\in\cA^+_2$ and Assumption \ref{ass:sub} imply $u\in\cA^+_1$. Then, we need to show that $u$ is the unique solution of $u=(\tilde \Gamma u)$. The idea is to use a change of measure argument but we need some care, due to possibly infinite stopping times.

For any fixed $T>0$, the law of $\hat Z$ under $\Q$ is the same as the law of $Z$ under $\P$ on the interval $[0,T]$, by  Lemma  \ref{lem:Girs}. Then,
thanks to Lemma \ref{lem:homo} and using \eqref{eq:Girs} and the explicit solution of \eqref{eq:bid}--\eqref{eq:spread}, for each $T>0$ we have
\begin{align}\label{eq:u00}
(\Lambda^T v)(s,k):=&\int_0^T e^{-rt}\E_{s,k}\left[p(S_t+K_t)+(1-p)v(S_t,K_t)\right]F(\ud t)\\
=&\int_0^T e^{-rt}\E\left[S^s_tp(1+\hat Z^z_t)+(1-p)S^s_tv(1,\hat Z^z_t)\right]F(\ud t)\\
\!=&\,s\! \int_0^T\!\!\! e^{-(r-\mu_1)t}\E^\Q\left[p(1\!+\!\hat Z^{z}_t)\!+\!(1\!-\!p)v(1,\hat Z^{z}_t)\right]\!F(\ud t)\\
\!=&\,s\! \int_0^T\!\!\! e^{-(r-\mu_1)t}\E\left[p\left(1\!+\! Z^{z}_t\right)\!+\!(1\!-\!p)u\left(Z^{z}_t\right)\right]\!F(\ud t),
\end{align}
where in the final equation we used \eqref{laws} and that $u(z)=v(1,z)$ by definition. Then, we have $(\Lambda^T v)(s,k)=s(\Pi^T u)(z)$ with
\begin{align}
(\Pi^T u)(z):= \int_0^T\!\!\! e^{-(r-\mu_1)t}\E\left[p\left(1\!+\! Z^{z}_t\right)\!+\!(1\!-\!p)u(Z^{z}_t)\right]\!F(\ud t).
\end{align}
Now, taking limits as $T\to\infty$ we obtain
\begin{align}\label{eq:u0}
(\Lambda v)(s,k)=\lim_{T\to\infty}(\Lambda^T v)(s,k)=s\lim_{T\to\infty}(\Pi^T u)(z)=s(\Pi u)(z).
\end{align}

Plugging \eqref{eq:u0} into \eqref{eq:value2} we get
\begin{align}\label{eq:u10}
v(s,k)=&\sup_{\tau\in \mathcal T}\E\left[e^{-r\tau}\max\{S^s_\tau,S^s_\tau (\Pi u)(\hat Z^z_\tau)\}\right].
\end{align}
For each $T>0$ we define
\begin{align}\label{eq:u11}
v^T(s,k):=&\sup_{\tau\in \cT}\E\left[e^{-r(\tau\wedge T)}\max\{S^s_{\tau\wedge T},S^s_{\tau\wedge T} (\Pi u)(\hat Z^z_{\tau\wedge T})\}\right]\\
=& s\sup_{\tau\in \cT}\E^\Q\left[e^{-(r-\mu_1)(\tau\wedge T)}\max\{1,(\Pi u)(\hat Z^z_{\tau\wedge T})\}\right]\notag\\
=& s\sup_{\tau\in \cT}\E\left[e^{-(r-\mu_1)(\tau\wedge T)}\max\{1,(\Pi u)(Z^z_{\tau\wedge T})\}\right],\notag
\end{align}
where the first equality comes from \eqref{eq:u0}, the second one from the change of measure and the final one from \eqref{laws}. Recalling \eqref{eq:ug}, it is natural to set
\begin{align}
(\tilde\Gamma^T u)(z):=\sup_{\tau\in \cT}\E\left[e^{-(r-\mu_1)(\tau\wedge T)}\max\{1,(\Pi u)(Z^z_{\tau\wedge T})\}\right],
\end{align}
so that \eqref{eq:u11} reads
\begin{align}\label{eq:vTGT}
v^T(s,k)=s\,(\tilde\Gamma^T u)(z).
\end{align}

Next, we want to prove that
\begin{align}\label{eq:limT}
\lim_{T\to\infty}v^T(s,k)=v(s,k)\quad\text{and}\quad\lim_{T\to\infty}(\tilde\Gamma^T u)(z)=(\tilde\Gamma u)(z).
\end{align}
We give the full argument of \eqref{eq:limT} for $(\tilde\Gamma^T u)$ as the computations for $v^T$ are analogous.

First, $(\tilde\Gamma^T u)\le(\tilde\Gamma u)$ on $\R_+$ since stopping times in \eqref{eq:u11} are bounded by $T$.  Second, $T\mapsto (\tilde\Gamma^T u)$ is increasing as the set of admissible times increases.
Then
\begin{align}\label{limup}
\lim_{T\to\infty}(\tilde\Gamma^T u)(z)\le(\tilde\Gamma u)(z),\qquad \text{for $z\in\R_+$}.
\end{align}
For the reverse inequality we notice that, for any stopping time $\tau$, Fatou's lemma and continuity of the gain process give
\begin{align}\label{limdown}
&\E\left[e^{-(r-\mu_1)\tau}\max\{1,(\Pi u)(Z^z_{\tau})\}\right]\\
&\le \liminf_{T\to\infty}\E\left[e^{-(r-\mu_1)(\tau\wedge T)}\max\{1,(\Pi u)(Z^z_{\tau\wedge T})\}\right]\le \liminf_{T\to\infty}(\tilde\Gamma^T u)(z),\qquad \text{for $z>0$}.
\end{align}
Hence, \eqref{limup} and \eqref{limdown} imply \eqref{eq:limT}. Taking limits in \eqref{eq:vTGT} and using \eqref{eq:limT} we obtain
\begin{align}\label{eq:u1}
v(s,k)=s\, (\tilde\Gamma u)(z).
\end{align}

Finally, from \eqref{eq:u1} and Lemma \ref{lem:homo} we obtain
\[
(\tilde\Gamma u)(z)=s^{-1}v(s,k)=v(1,k/s)=u(z).
\]
Uniqueness of the fixed point for $u$ follows from uniqueness of the fixed point for $v$.
\end{proof}

Thanks to Proposition \ref{prop:dim}, we know that the recursive stopping problem
\begin{align}\label{eq:u}
u(z)=\sup_{\tau\in \cT}\E_z\left[e^{-(r-\mu_1)\tau}\max\{1,(\Pi u)(Z_\tau)\}\right],\quad z\in\R_+,
\end{align}
is well-posed and, recalling also Theorem \ref{thm:prob1}, we obtain a simple corollary.

\begin{corollary}\label{cor:OS-u}
The stopping time
\begin{align}\label{eq:tauhat}
\hat \tau:=\inf\left\{t\ge 0\,:\, u(Z_t)=\max[1,(\Pi u)(Z_t)]\right\}
\end{align}
is optimal for \eqref{eq:u}. Moreover, the process
\[
t\mapsto e^{-(r-\mu_1)t}u(Z_t),\qquad t\in[0,\infty]
\]
is a continuous (non-negative) $\P_z$-supermartingale and the process
\[
t\mapsto e^{-(r-\mu_1)(t\wedge\hat \tau)}u(Z_{t\wedge\hat\tau}),\qquad t\in[0,\infty)
\]
is a continuous (non-negative) $\P_z$-martingale, for any $z\in\R_+$.
\end{corollary}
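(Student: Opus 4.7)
The corollary is a one-dimensional transcription of Theorem~\ref{thm:prob1}. Since Proposition~\ref{prop:dim} already delivers existence and uniqueness of $u\in\cA_1^+$ with $u=\tilde\Gamma u$, the only work is to identify an optimal stopping time and verify the (super)martingale statements in the reduced state space. My plan is to apply Lemma~\ref{lem:OS} with $f=u$ in the one-dimensional framework, where the role of $X$ is played by $Z$, the discount $r$ by $\tilde r:=r-\mu_1$, the gain $\varphi$ by the constant $1$, and the operator $\Lambda$ by $\Pi$ (with the implicit $\psi(z)=1+z$).

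The preliminary step is to check that $Z$ and $\tilde r$ satisfy the hypotheses underlying Lemma~\ref{lem:OS} (i.e.\ a 1-d analogue of Assumption~\ref{ass:sub}). Continuity of the stochastic flow $z\mapsto Z^z$ and quasi left-continuity are immediate from the explicit geometric Brownian motion representation \eqref{eq:Z3}; conditions (ii)-(iii) in Assumption~\ref{ass:sub} reduce to standard $L^p$-bounds and maximal inequalities for GBM and present no difficulty. The main delicate point is the supermartingale condition (i): under the standing assumption $r>\mu_i+\sigma_i^2/2$ ($i=1,2$) alone, the process $e^{-2\tilde r(1-\rho)t}(1+Z_t^2)$ need not be a supermartingale for any $\rho\in(0,1)$. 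As noted in Remark~\ref{rem:spaceAprime}, this is circumvented by working in the alternative space $\cA'_1$ endowed with the norm $\sup (1+z)^{-1}|f(z)|$, in which the required supermartingale property reduces to $e^{-\tilde r t}(1+Z_t)$ being a supermartingale; and the latter holds as soon as $\tilde r\ge \max\{0,\mu_2-\mu_1\}$, which is a direct consequence of the standing assumption. The arguments of Section~\ref{sec:value} (Lemmas~\ref{lem:Lambda}-\ref{lem:usc-G} and the contraction estimate in the proof of Theorem~\ref{thm:prob1}) go through verbatim in $\cA'_1$.

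With this variant of Lemma~\ref{lem:OS} in hand and applied with $f=u$, one obtains that $\hat\tau$ in \eqref{eq:tauhat} is optimal for $\tilde\Gamma u$, that $t\mapsto e^{-(r-\mu_1)t}(\tilde\Gamma u)(Z_t)=e^{-(r-\mu_1)t}u(Z_t)$ is a right-continuous non-negative $\P_z$-supermartingale on $[0,\infty]$, and that the stopped process $t\mapsto e^{-(r-\mu_1)(t\wedge\hat\tau)}u(Z_{t\wedge\hat\tau})$ is a right-continuous non-negative $\P_z$-martingale on $[0,\infty)$. Right-continuity is upgraded to continuity because the paths of $Z$ are continuous and $u\in\cA_1^+\subset C(\R_+;\R_+)$.

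The genuine obstacle, as indicated, is purely technical: recording the (mild) adaptation of the fixed-point and stopping-time analysis of Section~\ref{sec:value} to the space $\cA'_1$, so that the standing assumption $r>\mu_i+\sigma_i^2/2$ is sufficient. An alternative (and essentially equivalent) route would bypass this by transporting the conclusions of Theorem~\ref{thm:prob1} for $v$ to $u$ using the identity $v(s,k)=s\,u(k/s)$ from Proposition~\ref{prop:dim} together with the Girsanov change of measure of Lemma~\ref{lem:Girs}, localising on $[0,T]$ and letting $T\to\infty$ as in the proof of Proposition~\ref{prop:dim}; in that approach, the supermartingale property of $e^{-rt}v(X_t)$ under $\P_{s,k}$ transfers, under $\Q$ and then under $\P$ via \eqref{laws}, to the supermartingale property of $e^{-(r-\mu_1)t}u(Z_t)$, and analogously for the stopped martingale.
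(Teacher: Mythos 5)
Your proposal is correct and takes essentially the approach the paper intends: Corollary \ref{cor:OS-u} is read off by applying Lemma \ref{lem:OS} (equivalently the relevant part of Theorem \ref{thm:prob1}) with $f=u$ to the one-dimensional problem, which is legitimate once Proposition \ref{prop:dim} gives $u=\tilde\Gamma u$, and your Girsanov-transfer alternative matches how Proposition \ref{prop:dim} itself was proved. Your observation that condition (i) of Assumption \ref{ass:sub} (quadratic weight) need not hold for $Z$ with discount $r-\mu_1$ under $r>\mu_i+\sigma_i^2/2$, so that one should instead work with the linear-growth norm of Remark \ref{rem:spaceAprime} — where in fact only continuity of $\Pi u$ and $\E_z\big[\sup_{t\ge 0}e^{-(r-\mu_1)t}(1+Z_t)\big]<\infty$ (true since $r>\mu_2$) are needed — is a valid refinement of the paper's terse justification.
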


Let us choose $f_0\in\cA^+_2$ such that $f_0(s,k)=s f_0(1,k/s)$. For $n\ge 0$, set $f_{n+1}=(\Gamma f_n)$ and $g_n(z):=f_n(1,z)$ for $z\in\R_+$. We can easily check that $g_n\in\cA^+_1$ since $f_n\in\cA^+_2$. Moreover, \eqref{eq:fn+1} implies that $f_{n+1}(s,k)=s f_{n+1}(1,k/s)=sg_{n+1}(z)$. Hence,
repeating the argument of proof of Proposition \ref{prop:dim}, we obtain
\begin{align}\label{eq:un0}
sg_{n+1}(z)=f_{n+1}(s,k)=(\Gamma f_{n})(s,k)=s \sup_{\tau\in \cT}\E\left[e^{-(r-\mu_1)\tau}\max\{1,(\Pi g_n)(Z^z_\tau)\}\right].
\end{align}

The next is a simple corollary of \eqref{eq:un0} and of the fact that $\Gamma$ (and hence $\tilde \Gamma$) is a contraction.
\begin{corollary}\label{cor:u}
Let $g_0\in\cA_1^+$ be arbitrary and define $g_{n+1}:=\tilde\Gamma g_n$ for $n\ge 0$. Then
\[
u=\lim_{n\to\infty}\tilde\Gamma g_n,
\]
where the limit is taken in $\cA^+_1$.
\end{corollary}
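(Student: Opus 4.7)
The plan is to reduce the convergence of $\tilde\Gamma$-iterates in $\cA_1^+$ to the contraction property of $\Gamma$ in $\cA_2^+$ already established in the proof of Theorem \ref{thm:prob1}, by lifting each $g_n$ to a function on $\R_+^2$ that is homogeneous of degree one in $s$, and then pulling back the convergence.

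Given an arbitrary $g_0 \in \cA_1^+$, I would define $f_0(s,k) := s\,g_0(k/s)$ on $\R_+^2$. Continuity is immediate and a short supremum--norm estimate gives $|f_0(s,k)|^2 \le s^2 \|g_0\|_{\cA_1^+}^2 (1+k^2/s^2) \le \|g_0\|_{\cA_1^+}^2 (1+s^2+k^2)$, so $\|f_0\|_{\cA_2^+} \le \|g_0\|_{\cA_1^+} < \infty$ and hence $f_0 \in \cA_2^+$, homogeneous of degree one in $s$ by construction. Setting $f_{n+1}:=\Gamma f_n$, the computation leading to \eqref{eq:un0} (together with \eqref{eq:fn+1}) shows by induction that the homogeneity is preserved along $\Gamma$-iterations and that $f_n(s,k) = s\,g_n(k/s)$ for every $n \ge 0$; in particular $g_n(z) = f_n(1,z)$.

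By the contraction of $\Gamma$ on $(\cA_2^+,\|\cdot\|_{\cA_2^+})$ with unique fixed point $v$ (Theorem \ref{thm:prob1}), we have $\|f_n - v\|_{\cA_2^+}\to 0$. To transfer this convergence back to $\cA_1^+$, I would use $v(s,k) = s\,u(k/s)$, which follows from Lemma \ref{lem:homo} combined with Proposition \ref{prop:dim}. Evaluating at $s=1$, $k=z$ gives
\begin{equation*}
|g_n(z) - u(z)|^2 = |f_n(1,z) - v(1,z)|^2 \le (2+z^2)\,\|f_n - v\|_{\cA_2^+}^2,
\end{equation*}
so dividing by $1+z^2$ and taking the supremum in $z\ge 0$ yields
\begin{equation*}
\|g_n - u\|_{\cA_1^+}^2 \;\le\; \sup_{z\ge 0}\frac{2+z^2}{1+z^2}\,\|f_n - v\|_{\cA_2^+}^2 \;\le\; 2\,\|f_n - v\|_{\cA_2^+}^2 \;\xrightarrow[n\to\infty]{}\; 0,
\end{equation*}
which is the claimed convergence.

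No substantive obstacle remains: the contraction of $\Gamma$, the homogeneity of $v$, and the correspondence between $\Gamma$- and $\tilde\Gamma$-iterates on homogeneous functions are all in place or immediate from the discussion preceding the corollary. The only mildly delicate points are the two routine norm comparisons $\|f_0\|_{\cA_2^+} \le \|g_0\|_{\cA_1^+}$ and $\|g_n - u\|_{\cA_1^+} \le \sqrt{2}\,\|f_n - v\|_{\cA_2^+}$, both of which follow from the explicit forms of the $\cA_d^+$ norms and the degree-one homogeneity in $s$.
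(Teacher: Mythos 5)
Your proposal is correct and follows essentially the same route as the paper, which treats the corollary as an immediate consequence of \eqref{eq:un0} (the correspondence $f_n(s,k)=s\,g_n(k/s)$ between $\Gamma$- and $\tilde\Gamma$-iterates of homogeneous functions) together with the contraction property of $\Gamma$ from Theorem \ref{thm:prob1}. You merely make explicit the lifting $f_0(s,k)=s\,g_0(k/s)$ and the two norm comparisons between $\|\cdot\|_{\cA^+_1}$ and $\|\cdot\|_{\cA^+_2}$, which the paper leaves implicit.
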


\begin{remark}\label{rem:properties}{\bf(Properties of $Z$, $u$ and $\Pi u$).}
\begin{itemize}
\item[(a)] (Asymptotic growth). Recalling that $r>\mu_1\vee\mu_2$ and using the explicit form of the solution of \eqref{eq:Z3} we have
\begin{align}\label{eq:limsupZ}
\limsup_{t\to\infty} e^{-(r-\mu_1)t}Z_t=0,\qquad\text{$\P_z$-a.s., for all $z\in\R_+$}.
\end{align}
Then, combining \eqref{eq:limsupZ} with the fact that $u\in\cA^+_1$ (i.e., $u$ has sublinear growth) we get
\begin{align}\label{eq:limsupu}
\limsup_{t\to\infty}e^{-(r-\mu_1)t}u(Z_t)=0,\qquad\text{$\P_z$-a.s., for all $z\in\R_+$.}
\end{align}

\item[(b)] (Supermartingale property). From \eqref{eq:Pi}, using Fubini's theorem and the strong Markov property we have
\begin{align}
\E\left[e^{-(r-\mu_1)\tau}(\Pi u)(Z^z_\tau)\right]\!=\!\int_0^\infty\!\!\E \left[e^{-(r-\mu_1)(\tau+t)}\left(p(1\!+\!Z^z_{\tau+t})\!+\!(1\!-\!p)u(Z^z_{\tau+t})\right)\right] F(\ud t),
\end{align}
for any stopping time $\tau\in\cT$. Now, $t\mapsto e^{-(r-\mu_1)t}(1+Z_t)$ and $t\mapsto e^{-(r-\mu_1)t}u(Z_t)$ are non-negative and uniformly integrable supermartingales by Assumption \ref{ass:sub} and Corollary \ref{cor:OS-u}. Hence, they are supermartingales on $[0,\infty]$. Moreover, for $s\ge t$ we have $\{\tau+t\le s\}\in\cF_{s-t}\subseteq \cF_{s}$, so that $\tau+t$ is a stopping time in $\cT$. Then the optional sampling theorem gives
\begin{align}\label{eq:superm}
&\E\left[e^{-(r-\mu_1)\tau}(\Pi u)(Z^z_\tau)\right]\!\le\!\int_0^\infty\!\!\E\left[e^{-(r-\mu_1)t}\left(p(1\!+\!Z^z_{t})\!+\!(1\!-\!p)u(Z^z_{t})\right)\right] F(\ud t)\! =\! (\Pi u)(z),
\end{align}
for any $\tau\in\cT$.
\end{itemize}
\end{remark}

\subsection{Optimal boundaries and smooth-fit}\label{sec:boundaries}
In this section, we aim to study additional properties of the solution $u$ to the one dimensional problem \eqref{eq:u} that  will enable to characterize the optimal stopping rule (for both the one dimensional and the original two dimensional problem) in terms of two optimal boundaries. Moreover, we will prove that $u\in C^1(\R_+)$, hence $v\in C^1(\R^2_+)$.

The first result shows monotonicity and convexity of $u$.
\begin{proposition}\label{prop:u}
The function $u$ is monotonic non-decreasing and convex.
\end{proposition}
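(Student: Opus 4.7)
My proof proposal is to exploit the fixed-point iteration from Corollary \ref{cor:u}: I will choose a starting function $g_0 \in \cA^+_1$ that is non-decreasing and convex (for instance $g_0 \equiv 0$), show that the operator $\tilde\Gamma$ preserves these two properties, and then pass to the limit using that $\|\cdot\|_{\cA^+_1}$-convergence implies pointwise convergence, and that monotonicity and convexity are preserved under pointwise limits.

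The key structural fact I will use throughout is that $Z$ given by \eqref{eq:Z3} is a geometric Brownian motion, so the flow is multiplicative: $Z^z_t = z\, Z^1_t$, $\P$-a.s. First I would handle monotonicity. Suppose $g \in \cA^+_1$ is non-decreasing. Then for each $t \ge 0$, $z \mapsto p(1+Z^z_t) + (1-p)g(Z^z_t)$ is non-decreasing, so by monotonicity of expectation and Fubini, $\Pi g$ is non-decreasing, and hence so is $\max\{1,\Pi g\}$. Since $z_1 \le z_2$ gives $Z^{z_1}_\tau \le Z^{z_2}_\tau$ pathwise, for every stopping time $\tau$ the integrand inside the $\sup$ in \eqref{eq:ug} is monotone in $z$; taking expectation and then sup yields $\tilde\Gamma g$ non-decreasing.

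Next, for convexity, suppose $g$ is convex. For fixed $t$ and fixed path of $Z^1$, the map $z \mapsto g(z Z^1_t) = g(Z^z_t)$ is convex (as composition of a convex function with a linear map), and $z \mapsto p(1 + z Z^1_t)$ is affine. Hence the integrand defining $\Pi g$ is convex in $z$ pathwise; taking $\E$ and integrating against $F(\ud t)$ preserves convexity, so $\Pi g$ is convex. Since $1$ and $\Pi g$ are convex, $\max\{1, \Pi g\}$ is convex. For each fixed $\tau$, the map
\[
z \mapsto \E\!\left[e^{-(r-\mu_1)\tau}\max\{1,(\Pi g)(z Z^1_\tau)\}\right]
\]
is convex in $z$ (convexity pathwise plus linearity of $\E$), and the supremum of a family of convex functions is convex, so $\tilde\Gamma g$ is convex.

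Finally, take $g_0 \equiv 0$ (trivially in $\cA^+_1$, non-decreasing, convex) and set $g_{n+1} := \tilde\Gamma g_n$; by induction each $g_n$ is non-decreasing and convex. Corollary \ref{cor:u} gives $g_n \to u$ in $\cA^+_1$, and because $\|f\|^2_{\cA^+_1} = \sup_{z \ge 0} |f(z)|^2/(1+z^2)$, this convergence is in particular pointwise on $\R_+$. Pointwise limits of non-decreasing functions are non-decreasing, and pointwise limits of convex functions are convex, so $u$ inherits both properties, completing the proof. The only mildly delicate step is the convexity preservation, which crucially relies on the multiplicative (GBM) structure of $Z$; without it, $z \mapsto g(Z^z_t)$ need not be convex even for convex $g$.
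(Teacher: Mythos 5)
Your proposal is correct and follows essentially the same route as the paper: both use the iteration $g_{n+1}=\tilde\Gamma g_n$ from Corollary \ref{cor:u}, show that $\tilde\Gamma$ (via $\Pi$) preserves monotonicity and convexity by exploiting the linearity of the flow $z\mapsto Z^z_\tau=zZ^1_\tau$, and pass these properties to the limit $u$. Your phrasing of the convexity step (pathwise convexity plus linearity of expectation, then supremum over $\tau$ of convex functions) is just a repackaging of the paper's use of convexity of $\max\{1,\Pi g_n\}$ together with $\sup(a+b)\le\sup(a)+\sup(b)$, and your explicit choice of a convex, non-decreasing starting point $g_0$ is a welcome clarification rather than a deviation.
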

\begin{proof}
It follows from Corollary \ref{cor:u} that $u=\lim_{n\to\infty}\tilde\Gamma g_n$ in $\cA^+_1$. Thus, it is sufficient to show that if $g_n$ is non-decreasing and convex then $\tilde\Gamma g_n$ inherits such properties.

{\bf Step 1}. ({\em Monotonicity.}) Assume that $g_n$ is non-decreasing. Then by \eqref{eq:Z3} and \eqref{eq:Pi} we get that $\Pi g_n$ is also non-decreasing. This implies that $z\mapsto\max\{1,(\Pi g_n)(Z^z_\tau)\}$ is non-decreasing for any $\tau\in\cT$ given and fixed, and hence, by comparison arguments we have that $z\mapsto g_{n+1}(z)=(\tilde\Gamma g_{n})(z)$ is non-decreasing as well.

{\bf Step 2}. ({\em Convexity.}) Assume that $g_n$ is non-decreasing and convex. From \eqref{eq:Z3} and \eqref{eq:Pi} we immediately see that $\Pi g_n$ is convex too. Then, $z\mapsto\max\{1,(\Pi g_n)(z)\}$ is convex and non-decreasing. Let us now consider $z_1<z_2$ and $\lambda\in(0,1)$, and set $z_\lambda=\lambda z_1+(1-\lambda)z_2$. Using convexity of $\max\{1,(\Pi g_n)\}$, linearity of $z\mapsto Z^z_\tau$ (for $\tau\in\cT$ given and fixed) and the inequality $\sup(a+b)\le \sup(a)+\sup(b)$ we derive
\begin{align}
g_{n+1}(\lambda z_1+(1-\lambda)z_2)=&\sup_{\tau\in \cT}\E\left[e^{-(r-\mu_1)\tau}\max\left\{1,(\Pi g_n)(Z^{z_\lambda}_\tau)\right\}\right]\\
\le &\sup_{\tau\in \cT}\E\left[e^{-(r-\mu_1)\tau}\max\left\{1,\lambda(\Pi g_n)(Z^{z_1}_\tau)+(1-\lambda)(\Pi g_n)(Z^{z_2}_\tau)\right\}\right]\\
\le &\lambda\sup_{\tau\in \cT}\E\left[e^{-(r-\mu_1)\tau}\max\left\{1,(\Pi g_n)(Z^{z_1}_\tau)\right\}\right]\\
&+(1-\lambda)\sup_{\tau\in \cT}\E\left[e^{-(r-\mu_1)\tau}\max\left\{1,(\Pi g_n)(Z^{z_2}_\tau)\right\}\right]\\
=&\lambda g_{n+1}(z_1)+(1-\lambda)g_{n+1}(z_2).
\end{align}

Monotonicity and convexity of $u$ follow from the two steps above.
\end{proof}

The advantage of dealing with a convex function (of one variable) is that its first derivative has at most countably many points of discontinuity. We will now show that higher regularity holds for our value function.
\begin{proposition}\label{prop:C1}
We have that $u\in C^1(\R_+)$.
\end{proposition}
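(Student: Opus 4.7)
The plan combines convexity of $u$ (Proposition~\ref{prop:u}) with three ingredients: smoothness of the obstacle $G := \max\{1,\Pi u\}$, ODE regularity inside the continuation region, and smooth-fit at the free boundary. First, I would establish that $\Pi u \in C^1((0,\infty))$ by exploiting the multiplicative structure $Z_t^z = z Y_t$, with $Y$ a GBM starting at $1$ and independent of $z$, so that
\begin{align*}
(\Pi u)(z) = \int_0^\infty e^{-(r-\mu_1)t}\bigl(p + pz\,\E[Y_t] + (1-p)\,\E[u(zY_t)]\bigr) F(\ud t).
\end{align*}
The log-normal density of $Y_t$ is $C^\infty$ in the argument $zy$, and $u\in\cA^+_1$ has sublinear growth, so a change of variables $w = zy$ combined with dominated convergence justifies differentiating under the integral. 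Consequently $G$ is $C^1$ on $(0,\infty)$ except possibly at a single point $z^\star$ where $(\Pi u)(z^\star) = 1$, at which $G$ may exhibit a convex corner.

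Inside the open continuation region $\cC:=\{u>G\}$, the martingale property from Corollary~\ref{cor:OS-u} together with the infinitesimal characterization for GBM shows that $u$ is a classical solution of the uniformly elliptic ODE
\begin{align*}
\tfrac{1}{2}(\beta_1^2+\beta_2^2)\, z^2 u''(z) + (\mu_2-\mu_1)\, z u'(z) - (r-\mu_1)\, u(z) = 0,
\end{align*}
so $u \in C^\infty(\cC)$ by standard elliptic regularity. On the interior of the stopping set $\cS := \{u=G\}$, $u$ coincides with $G$ and thus inherits its $C^1$ smoothness, with the possible exception of $z^\star$.

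The main obstacle, and the final step, is to establish smooth-fit at every boundary point $b\in\partial\cS$ and to exclude $z^\star$ from the interior of $\cS$. Convexity yields $u'_-(b)\le u'_+(b)$ for free; strict inequality must be ruled out. To this end I would apply It\^o--Tanaka to the convex function $u$ in the supermartingale $e^{-(r-\mu_1)t}u(Z_t)$ from Corollary~\ref{cor:OS-u}: the formula produces a local time contribution $\tfrac{1}{2}(u'_+(b)-u'_-(b))\,\ud L^b_t$ at any such kink, while the ODE drift vanishes in $\cC$ and is continuous through $b$ thanks to $G\in C^1$ near $b$ (Step~1); the supermartingale property then forces the local time coefficient to vanish, so $u\in C^1(\R_+)$. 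The same mechanism excludes $z^\star\in\mathrm{int}(\cS)$: a convex corner of $u=G$ at such a point would generate a strictly positive local time drift incompatible with supermartingality, contradicting optimality of stopping there. Since the obstacle is itself determined by the unknown $u$ through the non-local operator $\Pi$, the classical smooth-fit machinery becomes accessible only through the smoothing of the GBM convolution in Step~1.
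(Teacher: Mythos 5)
Your final step is exactly the paper's mechanism (It\^o--Tanaka on the convex $u$ plus the supermartingale property of $e^{-(r-\mu_1)t}u(Z_t)$ from Corollary \ref{cor:OS-u}), but as structured the proposal has two concrete problems. First, Step~1 is circular in a case the statement must cover: the setting places no restriction on $F$, and $F$ may have an atom at $t=0$ (cf.\ Remark \ref{rem:theta}, e.g.\ $\vartheta=t_0\wedge\gamma$ with $\P(\gamma=0)>0$). If $F(\{0\})=q\in(0,1)$, then $(\Pi u)(z)$ contains the un-smoothed term $q\,[p(1+z)+(1-p)u(z)]$, so the GBM convolution does not regularise it and ``$\Pi u\in C^1$'' presupposes $u\in C^1$. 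Your inheritance of $C^1$ on the interior of $\cS$ from $G$, and the continuity of the drift ``through $b$'', both rest on this step. Second, the decisive point is missing: asserting that a positive local-time term is ``incompatible with supermartingality'' is not enough, because on the stopping side the finite-variation drift can be negative, and one must show it cannot offset the local time. What closes the argument is the scaling comparison: with $\zeta_\eps$ the exit time from a small interval around the kink, $\E[L^{\bar z}_{t\wedge\zeta_\eps}]\sim t^{1/2+\delta}$ while the $\ud s$-drift contributes only $O(\E[t\wedge\zeta_\eps])=O(t)$, so for small $t$ the local time dominates and $\E[e^{-(r-\mu_1)(t\wedge\zeta_\eps)}u(Z_{t\wedge\zeta_\eps})]>u(\bar z)$, contradicting the supermartingale property. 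Without this estimate your smooth-fit step is not justified.

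Once you have that estimate, Steps~1 and~2 become superfluous, and this is how the paper proceeds: it applies It\^o--Tanaka--Meyer directly to $u$ at an \emph{arbitrary} candidate kink $\bar z$, using only that $u$ is convex (Proposition \ref{prop:u}), hence $u'$ is locally bounded and $u''$ is a non-negative measure charging $\{\bar z\}$ with mass $\bar c>0$; the drift is then bounded near $\bar z$ with no reference to the obstacle, to the decomposition $\cC$ versus $\cS$, or to the free-boundary geometry (which, note, is only established later in Theorem \ref{thm:final}). Contrary to your closing remark, the smoothing of the GBM convolution is not what makes the argument accessible; convexity plus the local-time scaling does all the work, and it does so uniformly in $F$, including the atom-at-zero case where your Step~1 breaks down.
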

\begin{proof}
Assume, by contradiction, that there exists $\bar z\in\R_+$ such that
\begin{align}\label{eq:C1-1}
\bar c:=u'(\bar z+)-u'(\bar z-)>0,
\end{align}
where $u'(z\pm)$ are the right/left-derivatives of $u$ at a point $z$.
Denote $\zeta_\eps:=\inf\{t\ge0:Z^{\bar z}_t\notin(\bar z\!-\!\eps,\bar z\!+\!\eps)\}$, for $\eps>0$ given and fixed.
Then for any $t\in(0,1)$ we have
\begin{align}\label{eq:C1-0}
&\E\left[e^{-(r-\mu_1)(t\wedge\zeta_\eps)}u(Z^{\bar z}_{t\wedge\zeta_\eps})\right]\\
&\quad=u(\bar z)+\E\bigg[\int_0^{t\wedge\zeta_\eps} \!\!e^{-(r-\mu_1)s}\left(u'(Z^{\bar z}_s-)Z^{\bar z}_s(\mu_2\!-\!\mu_1)\!-\!(r\!-\!\mu_1)u(Z^{\bar z}_s)\right)\ud s\\
&\qquad\qquad\qquad+\!\tfrac{1}{2}\int_0^\infty\!\! L^a_{t\wedge\zeta_\eps}(Z^{\bar z}) u''(\ud a)\bigg]
\end{align}
thanks to It\^o-Tanaka-Meyer formula (see \citet[Thm.~70, Ch.~IV]{protter2005stochastic}), where $(L^a_t)_{t\ge 0}$ is the local time of the process $Z$ at a point $a\in\R_+$, $u''$ is understood as a non-negative measure, the left-derivative $u'(z-)$ is well defined by convexity and the martingale term has been removed.

We now notice that $u'$ is locally bounded since it is of bounded variation on $\R_+$ (Proposition \ref{prop:u}) and $u$ is bounded on $[\bar z-\eps,\bar z+\eps]$ by continuity. Then, using \eqref{eq:C1-1} and \eqref{eq:C1-0} we get
\begin{align}\label{eq:Lt}
&\E\left[e^{-(r-\mu_1)(t\wedge\zeta_\eps)}u(Z^{\bar z}_{t\wedge\zeta_\eps})\right]\ge u(\bar z)-c_{\eps}\E\left[t\wedge\zeta_\eps\right]+\!\tfrac{1}{2}\,\bar c\, \E\left[ L^{\bar z}_{t\wedge\zeta_\eps}(Z^{\bar z})\right],
\end{align}
where $c_\eps>0$ is a suitable constant independent of $t$. In the limit as $t\to0$ one has $\E[L^{\bar z}_{t\wedge\zeta_\eps}(Z^{\bar z})]\sim t^{1/2+\delta}$, for arbitrarily small $\delta>0$, and $\E\left[t\wedge\zeta_\eps\right]\sim t$ (see, e.g., eqs.~(34) and (35) in \cite{de2017optimal}). Hence the positive term in \eqref{eq:Lt} dominates and
\[
\E\left[e^{-(r-\mu_1)(t\wedge\zeta_\eps)}u(Z^{\bar z}_{t\wedge\zeta_\eps})\right]> u(\bar z)
\]
for sufficiently small $t\in(0,1)$. This inequality violates the supermartingale property of $u$ (see Corollary \ref{cor:OS-u}), thus implying that $\bar c=0$.
\end{proof}

Next, we will use properties of the value function $u$ to describe the geometry of the continuation and stopping region for the one dimensional problem \eqref{eq:u}. By monotonicity of $u$ (and of $\Pi u$) and noticing that $(\Pi u)(z)\uparrow \infty$ as $z\to\infty$, it is clear that there exists at most a unique point $z_0<\infty$ such that $(\Pi u)(z_0)=1$. To be more precise we set
\begin{align}\label{eq:z_0}
z_0:=\inf\{z\in\R_+\,:\,(\Pi u)(z)>1\}.
\end{align}
We want to show that $z_0>0$.

\begin{lemma}\label{lem:z_0}
We have $z_0>0$ if and only if $F(0)<1$.
\end{lemma}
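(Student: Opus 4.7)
The plan is to reduce the lemma to computing the one-sided limit $(\Pi u)(0+):=\lim_{z\downarrow 0}(\Pi u)(z)$. Since $\Pi u$ inherits monotonicity from $u$ (Proposition \ref{prop:u}) and, by dominated convergence based on Assumption \ref{ass:sub}, is continuous on $\R_+$ with a right-limit at $0$, the condition $z_0>0$ is equivalent to $(\Pi u)(0+)<1$.

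For the easy direction $F(0)=1\Rightarrow z_0=0$, I would use that $F=\delta_0$ gives the explicit identity $(\Pi u)(z)=p(1+z)+(1-p)u(z)$. Taking $\tau=0$ in \eqref{eq:u} yields $u\ge 1$, and hence $(\Pi u)(z)\ge p(1+z)+(1-p)=1+pz>1$ for every $z>0$. Thus the set in \eqref{eq:z_0} is all of $\R_+$ and $z_0=0$.

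For the harder direction $F(0)<1\Rightarrow z_0>0$, I would exploit the GBM scaling $Z^z_t=zM_t$, where $M$ is the exponential martingale-type factor of \eqref{eq:Z3} with $M_0=1$, to send $z\downarrow 0$ inside \eqref{eq:Pi}. Monotonicity guarantees that $u(0+)\in[1,\infty)$ exists, and dominated convergence (justified by $u\in\cA_1^+$ together with the supermartingale bound $\E[e^{-(r-\mu_1)t}(1+Z_t)]\le 1+z$ from Remark \ref{rem:properties}(b)) delivers
\[
(\Pi u)(0+)=c_F\bigl[p+(1-p)u(0+)\bigr],\qquad c_F:=\int_0^\infty e^{-(r-\mu_1)t}F(\ud t).
\]
The analogous limit in \eqref{eq:u} furnishes the Bellman identity at the absorbed state: $u(0+)=\max\{1,(\Pi u)(0+)\}$. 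Since $r>\mu_1$ and $F((0,\infty))>0$, one has $c_F<1$. The alternative $(\Pi u)(0+)>1$ would force $u(0+)=(\Pi u)(0+)$ and thus $u(0+)=c_F p/(1-c_F(1-p))$, which is $\le 1$ when $c_F\le 1$, yielding a contradiction. Hence $u(0+)=1$ and $(\Pi u)(0+)=c_F<1$. By continuity of $\Pi u$, the set $\{(\Pi u)<1\}$ contains a right-neighborhood $(0,\delta)$ of the origin, so $z_0\ge\delta>0$.

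The main obstacle is rigorously justifying the Bellman limit $u(0+)=\max\{1,(\Pi u)(0+)\}$ in \eqref{eq:u}, because the supremum there is over stopping times that a priori depend on $z$. The lower bound $u(0+)\ge\max\{1,(\Pi u)(0+)\}$ is immediate from $\tau=0$ and continuity of $\Pi u$. For the reverse, one picks $\varepsilon$-optimal stopping times $\tau^z_\varepsilon$ and uses the uniform estimate $\max\{1,(\Pi u)(y)\}\le C(1+y)$ together with the supermartingale control $\E[e^{-(r-\mu_1)\tau^z_\varepsilon}\, zM_{\tau^z_\varepsilon}]\le z\to 0$, so that only the deterministic contribution $\max\{1,(\Pi u)(0+)\}\cdot\E[e^{-(r-\mu_1)\tau^z_\varepsilon}]\le \max\{1,(\Pi u)(0+)\}$ survives in the limit. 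A cleaner route is to reinterpret $u(0+)$ as the value of the stopping problem with absorbing initial state $Z\equiv 0$, where the Bellman equation at $z=0$ is trivial, and to invoke stability of the value function under the continuous convergence of flows $Z^{z_n}\Rightarrow Z^0$.
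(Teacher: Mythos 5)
Your argument is correct in substance but takes a genuinely different route from the paper's in the harder direction ($F(0)<1\Rightarrow z_0>0$). The paper never works with $u$ at the boundary point directly: it exploits the fixed-point iteration $g_{n+1}=\tilde\Gamma g_n$ of Corollary \ref{cor:u}, shows by induction that $g_n(0+)=1$ and $(\Pi g_n)(0+)=\int_0^\infty e^{-(r-\mu_1)t}F(\ud t)<1$ (this is \eqref{eq:Pi0}), and then passes to the limit in $\cA^+_1$ to get $u(0+)=1$, after which $(\Pi u)(0+)<1$ follows from the same computation. You instead work directly with $u$, derive the two relations $(\Pi u)(0+)=c_F\bigl[p+(1-p)u(0+)\bigr]$ and $u(0+)=\max\{1,(\Pi u)(0+)\}$, and solve the resulting scalar system; the case analysis, the strict bound $c_F<1$ (from $r>\mu_1$ and $F((0,\infty))>0$), and the easy direction (which coincides with the paper's Step 1, via $u\ge1$ and the explicit form of $\Pi u$ when $F=\delta_0$) are all fine. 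What the paper's route buys is a trivial interchange at the origin: for each fixed $g_n$ the quantity $\max\{1,(\Pi g_n)(0+)\}$ is the constant $1$ by the induction hypothesis, so the supremum over $\tau$ collapses immediately. What your route buys is a self-contained argument at the level of $u$, at the price of the Bellman identity at $0+$, which you rightly single out as the main obstacle.

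That step is where your sketch needs repair. The estimate you invoke, $\max\{1,(\Pi u)(y)\}\le C(1+y)$, is not by itself enough: inserted into the $\eps$-optimal stopping time bound it only yields $u(0+)\le C$, and does not isolate the ``deterministic contribution'' $\max\{1,(\Pi u)(0+)\}$. What is needed is the sharper estimate $\max\{1,(\Pi u)(y)\}\le \max\{1,(\Pi u)(0+)\}+C'y$, and this is available from material already in the paper: $\Pi u$ is nondecreasing and convex (proof of Proposition \ref{prop:u}) with linear growth, hence its slopes are bounded by the growth constant and it is globally Lipschitz on $\R_+$; alternatively, use the scaling $Z^y_t=yZ^1_t$ in \eqref{eq:Pi} together with the Lipschitz property of the convex function $u\in\cA^+_1$. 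With that estimate and the fact that $t\mapsto e^{-(r-\mu_1)t}Z_t$ is a non-negative supermartingale (its discounted drift is $(\mu_2-r)Z_t<0$; cf.~Remark \ref{rem:properties}), optional sampling gives $\E\bigl[e^{-(r-\mu_1)\tau^z_\eps}Z^z_{\tau^z_\eps}\bigr]\le z$, hence $u(z)\le \eps+\max\{1,(\Pi u)(0+)\}+C'z$, and letting $z\downarrow0$, $\eps\downarrow0$ closes your argument. Your alternative ``cleaner route'' via stability of the value under convergence of flows is not really cleaner here: it amounts to re-proving Lemma \ref{lem:usc-G} at the absorbing state $z=0$, which is no less work than the direct estimate above.
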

\begin{proof}

We observe that since $(\Pi u)(z)$ is increasing and continuous, then $(\Pi u)(0)<1$ if an only if $z_0>0$.

{\bf Step 1}. ({\em$z_0>0\Rightarrow F(0)<1$}.) Assume $z_0>0$. Then $(\Pi u)(z)<1$ for $z\in[0,z_0)$. However, from Corollary \ref{cor:F} we know that if $F(0)=1$ it must be $(\Pi u)(z)=p(1+z)+(1-p)u(z)\ge 1$ for all $z\ge 0$ (see \eqref{eq:psi}), where the final inequality uses $u\ge 1$, by equation \eqref{eq:u}. Hence we reach a contradiction and $F(0)<1$.

{\bf Step 2}. ({\em$z_0>0\Leftarrow F(0)<1$}.) Let us assume $F(0)<1$ and let us prove $(\Pi u)(0)<1$.
Recall that $u=\lim_{n\to\infty}\tilde\Gamma g_n$ (see Corollary \ref{cor:u}). First, we show that if $g_n(0)=1$ then $z^n_0>0$, where
\[
z_0^n:=\inf\{z\in\R_+\,:\,(\Pi g_n)(z)>1\}.
\]
By dominated convergence, letting $z\downarrow 0$ and using that $Z^z_t\downarrow 0$, $\P$-a.s., for all $t\ge 0$ we obtain
\begin{align}\label{eq:Pi0}
(\Pi g_n)(0):=&\lim_{z\to 0}(\Pi g_n)(z)\\
=&[p\!+\!(1\!-\!p)g_n(0)]\!\int_0^\infty \!\!e^{-(r-\mu_1)t}F(\ud t)=\int_0^\infty \!\!e^{-(r-\mu_1)t}F(\ud t)<1,
\end{align}
where the final inequality uses $r>\mu_1$, $F(0)<1$ and $g_n(0)=1$. This establishes $z^n_0>0$.

Second, we show that  $g_{n+1}(0)=(\tilde\Gamma g_n)(0)=1$. Using again dominated convergence and that $Z^z_\tau\downarrow 0$ as $z\to 0$, $\P$-a.s., for any $\tau\in\cT$ given and fixed, we find
\begin{align}
(\tilde\Gamma g_n)(0):=&\lim_{z\to 0}(\tilde\Gamma g_n)(z)\\
=&\sup_{\tau\in \cT}\E\left[e^{-(r-\mu_1)\tau}\max\{1,(\Pi g_n)(0)\}\right]=\sup_{\tau\in \cT}\E\left[e^{-(r-\mu_1)\tau}\right]=1.
\end{align}
Finally, letting $n\to\infty$ in the last equation we also deduce $u(0)=1$. Then, by the same argument as in \eqref{eq:Pi0} we get that
\[
(\Pi u)(0)=\lim_{z\to 0}(\Pi u)(z)<1,
\]
which concludes the proof.
\end{proof}

Next, we will characterize the geometry of the stopping set. We denote by
\begin{align}\label{eq:Cu}
\cC:=\{z\in\R_+\,:\, u(z)>\max[1,(\Pi u)(z)]\}
\end{align}
the continuation set of problem \eqref{eq:u} and by
\begin{align}\label{eq:Su}
\cS:=\{z\in\R_+\,:\, u(z)=\max[1,(\Pi u)(z)]\}
\end{align}
its stopping set.

\begin{theorem}\label{thm:final}
If $F(0)=1$ we have $\cC=\varnothing$ and $u(z)=1+z$. If instead $F(0)<1$, then there exist two points $0<a_*<z_0<b_*<\infty$ such that $\cC=(a_*,b_*)$ with $z_0>0$ as in \eqref{eq:z_0}.
\end{theorem}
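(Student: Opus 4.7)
The two cases require rather different arguments. If $F(0)=1$, I would apply Corollary \ref{cor:F} to the two-dimensional problem \eqref{eq:value2} to obtain $v(s,k)=\sup_\tau \E_{s,k}[e^{-r\tau}(S_\tau+K_\tau)]$. Since $r>\mu_1\vee\mu_2$, the process $e^{-rt}(S_t+K_t)$ is a supermartingale, so $v(s,k)\le s+k$; taking $\tau=0$ reverses the inequality, hence $v(s,k)=s+k$ and $u(z)=v(1,z)=1+z$. With $F(0)=1$, the integral in \eqref{eq:Pi} collapses at $t=0$, giving $(\Pi u)(z)=p(1+z)+(1-p)(1+z)=1+z=u(z)$, so $u=\max\{1,\Pi u\}$ on $\R_+$ and $\cC=\varnothing$.

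For $F(0)<1$, the first step is to show $z_0\in\cC$. The obstacle $h:=\max\{1,\Pi u\}$ is identically $1$ on $[0,z_0]$ and equals the strictly increasing function $\Pi u$ on $[z_0,\infty)$ (the strict monotonicity follows from the positive linear coefficient $p\int_0^\infty e^{-(r-\mu_2)t}F(\ud t)$ in $\Pi u$), and thus $h$ has a convex kink at $z_0$ with $h'(z_0-)=0<h'(z_0+)$. If $u(z_0)=h(z_0)=1$, the non-negative $C^1$ function $u-h$ (using Proposition \ref{prop:C1}) would attain $0$ at $z_0$ and one would simultaneously need $u'(z_0)\le 0$ and $u'(z_0)\ge h'(z_0+)>0$, a contradiction. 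Hence $u(z_0)>1$, and by continuity a neighbourhood of $z_0$ lies in $\cC$. Next one shows $0<a_*<z_0<b_*<\infty$. On the open set $\cC$, Corollary \ref{cor:OS-u} together with standard regularity gives that $u$ solves classically the linear ODE $\cL u=(r-\mu_1)u$, where $\cL=\tfrac12(\beta_1^2+\beta_2^2)z^2\partial_{zz}+(\mu_2-\mu_1)z\partial_z$, with fundamental solutions $z^{\gamma_1},z^{\gamma_2}$ satisfying $\gamma_1<0<1<\gamma_2$ (the characteristic polynomial is negative at $0$ and at $1$, thanks to $r>\mu_1\vee\mu_2$). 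An unbounded component of $\cC$ would force the $z^{\gamma_2}$-coefficient to vanish, by sublinear growth $u\in\cA_1^+$ and $\gamma_2>1$; then $u\to 0$ at infinity, contradicting $u\ge 1$. Thus $b_*:=\sup\cC<\infty$. A component touching $0$ is likewise impossible: continuity of $u$ at $0$ with $u(0)=1$ would force the $z^{\gamma_1}$-coefficient to vanish, then $u\to 0$ as $z\to 0$, contradiction. Since $u$ is convex non-decreasing with $u(0)=1$, the set $\{u=1\}$ is an interval $[0,a_*]$ with $a_*>0$, and $a_*<z_0$ because $u(z_0)>1$; hence $\cC\cap[0,z_0]=(a_*,z_0]$.

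It remains to establish the connectedness of $\cC\cap[z_0,\infty)$, i.e., that $\cC=(a_*,b_*)$. Here I would appeal to the classical concave-majorant representation of 1D GBM stopping problems: in the transformed coordinate $F(z)=z^{\gamma_2-\gamma_1}$, the function $u/z^{\gamma_1}$ coincides with the smallest concave majorant of the transformed obstacle $H:=h/z^{\gamma_1}$. For $z\le z_0$ this $H$ is a concave power ($F^\alpha$ with $\alpha=|\gamma_1|/(\gamma_2+|\gamma_1|)\in(0,1)$), and the asymptotics $\Pi u(z)\sim K(p+(1-p)c_\infty)\,z$ as $z\to\infty$ (with $c_\infty:=\lim_{z\to\infty}u(z)/z<\infty$ and $K:=\int_0^\infty e^{-(r-\mu_2)t}F(\ud t)<1$, since $F(0)<1$) imply that $H$ is again concave at infinity. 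Therefore $H$ fails to be concave only on a single middle region, whence its smallest concave majorant coincides with $H$ outside a single open interval and is affine on that interval; translating back yields $\cC=(a_*,b_*)$.

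\emph{Main obstacle.} The connectedness step is the most delicate: because both $u$ and $\Pi u$ are convex, it cannot be read off from convexity and monotonicity of $u$ alone, and the required rigidity comes from the transformed concave-majorant representation together with a careful asymptotic analysis of $\Pi u$. An alternative route would be a direct contradiction argument combining the differential inequality $\cL\Pi u-(r-\mu_1)\Pi u\le 0$ with the $C^1$ smooth fit at every hypothetical additional boundary point.
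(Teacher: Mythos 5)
Most of your outline is sound and close in spirit to the paper's proof: the $F(0)=1$ case via the supermartingale property of $e^{-rt}(S_t+K_t)$, the smooth-fit contradiction at $z_0$ (the paper instead uses an It\^o--Tanaka--Meyer/local-time argument, but your first-order argument works since $u\in C^1$ and $(\Pi u)'(z_0+)>0$), and the exclusion of components of $\cC$ touching $0$ or being unbounded via the fundamental solutions $z^{\gamma_1},z^{\gamma_2}$ (the paper does the same thing probabilistically through Laplace transforms of hitting times, using the martingale property of $e^{-(r-\mu_1)(t\wedge\hat\tau)}u(Z_{t\wedge\hat\tau})$ and \eqref{eq:limsupu}).

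The genuine gap is exactly the step you flag as delicate: connectedness of $\cC$ above $z_0$. Your argument is that $H$ (the transformed obstacle) is concave on the image of $[0,z_0]$ and, because $\Pi u(z)\sim K(p+(1-p)c_\infty)z$, is ``again concave at infinity'', so it fails concavity only on a single middle region. This does not follow: asymptotic equivalence of $\Pi u$ to a linear function is a first-order statement and gives no control on second-order behaviour, so it does not make $H$ concave on any neighbourhood of infinity; and even if $H$ were concave near $0$ and near $\infty$, it could fail concavity on several disjoint middle intervals, in which case the smallest concave majorant may touch $H$ between them and the contact (stopping) set need not be the complement of a single interval. The rigidity you need is global, not asymptotic: it is the excessivity of $\Pi u$, i.e.\ that $t\mapsto e^{-(r-\mu_1)t}(\Pi u)(Z_t)$ is a supermartingale (equation \eqref{eq:superm}, Remark \ref{rem:properties}(b)) — equivalently, in the transformed picture, that the transform of $\Pi u$ is concave on the whole half-line, so that $H$ is the maximum of two concave pieces glued at $y_0$ and the majorant bridges a single interval. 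This is precisely how the paper argues (in probabilistic form): if $z_2\in\cS\cap(z_0,\infty)$ and some $z_3>z_2$ were in $\cC$, then starting from $z_3$ the optimal rule stops at a level $\ge z_2>z_0$ where the obstacle equals $\Pi u$, and optional sampling with \eqref{eq:superm} gives $u(z_3)\le(\Pi u)(z_3)$, contradicting $z_3\in\cC$; hence $\cS\cap(z_0,\infty)$ is an upper half-line, which together with your finiteness argument yields $\cC=(a_*,b_*)$. Your one-line ``alternative route'' via $\cL\,\Pi u-(r-\mu_1)\Pi u\le 0$ is indeed this ingredient, but it is only mentioned, not executed, and as written your main route does not close the argument. (A minor related point: defining $b_*:=\sup\cC$ and declaring it finite only from the absence of an unbounded component is premature before connectedness is known; the propagation argument above fixes this as well.)
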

\begin{proof}
Let us first consider $F(0)=1$. From Lemma \ref{lem:z_0} we know that $z_0=0$. Therefore $\max\{1,(\Pi u)(z)\}=(\Pi u)(z)$ and the problem reduces to
\begin{align}
u(z)=\sup_{\tau\in\cT}\E_z\left[e^{-(r-\mu_1)\tau}(1+Z_\tau)\right]
\end{align}
by the same arguments as in Corollary \ref{cor:u}. Since the process $t\mapsto e^{-(r-\mu_1) t}(1+Z_t)$ is a supermartingale (Remark \ref{rem:properties}) then immediate stopping is optimal. That is, $\P_z(\hat \tau =0)=1$~(with $\hat \tau$ as in Corollary \ref{cor:OS-u}) and $u(z)=1+z$.

We now consider the case $F(0)<1$. This part of the proof is divided into three steps.

{\bf Step 1}. ({\em $z_0\in \cC$}.) Here we show that $z_0$ as in \eqref{eq:z_0} is always contained in the continuation region.  To simplify the notation we set $G(z):=\max\{1,(\Pi u)(z)\}$. Since the mapping $z\mapsto G(z)$ is convex (Proposition \ref{prop:u}), we can apply It\^o-Tanaka-Meyer formula (see \citet[Thm.~70, Ch.~IV]{protter2005stochastic}) to rewrite the stopping problem in the form
\begin{align}\label{eq:meyer}
u(z)=G(z)\!+\!\sup_{\tau\in \cT}\E\bigg[&\int_0^\tau \!\!e^{-(r-\mu_1)t}\left(G'(Z^z_s-)Z^z_s(\mu_2\!-\!\mu_1)\!-\!(r\!-\!\mu_1)G(Z^z_s)\right)\ud s\\
&+\!\tfrac{1}{2}\int_0^\infty\!\! L^a_\tau(Z^z) G''(\ud a)\bigg]
\end{align}
where $(L^a_t)_{t\ge 0}$ is the local time of the process $Z$ at a point $a\in\R_+$, $G''$ is understood as a non-negative measure, the left-derivative $G'(z-)$ is well defined by convexity and the martingale term can be removed thanks to standard localisation arguments.

Since $(\Pi u)$ is non-decreasing and convex with $(\Pi u)(0)<1$ (see proof of Lemma \ref{lem:z_0}), it is clear that $G''(\{z_0\})=(\Pi u)'(z_0+)>0$, where the existence of the right limit of the derivative of $(\Pi u)$ is guaranteed by convexity. Then, using \eqref{eq:meyer} with $z=z_0$ and $\tau=t\wedge\rho_\eps$ gives
\begin{align}
&u(z_0)-G(z_0)\\
&\ge \E\left[\int_0^{t\wedge\rho_\eps} \!\!e^{-(r-\mu_1)t}\left(G'(Z^{z_0}_s)Z^{z_0}_s(\mu_2\!-\!\mu_1)\!-\!(r\!-\!\mu_1)G(Z^{z_0}_s)\right)\ud s\!+\!\tfrac{1}{2}(\Pi u)'(z_0+)L^{z_0}_{t\wedge\rho_\eps}(Z^{z_0}) \right]
\end{align}
with $\rho_\eps:=\inf\{t\ge0:Z^{z_0}_t\notin(z_0\!-\!\eps,z_0\!+\!\eps)\}$, for $\eps>0$ given and fixed. Since $G$ and $G'$ are bounded on $[z_0\!-\!\eps,z_0\!+\!\eps]$ (recall that $G'$ is of bounded variation), we can find a constant $c_\eps>0$, independent of $t>0$, such that
\begin{align}
&u(z_0)-G(z_0)\ge -c_\eps\E\left[t\wedge\rho_\eps\right]\!+\!\tfrac{1}{2}(\Pi u)'(z_0+)\E\left[L^{z_0}_{t\wedge\rho_\eps}(Z^{z_0}) \right].
\end{align}
For small $t$ one has $\E\left[L^{z_0}_{t\wedge\rho_\eps}(Z^{z_0}) \right]\sim t^{1/2+\delta}$, for arbitrarily small $\delta>0$, and $\E\left[t\wedge\rho_\eps\right]\sim t$ (see, e.g., eqs.~(34) and (35) in \cite{de2017optimal}), hence the positive term dominates and $u(z_0)>G(z_0)$ as claimed.
\vspace{+5pt}

{\bf Step 2.} ({\em existence of $a_*$}.) Since $z\mapsto u(z)-1$ is non-decreasing and $u(z)\ge 1$, it is clear that if $z_1\in(0,z_0)$ belongs to $\cS$ then $[0,z_1]\subseteq \cS$. It remains to prove that it is possible to find one such $z_1$ strictly above zero, that is $a_*:=\sup\{z\in(0,z_0):z\in \cS\}>0$. We proceed by contradiction. Assume that $a_*=0$; then by the martingale property of $u$ (Corollary \ref{cor:OS-u}) we have that for any $z\in(0,z_0)$, setting $\rho_0=\rho_0(z)=\inf\{t\ge0:Z^z_t=z_0\}$, it holds
\begin{align}\label{eq:step2}
u(z)=&\E\left[e^{-(r-\mu_1)\rho_0}u(Z^z_{\rho_0})\right]=\E\left[e^{-(r-\mu_1)\rho_0}u(Z^z_{\rho_0})\I_{\{\rho_0<\infty\}}\right]\\
=&u(z_0)\E\left[e^{-(r-\mu_1)\rho_0(z)}\I_{\{\rho_0(z)<\infty\}}\right]=u(z_0)\left(\frac{z}{z_0}\right)^{q_1},
\end{align}
where the second equality holds because $u$ is bounded on compacts, the final expression is a representation of the Laplace transform of hitting times (see, e.g., \citet{borodin2012handbook}) and $q_1$ is the unique positive root of
\begin{align}\label{laplace}
\tfrac{1}{2}(\beta_1^2+\beta^2_2)q(q-1)+(\mu_2-\mu_1)q-(r-\mu_1)=0.
\end{align}

Letting $z\to 0$ in \eqref{eq:step2} we reach a contradiction because $u(z)\ge 1$ for all $z\in\R_+$. Hence we obtain the existence of $a_*>0$.
\vspace{+5pt}

{\bf Step 3.} ({\em existence of $b_*$}.) First we show that $z_2\!\in\! \cS\cap (z_0,\infty)$ implies $z_3\!\in\! \cS$ for all $z_3>z_2$.
Pick $z_2>z_0$ and assume $z_2\in \cS$. We again proceed by contradiction. Assume there is $z_3>z_2$ such that $z_3\in \cC$. By \eqref{eq:tauhat}, we let $\hat \tau (z_3)$ be the optimal stopping time for the problem starting at $z_3$. Then,
\[
0<\hat\tau(z_3)\le\inf\{t\ge 0: Z^{z_3}_t\le z_2\},\quad\text{$\P$-a.s.},
\]
and, since $z_2>z_0$, we have $\max\{1,(\Pi u)(Z^{z_3}_{\hat \tau})\}=(\Pi u)(Z^{z_3}_{\hat \tau})$. Then, using the supermartingale property \eqref{eq:superm} we have
\begin{align}\label{eq:b1}
u(z_3)=&\E\left[e^{-(r-\mu_1)\hat{\tau}}(\Pi u)(Z^{z_3}_{\hat \tau})\right]\le (\Pi u)(z_3).
\end{align}
The final inequality implies $z_3\in \cS$ as claimed.

Setting $b_*:=\inf\{z>z_0: z\in \cS\}$ it remains to show that $b_*<\infty$. We argue again by contradiction and assume that $(z_0,\infty)\subset \cC$. Then, taking an arbitrary $z>z_0$ and letting $\rho_0=\rho_0(z)$ be the hitting time to $z_0$ as in step 2, by the martingale property of the value function $u$ we obtain
\begin{align}\label{eq:step3}
u(z)=&\E\left[e^{-(r-\mu_1)\rho_0}u(Z^z_{\rho_0})\right]=\E\left[e^{-(r-\mu_1)\rho_0}u(Z^z_{\rho_0})\I_{\{\rho_0<\infty\}}\right]\\
=&u(z_0)\E\left[e^{-(r-\mu_1)\rho_0(z)}\I_{\{\rho_0(z)<\infty\}}\right]=u(z_0)\left(\frac{z}{z_0}\right)^{q_2}
\end{align}
where the second equality holds because of \eqref{eq:limsupu} and the final expression is a representation of the Laplace transform of hitting times, with $q_2$ being the unique negative root of \eqref{laplace}.

Letting $z\to\infty$ the right-hand side of \eqref{eq:step3} tends to zero, hence contradicting $u(z)\ge 1$ for all $z\in\R_+$. Therefore, existence of $b_*<\infty$ follows.
\end{proof}

The shape of the stopping region in the one dimensional problem \eqref{eq:u} translates into that of the original problem \eqref{eq:value2}, as stated in the corollary below.
\begin{corollary}\label{cor:final}
If $F(0)=1$, then the couple $(\tau_*,\alpha^*)=(0,1)$ is optimal in \eqref{eq:value2}. If instead $F(0)<1$, then an optimal couple $(\tau_*,\alpha^*)\in\D$ for problem \eqref{eq:value2} is given by
\begin{align}
\tau_*=\inf\{t\ge 0: K_t\notin ( S_t\cdot a_*,  S_t\cdot b_*)\}\quad\text{and}\quad\alpha^*=\I_{\{K_{\tau_*}\ge  S_{\tau_*}\cdot b_*\}}.
\end{align}
\end{corollary}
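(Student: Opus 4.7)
The plan is to combine Lemma \ref{lem:value} (which reduces the control/stopping problem to optimal stopping and explicitly identifies $\alpha^*$) with the dimensional reduction $v(s,k)=s\,u(k/s)$ from Proposition \ref{prop:dim} and the geometric description of the stopping set from Theorem \ref{thm:final}. The key algebraic identities to exploit are $v(s,k)=s\,u(k/s)$ together with $(\Lambda v)(s,k)=s\,(\Pi u)(k/s)$, the latter being \eqref{eq:u0} in the proof of Proposition \ref{prop:dim}. Setting $\hat Z_t:=K_t/S_t$, since $S_t>0$ one has
\begin{align*}
v(S_t,K_t)=\max\{\varphi(S_t,K_t),(\Lambda v)(S_t,K_t)\}\iff u(\hat Z_t)=\max\{1,(\Pi u)(\hat Z_t)\},
\end{align*}
i.e., the optimal $\tau_*$ of Theorem \ref{thm:prob1} is the first entry time of $\hat Z$ into the one-dimensional stopping set $\cS$ defined in \eqref{eq:Su}. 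Likewise, $(\Lambda v)(X_{\tau_*})>\varphi(X_{\tau_*})$ is equivalent to $(\Pi u)(\hat Z_{\tau_*})>1$, which by monotonicity of $\Pi u$ and the definition of $z_0$ reduces to a condition on the position of $\hat Z_{\tau_*}$ relative to $z_0$.

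For the case $F(0)=1$, Theorem \ref{thm:final} gives $u(z)=1+z$, so $\cS=\R_+$ and $\tau_*\equiv 0$. A direct computation (or the observation used in the proof of Lemma \ref{lem:z_0}) shows $(\Pi u)(z)=\int_0^\infty e^{-(r-\mu_1)t}\E_z[1+Z_t]F(\ud t)\ge 1$ with strict inequality for $z>0$; together with Corollary \ref{cor:F} this yields $\alpha^*=1$, so $(\tau_*,\alpha^*)=(0,1)$.

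For the case $F(0)<1$, Theorem \ref{thm:final} gives $\cS=[0,a_*]\cup[b_*,\infty)$ with $0<a_*<z_0<b_*<\infty$. Translating the stopping rule through $\hat Z_t=K_t/S_t$ yields
\begin{align*}
\tau_*=\inf\{t\ge 0:\hat Z_t\notin(a_*,b_*)\}=\inf\{t\ge 0:K_t\notin(S_t\cdot a_*,S_t\cdot b_*)\},
\end{align*}
where the last equality uses $S_t>0$ and path continuity of $(S,K)$. To identify $\alpha^*$, note that by continuity and strict monotonicity of $\Pi u$ in a neighbourhood of $z_0$, the value $(\Pi u)(\hat Z_{\tau_*})$ exceeds $1$ precisely when $\hat Z_{\tau_*}\ge b_*$ (covering both the case $\hat Z_0\in(a_*,b_*)$, in which $\hat Z_{\tau_*}\in\{a_*,b_*\}$, and the case $\hat Z_0\in\cS$, in which $\tau_*=0$). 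Hence $\alpha^*=\I_{\{\hat Z_{\tau_*}\ge b_*\}}=\I_{\{K_{\tau_*}\ge S_{\tau_*}\cdot b_*\}}$.

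The only delicate point is the rigorous translation between the one-dimensional problem on $Z$ (stated under $\P$) and the pathwise stopping rule for $\hat Z$ (which coincides with $Z$ in law under the auxiliary measure $\Q$ of Lemma \ref{lem:Girs}). This is resolved by noting that the characterisation of $\tau_*$ in Theorem \ref{thm:prob1} is purely pathwise through the identity $v(X_t)=\max\{\varphi(X_t),(\Lambda v)(X_t)\}$, so the equivalence displayed above holds $\P$-a.s.~without any further change of measure, and the optimality of $(\tau_*,\alpha^*)$ for \eqref{eq:value2} follows directly from Lemma \ref{lem:value} and Theorem \ref{thm:prob1}.
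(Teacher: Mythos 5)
Your proposal is correct and follows exactly the route the paper intends: the corollary is stated as an immediate consequence of Theorem \ref{thm:final}, obtained by translating the one-dimensional stopping set back through the identities $v(s,k)=s\,u(k/s)$ and $(\Lambda v)(s,k)=s\,(\Pi u)(k/s)$ and then invoking Theorem \ref{thm:prob1} and Lemma \ref{lem:value} for the optimal pair, which is precisely what you do (including the correct handling of the case $\tau_*=0$ and of the case $F(0)=1$). The only cosmetic remark is that strict monotonicity of $\Pi u$ near $z_0$ is not needed: monotonicity plus the definition of $z_0$ already give $(\Pi u)(z)>1$ iff $z>z_0$, which on $\cS$ is equivalent to $z\ge b_*$.
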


\begin{figure}
  \centering
  \includegraphics[width=10cm]{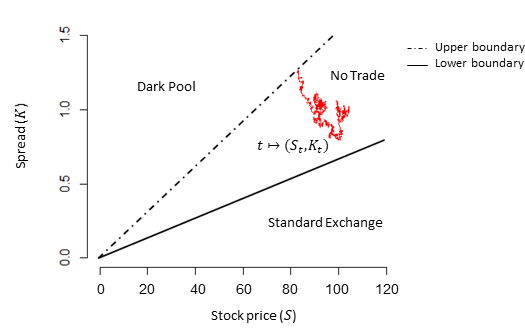}
  \caption{An illustration of continuation and stopping region. The continuation set is a wedge that separates two disconnected portions of the stopping region. These portions correspond to the choice of trading in the standard stock exchange (below the solid line) or in the dark pool (above the dashed line). The trajectory of the two dimensional GBM is simulated.}\label{fig:regions}
\end{figure}

The results of Theorem \ref{thm:final} and Corollary \ref{cor:final} have the following interpretation. First of all, we notice that holding the asset is penalised by effect of discounting (since $r>\max\{\mu_1, \mu_2\}$). Then the trader is unwilling to delay the sale for too long, irrespectively of how low/high the stock price is. Second, our model suggests that what matters in the trader's decision is the ratio between the spread and the stock price.
If the ratio between $K$ and  $S$ is very low (below $a_*$), the trader sells the stock in the standard exchange; indeed, in this case there is no additional benefit in attempting a sale in the dark pool, where the risk of a failed transaction is not compensated by a sufficiently large spread. If instead the ratio between the spread and the bid price is  large (above $b^*$), the trader is willing to take on the additional risk and attempts a sale in the dark pool, see Figure \ref{fig:regions}.

Finally, we comment on the fact that $z_0\in\cC$ (see \eqref{eq:z_0}). When the spread-price ratio equals $z_0$ the trader is faced with an extremely uncertain market condition.  Indeed, by definition $z_0$ is such that $s=(\Lambda v)(s,z_0 s)$. That is, at $z_0$ the payoff from a sale in the standard exchange market is equal to the expected one from a sale in the dark pool. Hence, it is natural for the trader to wait a little longer and see how the market is going to behave.

\subsection{Free boundary formulation}\label{sec:fbp}

Due to continuity of $u$ and thanks to standard optimal stopping theory we know that $u$ is in fact $C^2$ in the continuation set $\cC$ and it satisfies
\begin{align}\label{freeb0}
\big(\cL_Z-(r-\mu_1)\big)u(z)=0,\qquad\text{for $z\in\cC$},
\end{align}
where $\cL_Z$ is the generator of $Z$, that is
\[
(\cL_Z f)(z)=\frac{\beta_1^2+\beta^2_2}{2}z^2f''(z)+(\mu_2-\mu_1)f'(z)\quad\text{for all $f\in C^2(\R_+)$}.
\]

Now, notice that $u\in C^1(\R_+)$ implies that $\Pi u\in C^1(\R_+)$. The explicit dependence of the solution to equation \eqref{eq:Z3} on its initial point and an application of dominated convergence theorem allows us to write
\begin{align}\label{Pi'}
(\Pi u)'(z)=\int_0^\infty e^{-(r-\mu_1)t}\E\left[pZ^1_t+(1-p)u'(Z^z_t)Z^1_t\right]F(\ud t).
\end{align}

Using Proposition \ref{prop:C1} and Theorem \ref{thm:final} we can state the next result, which is the formal statement in this setting of the variational inequality introduced in Remark \ref{rem:VI}.

\begin{proposition}\label{prop:freeb}
Assume $F(0)<1$. Then  $(u,a_*,b_*)$ is the unique triple that solves the following problem:

Find $(\hat u, \hat a, \hat b)$ such that:
\begin{itemize}
\item[(i)] $\hat u\in C^1(\R_+)\cap C^2([\hat a,\hat b])$ and $\hat u$ is super-harmonic, i.e.,
\[
\E\left[e^{-(r-\mu_1)\tau}\hat u(Z^z_\tau)\right]\le \hat u(z),\qquad\text{for all $\tau\in\cT$};
\]
\item[(ii)] $\hat u\ge \max\{1,(\Pi \hat u)\}$ on $\R_+$, with $\hat u(z)>\max\{1,(\Pi \hat u)(z)\}$ iff $z\in(\hat a,\hat b)$;
\item[(iii)] the conditions below hold
\end{itemize}
\begin{align}
\label{freeb1}&\big(\cL_Z-(r-\mu_1)\big)\hat u(z)=0,\qquad\text{for $z\in(\hat a,\hat b)$,}\\
\label{freeb2}&\hat u(\hat a)=1,\quad \hat u(\hat b)=(\Pi\hat u)(\hat b),\quad\text{(continuous fit)},\\
\label{freeb3}&\hat u'(\hat a)=0,\quad \hat u'(\hat b)=(\Pi \hat u)'(\hat b),\quad\text{(smooth fit)}.
\end{align}
\end{proposition}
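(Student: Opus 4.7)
The argument splits naturally into an existence/verification step showing that the triple $(u,a_*,b_*)$ satisfies (i)--(iii), and a uniqueness step that reduces any other candidate to a fixed point of the operator $\tilde\Gamma$, so that the contraction property already established in Theorem \ref{thm:prob1} (transported to the 1-D problem via Proposition \ref{prop:dim}) can be invoked.

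\textbf{Verification of (i)--(iii) for $(u,a_*,b_*)$.} The $C^1(\R_+)$ regularity is Proposition \ref{prop:C1}, and $C^2$-smoothness on $[a_*,b_*]$ follows from classical interior regularity for the 1-D ODE $(\cL_Z-(r-\mu_1))u=0$ with continuous boundary data, extended up to the closure by standard arguments. The super-harmonicity in (i) is just the supermartingale property of Corollary \ref{cor:OS-u} combined with optional sampling. Part (ii) is precisely Theorem \ref{thm:final} together with the definitions \eqref{eq:Cu}--\eqref{eq:Su}. Equation \eqref{freeb1} is the infinitesimal characterization of $u$ inside the continuation set. For continuous fit \eqref{freeb2}, I would observe that on $[0,a_*]$ we have $u\equiv 1$ (since $u\ge 1$ and $u=\max\{1,\Pi u\}=1$ on $[0,z_0)\cap\cS$) while on $[b_*,\infty)$ one has $u=\Pi u$; continuity of $u$ then forces $u(a_*)=1$ and $u(b_*)=(\Pi u)(b_*)$. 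The smooth-fit identities \eqref{freeb3} drop out automatically: $u\in C^1(\R_+)$ together with $u\equiv 1$ on $[0,a_*]$ gives $u'(a_*)=0$, and $u\equiv\Pi u$ on $[b_*,\infty)$ (together with Proposition \ref{prop:C1} and formula \eqref{Pi'}) gives $u'(b_*)=(\Pi u)'(b_*)$.

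\textbf{Uniqueness.} Let $(\hat u,\hat a,\hat b)$ satisfy (i)--(iii) and set $\hat\tau:=\inf\{t\ge 0:Z_t\notin(\hat a,\hat b)\}$, which is $\P_z$-a.s.\ finite since $0<\hat a<\hat b<\infty$ and $Z$ is a non-degenerate one-dimensional GBM. Because $\hat u\in C^2([\hat a,\hat b])$ and satisfies \eqref{freeb1} on $(\hat a,\hat b)$, It\^o's formula applied to $e^{-(r-\mu_1)(t\wedge\hat\tau)}\hat u(Z_{t\wedge\hat\tau})$ kills the drift and yields a bounded martingale (boundedness from continuity of $\hat u$ on $[\hat a,\hat b]$). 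Sending $t\to\infty$ and using continuous fit \eqref{freeb2} to identify $\hat u(Z_{\hat\tau})$ with $\max\{1,(\Pi\hat u)(Z_{\hat\tau})\}$ gives
\begin{equation*}
\hat u(z)=\E_z\!\left[e^{-(r-\mu_1)\hat\tau}\max\{1,(\Pi\hat u)(Z_{\hat\tau})\}\right], \qquad z\in[\hat a,\hat b],
\end{equation*}
while outside $(\hat a,\hat b)$ condition (ii) gives $\hat u=\max\{1,\Pi\hat u\}$, i.e.\ the same identity with $\hat\tau=0$. Conversely, for an arbitrary $\tau\in\cT$, super-harmonicity (i) combined with (ii) gives
\begin{equation*}
\hat u(z)\ge \E_z\!\left[e^{-(r-\mu_1)\tau}\hat u(Z_\tau)\right]\ge \E_z\!\left[e^{-(r-\mu_1)\tau}\max\{1,(\Pi\hat u)(Z_\tau)\}\right].
\end{equation*}
Combining the two displays, $\hat u=\tilde\Gamma\hat u$ on $\R_+$. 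By uniqueness of the fixed point of $\tilde\Gamma$ in $\cA^+_1$ (Proposition \ref{prop:dim} and Theorem \ref{thm:prob1}), $\hat u=u$; the boundaries $\hat a,\hat b$ are then pinned down by (ii) and the description of $\cC$ in Theorem \ref{thm:final}, yielding $(\hat a,\hat b)=(a_*,b_*)$.

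\textbf{Main obstacle.} The delicate point is ensuring that the candidate $\hat u$ genuinely lies in $\cA^+_1$, which is required to appeal to the contraction uniqueness of $\tilde\Gamma$. The lower bound $\hat u\ge 1$ is immediate from (ii); the linear upper bound is less direct because $\Pi$ is non-local and $\hat u$ appears on both sides of $\hat u=\max\{1,\Pi\hat u\}$ outside $[\hat a,\hat b]$. I expect to handle this by bootstrapping: substituting $\hat u\ge 1$ into $\Pi\hat u$ gives an explicit linear upper envelope in terms of $\E_z[Z_t]$, which in turn controls $\hat u$ on the outer stopping region; on $[\hat a,\hat b]$ the bound is trivial by continuity. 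Once $\hat u\in\cA^+_1$ is secured, the rest of the uniqueness argument is straightforward verification theory.
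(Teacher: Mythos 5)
Your proposal is correct and follows essentially the same route as the paper: verification of (i)--(iii) via Proposition \ref{prop:C1}, Corollary \ref{cor:OS-u} and Theorem \ref{thm:final}, and uniqueness by showing $\hat u=\tilde\Gamma\hat u$ (super-harmonicity plus (ii) for the ``$\ge$'' direction, the ODE \eqref{freeb1} with the exit time $\hat\tau_{a,b}$ via It\^o for the equality) and then invoking uniqueness of the fixed point. The $\cA^+_1$-membership issue you flag at the end is not addressed in the paper's proof either, which passes directly from $\hat u=\tilde\Gamma\hat u$ to $\hat u=u$ via the contraction argument, so your extra caution (even if the bootstrap is only sketched) goes beyond, rather than falls short of, the paper's own argument.
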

\begin{proof}
The fact that $u$ dominates $\max\{1,(\Pi u)\}$ and it is super-harmonic is given by Corollary \ref{cor:OS-u}, whilst $u>\max\{1,(\Pi u)\}$ on $(a_*,b_*)$ follows by definition of $a_*$ and $b_*$ in Theorem \ref{thm:final}. Thanks to Proposition \ref{prop:C1} and \eqref{freeb0}, we have that \eqref{freeb1}, \eqref{freeb2} and \eqref{freeb3} hold. Continuity of $u''$ on $[a_*,b_*]$ can be derived directly from \eqref{freeb1} by taking limits as $z\to \{a_*,b_*\}$ and noticing that the terms involving $u$ and $u'$ are continuous on $\R_+$.

As for uniqueness, by the super-harmonic property, we have that any $\hat u$ solving the problem (i)--(iii) also satisfies
\[
\hat u(z)\ge \E\left[e^{-(r-\mu_1)\tau}\hat u(Z^z_\tau)\right]\ge \E\left[e^{-(r-\mu_1)\tau}\max\{1,(\Pi \hat u)(Z^z_\tau)\right], \quad \text{for any $\tau\in\cT$}.
\]
Furthermore, using \eqref{freeb1} and the stopping time $\hat \tau_{a,b}:=\inf\{t\ge 0:Z_t\notin(\hat a,\hat b)\}$, we also obtain
\[
\hat u(z)= \E\left[e^{-(r-\mu_1)\tau}\max\{1,(\Pi\hat u)(Z^z_{\hat \tau_{a,b}})\right],
\]
hence
\[
\hat u(z)=\sup_{\tau\in \cT}\E\left[e^{-(r-\mu_1)\tau}\max\{1,(\Pi \hat u)(Z^z_\tau)\right].
\]

Uniqueness of the fixed point (see Theorem \ref{thm:prob1},  Corollary \ref{cor:u}) implies that $\hat u= u$ and therefore $(\hat a,\hat b)=(a_*,b_*)$.
\end{proof}

The free boundary formulation of the stopping problem may prove useful to compute (at least numerically) the values of $a_*$ and $b_*$, which can be used to find $u$. It should be noticed, however, that a direct solution of the free boundary problem, as commonly performed in one-dimensional optimal stopping problems, is far from being trivial in this case, because the boundary conditions at $b_*$ involve the value function itself in a non-local way.

Alternatively, one may follow a recursive scheme, based on the fixed point argument of Corollary \ref{cor:u}, in order to calculate approximating optimal boundaries associated with the stopping problems with value $\tilde\Gamma g_n$. Although it is easy to show\footnote{Notice that if $g_n\ge g_{n-1}$ then $\Pi g_n\ge \Pi g_{n-1}$ and $g_{n+1}=\tilde\Gamma g_n\ge \tilde\Gamma g_{n-1}=g_n$. Then, if $z\le a^{n+1}_*$ (i.e.~$g_{n+1}(z)=1$) we also have $g_{n}(z)=1$, hence $z\le a^n_*$. This implies $a^{n+1}_*\le a^n_*$ as claimed. Convergence of $a^n_*$ to $a_*$ follows from (monotonic) convergence of $g_n$ to $u$ in $\cA^+_1$.} that the corresponding sequence of approximating lower boundaries $a^n_*$ is decreasing and $a^n_*\downarrow a_*$, it seems much harder to determine monotonicity of the sequence of the approximating upper boundaries $b_*^n$.

\begin{remark}
The infinite-time horizon and the dimension reduction (Proposition \ref{prop:dim}) allowed us to use methods from the general theory of one-dimensional linear diffusions for the study of the value function and of the optimal boundaries in our optimal trading problem. That also led to a fairly explicit formulation of the associated free boundary problem in the proposition above.

Extending the analysis to a finite-time horizon set-up is non-trivial. Indeed, the dimension reduction is still feasible but it leads to a time-inhomogeneous optimal stopping problem for the process $(t,Z)$. Then the optimal boundaries in the reduced problem should be determined as functions
of time and we may reasonably expect the existence of maps $t\mapsto a_*(t)$ and $t\mapsto b_*(t)$ in place of the constant boundaries  found above. A key difficulty in the analysis is to determine properties of such time-dependent boundaries (e.g., monotonicity, continuity, etc.), which in turn will affect the smoothness of the value function. Traditionally these questions can be approached either via probabilistic methods as in, e.g., \cite{PS}, or via variational methods as in, e.g., \cite{Fr88}. In both instances,
classical results available in the literature cannot be directly applied to our case, because the recursive structure of our problem leads to boundary conditions that are {\em non-local}. Indeed, they depend in a functional manner on the solution (value function) via the operator $\Pi$ (or $\Lambda$ before dimension reduction) as illustrated in equations \eqref{freeb1}--\eqref{freeb3} or \eqref{eq:VI}. It seems however interesting to research how/if the PDE methods for parabolic free boundary problems with one spatial dimension (see, e.g., \cite{friedman1975parabolic} and \cite{Cannon} among the classical references) can be adapted to deal with our situation.
We leave this questions for future work.
\end{remark}

\subsection{Comparisons with a non-recursive model with dark pool}\label{sec:comparison}
Here we show that the optimal trading strategy obtained above is quantitatively and/or qualitatively different to optimal strategies arising from models without recursion.

The first simple observation is that in the absence of a dark pool the stock selling problem, under our assumption $\mu\le r$, becomes trivial as $e^{-rt}S_t$ is at best a martingale. Hence
\[
\sup_{\tau}\E_s\big[e^{-r\tau}S_\tau\big]\le s,
\]
by optional sampling theorem for positive (super)martingales and the optimal stopping rule prescribes immediate sale of the asset. If instead $\mu>r$ the problem is ill-posed: the trader would postpone the sale indefinitely since the discounted stock price is a sub-martingale. In this respect the addition of the dark pool introduces structural differences in the trader's optimal strategy.

Secondly, we investigate the impact of a recursive structure on the solution of the optimization problem, by comparing the optimal strategy found in Corollary \ref{cor:final} with the optimal strategy in an analogous problem but without recursion. As before we assume that a trader can place an order either in the lit market or in the dark pool. However, if the trade in the dark pool is not successful the trader turns directly to the lit market and sells the stock at the current price at time $\tau+\vartheta$. Then, the optimisation problem reads as in \eqref{eq:value2} but replacing $v(S_{\tau+\vartheta},K_{\tau+\vartheta})$ therein with $\gamma S_{\tau+\vartheta}$ (i.e., the sale price in the lit market if the transaction in the dark pool falls through). For the value function we have
\begin{align}\label{eq:hatv}
\hat v(s,k)=\sup_{\tau\in\cT}\E_{s,k}\big[e^{-r\tau}\max\{\gamma S_\tau,Q(S_\tau,K_\tau)\}\big],
\end{align}
where
\begin{align}\label{eq:Q}
Q(s,k):=\int_0^\infty e^{-rt}\E_{s,k}\big[p(S_t+K_t)+(1-p)\gamma S_t\big]F(\ud t).
\end{align}
Again, we set $\gamma=1$ for simplicity and with no loss of generality. The function $\hat v$ can be thought of as the value of a real option that gives a trader a one-off access to the dark pool.

Since $v(s,k)\ge s$ by construction (see \eqref{eq:value2}), then $Q(s,k)\le (\Lambda v)(s,k)$ for $(s,k)\in\R^2_+$. Hence, $\hat v\le v$ as expected: a one-off access to the dark pool is less valuable that an indefinite access to it. In particular the premium $\delta(s,k):=(v-\hat v)(s,k)$ is the additional cost that a trader should/may be willing to pay in order to gain indefinite access to the dark pool (as a real option) for the sale of a single stock.

Thanks to the linear structure of the payoff, once again the problem can be reduced to a one-dimensional set-up by the same transformation performed in the proof of Proposition \ref{prop:dim}. That is, setting
\[
A(z):=Q(1,z)=\int_0^\infty e^{-(r-\mu_1)t}\E_{z}\big[p(1+Z_t)+(1-p) \big]F(\ud t)
\]
and $\hat u(z)=\hat v(1,z)$ we have
\[
\hat u(z)=\sup_{\tau\in\cT}\E_z\big[e^{-(r-\mu_1)\tau}\max\{1,A(Z_\tau)\}\big].
\]

We focus on the more interesting case where $F(0)<1$. By repeating analogous calculations as in Section \ref{sec:boundaries} we obtain similar conclusions (due to the explicit form of the function $A$ computations are easier in this case). The function $A$ is increasing, so we can set
\[
\hat z_0=\inf\{z\in\R_+: A(z)>1\}
\]
and clearly $\hat z_0\ge z_0$ since $A(z)\le \Pi u(z)$ for $z\in\R_+$. Letting
\[
\hat \cC:=\big\{z\in\R_+: \hat u(z)>\max\{1,A(z)\}\big\}
\]
we can identify two optimal boundaries $0<\hat a<\hat z_0<\hat b <\infty$ so that $\hat \cC=(\hat a,\hat b)$. Then, as in Corollary \ref{cor:final}, the optimal pair $(\hat \tau,\hat \alpha)$ reads
\[
\hat \tau:=\inf\{t\ge 0: K_t\notin(S_t\cdot \hat a,S_t\cdot\hat b)\}\quad\text{and}\quad\hat \alpha:=\mathds{1}_{\{K_{\hat \tau}\ge S_{\hat \tau}\cdot\hat b\}}.
\]

Some interesting economic conclusions can be drawn by comparing the two sets $\cC$ and $\hat \cC$.
First of all, since $\hat u(z)\le u(z)$, then
\[
\{z\in\R_+: u(z)=1\}\subset \{z\in\R_+: \hat u(z)=1\},
\]
which implies
\[
\hat a\ge a_*.
\]
This means that the trader with indefinite access to the dark pool (i.e., whose objective is given in \eqref{eq:value2}) will delay the decision to sell the stock in the lit market compared to the trader with a one-off access to the dark pool (i.e., with objective as in \eqref{eq:hatv}). This is in line with the intuition that, since the dark pool is more attractive than the lit market and the `recursive' trader can attempt repeatedly a sale in the dark pool, the stock price $S$ must be sufficiently high (relative to the spread $K$) to convince a `recursive' trader to sell in the lit market. In contrast, the trader with a one-off chance to trade in the dark pool would be less inclined to delay a sale in the lit market when faced with a high stock price $S$ (relative to the spread), as the financial incentive offered by the dark pool is less pronounced.

By the same financial intuition we would also expect that the trader with unlimited access to the dark pool should attempt a sale in the dark pool earlier than the trader with a single opportunity. Due to the lack of explicit formulae for the solution of both problem \eqref{eq:value2} and \eqref{eq:hatv}, this conjecture is difficult to check in general. Nonetheless, we now show it is confirmed in the case $r=\mu_1$.

Recall that when $r=\mu_1$ we can still prove Theorem \ref{thm:prob1} but in the space $\cA'_2$ as explained in Remark \ref{rem:spaceAprime} (details are provided in the Appendix \ref{sec:appGBM}). In particular, the process $t\mapsto e^{-rt}(1+S_t+K_t)$ is a supermartingale if $(\mu_1-r) s+(\mu_2-r) k - r\le 0$. For $\mu_1=r$ the latter is verified for any $\mu_2\le r$ and we will assume $\mu_2<r$ to also guarantee that the analogue of (ii) and (iii) in Assumption \ref{ass:conv} hold (see Appendix \ref{sec:appGBM}).  Now, recalling the optimal stopping time $\tau_*$ for the recursive problem, we have
\begin{align}\label{eq:v-hatv}
v(s,k)-\hat v(s,k)\le \E_{s,k}\big[e^{-r\tau_*}\big(\Lambda v(S_{\tau_*},K_{\tau_*})-Q(S_{\tau_*},K_{\tau_*})\big)\big],
\end{align}
where the inequality holds because $\tau_*$ is sub-optimal for $\hat v(s,k)$ and $\max\{1,\Lambda v\}-\max\{1,Q\}\le \Lambda v-Q$, since $\Lambda v\ge Q$. Recalling the expressions for $\Lambda v$ and $Q$ in \eqref{eq:Lambdaf} and \eqref{eq:Q}, respectively, we have
\begin{align*}
&\E_{s,k}\Big[e^{-r\tau_*}\Big(\Lambda v(S_{\tau_*},K_{\tau_*})-Q(S_{\tau_*},K_{\tau_*})\Big)\Big]\\
&=(1-p)\E_{s,k}\Big[e^{-r\tau_*}\E_{S_{\tau_*},K_{\tau_*}}\Big[\int_0^\infty e^{-rt}\big(v(S_t,K_t)-S_t\big)F(\ud t)\Big]\Big]\\
&=(1-p)\int_0^\infty \E_{s,k}\Big[ e^{-r(\tau_*+t)}\big(v(S_{\tau_*+t},K_{\tau_*+t})-S_{\tau_*+t}\big)\Big]F(\ud t),
\end{align*}
by the strong Markov property and Fubini's theorem. Since $v\ge s$ we can combine Fatou's lemma and the optional sampling theorem to get
\begin{align*}
&\int_0^\infty \E_{s,k}\Big[ e^{-r(\tau_*+t)}\big(v(S_{\tau_*+t},K_{\tau_*+t})-S_{\tau_*+t}\big)\Big]F(\ud t)\\
&=\int_0^\infty \E_{s,k}\Big[\liminf_{n\to\infty} e^{-r((\tau_*\wedge n)+t)}\big(v(S_{\tau_*\wedge n+t},K_{\tau_*\wedge n+t})-S_{\tau_*\wedge n+t}\big)\Big]F(\ud t)\\
&\le\liminf_{n\to\infty}\int_0^\infty \E_{s,k}\Big[ e^{-r((\tau_*\wedge n + t)}\big(v(S_{\tau_*\wedge n + t},K_{\tau_*\wedge n + t})-S_{\tau_*\wedge n + t}\big)\Big]F(\ud t)\\
&\le\int_0^\infty \E_{s,k}\Big[ e^{-r t}\big(v(S_{t},K_{t})-S_{t}\big)\Big]F(\ud t)=\Lambda v(s,k)-Q(s,k),
\end{align*}
where in the final inequality we just used that $t\mapsto e^{-rt}v(S_{t},K_{t})$ is a supermartingale and $t\mapsto e^{-rt} S_t$ is a martingale on $[0,t+n]$ for each $n\ge 1$.

Combining the expressions above with \eqref{eq:v-hatv} we obtain
\[
v(s,k)-\hat v(s,k)\le \Lambda v(s,k)-Q(s,k),\quad \text{for $(s,k)\in\R^2_+$}.
\]
The latter implies
\[
\{(s,k)\in\R^2_+: \hat v(s,k)=Q(s,k)\}\subset \{(s,k)\in\R^2_+: v(s,k)=\Lambda v(s,k)\},
\]
which implies
\[
b_*\le \hat b
\]
as claimed.

In conclusion, we observe that the trader with unlimited access to the dark pool delays the sale in the lit market and anticipates the one in the dark pool, compared to a trader with a single access to the dark pool. From a geometric perspective we observe that the continuation wedge $\cC$ of the `recursive' problem is tilted clockwise compared to the one for the non-recursive problem, $\hat \cC$, although the aperture of the two wedges is not necessarily the same (see the illustration in Figure \ref{fig:nonrecursive}).

\begin{figure}
  \centering
  \includegraphics[width=10cm]{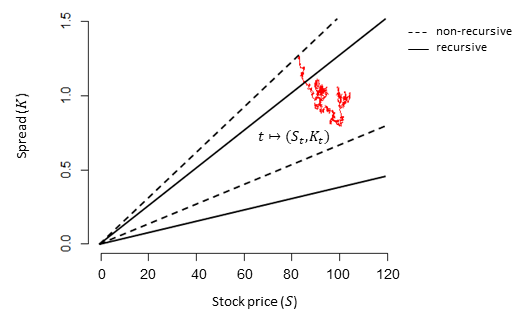}
  \caption{An illustration of continuation and stopping region for the recursive (solid line) and non-recursive (dashed line) trading problem. The continuation set corresponding to the recursive problem (the wedge between the solid lines) is tilted clockwise compared to the continuation region for the non-recursive problem (the wedge between the dashed lines). The aperture of the wedges needs not be the same.}
\label{fig:nonrecursive}
\end{figure}

\appendix

\section{The space $\cA_d$}
Here we show that $(\cA_d, \|\cdot\|_{\cA_d})$ is a Banach space. To start, we observe that $\|\cdot\|_{\cA_d}$ is a norm. To show completeness of the space we consider a Cauchy sequence $(f_n)_{n \in \mathbb{N}}\subset \cA_d$. Then for any given $\eps>0$ there is $N_{\eps}>0$ such that for every $n, m>N_{\eps}$ one has
$\|f_m-f_n\|_{\cA_d}<\eps$. By the definition of $\|\cdot\|_{\cA_d}$, we have that
\begin{equation}\label{eq:cauchy}
\frac{|f_m(x)-f_n(x)|}{(1+|x|_d^2)^{1/2}}<\eps,
\end{equation}
for every $x \in \R^d$ and every $n,m>N_{\eps}$. This implies that for each $x \in \R^d$ the sequence $(f_n(x))_{n\in\mathbb{N}}\subset \R$ is Cauchy. Therefore there exists a function $f:\R^d\to\R$ such that
\begin{equation}
f(x)=\lim_{m \to \infty} f_m(x),
\end{equation}
for all $x\in \R^d$. If we take the limit as $m \to \infty$ in \eqref{eq:cauchy} we get that
\begin{equation}
\frac{|f(x)-f_n(x)|}{(1+|x|_d^2)^{1/2}}<\eps,
\end{equation}
for every $x \in \R^d$ and every $n>N_{\eps}$. Hence $\|f-f_n\|_{\cA_d}<\eps$ and $\|f\|_{\cA_d}<\infty$.

To conclude that $f \in \cA_d$ we need to show that $f$ is continuous. Let $(f_n)_{n \in \mathbb{N}}\subset \cA_d$ be the  Cauchy sequence from the paragraph above with $f_n\to f$ as $n \to \infty$. Fix $x_0\in \R^d$ and take a sequence $(x_k)_{k \in \mathbb{N}}\subset\R^d$ such that $x_{k}\to x_0$ as $k \to \infty$. Without loss of generality we can assume that there is a compact $U\subset\R^d$ such that $x_0\in U$ and $(x_k)_{k \in \mathbb{N}}\subset U$.  Fix $\eps>0$, then for any $n\ge N_\eps$ we have
\begin{align}
|f(x_k)-f(x_0)|&\leq( |f(x_k)-f_n(x_k)|+|f_n(x_k)-f_n(x_0)|+|f_n(x_0)-f(x_0)|)\\
&\leq \|f_n-f\|_{\cA_d} (2+|x_k|_d^2+|x_0|_d^2)^{1/2}+|f_n(x_k)-f_n(x_0)|\\
&\le c_U\eps+|f_n(x_k)-f_n(x_0)|,
\end{align}
where $c_U:=[2\sup_{x\in U}(1+|x|^2_d)]^{1/2}$. Thanks to arbitrariness of $\varepsilon$, taking limits as  $k\to\infty$ proves continuity. 

\section{Existence of the value function in the case of $\mu_1=r$}\label{sec:appGBM}

Here we provide the details for the proof of Theorem \ref{thm:prob1} in the setting of Section \ref{sec:comparison}.
In particular we assume $\mu_1=r$ and $\mu_2<r$. Under this assumption some care is needed since $(e^{-rt}S_t)_{t\ge 0}$ is no longer a uniformly integrable process but it is a positive martingale
for $t\in[0,\infty)$, hence a (positive) supermartingale for $t\in[0,\infty]$ with
\[
e^{-r\tau}S_{\tau}\mathds 1_{\{\tau=\infty\}}:=\lim_{t\to\infty}e^{-rt}S_{t}=0, \quad\text{$\P_{s}$-a.s.\ for all $s\in\R_+$.}
\]
Therefore $\E_s[e^{-r\tau}S_\tau]\le s$ for any $\tau\in\cT$ by optional sampling (see \cite[Thm.3.22, Ch.1]{KS2}). Moreover, recalling Lemma \ref{lem:value},
it is sufficient to consider the problem
\begin{align}\label{eq:t-v}
v(s,k)=\sup_\tau\E_{s,k}\big[e^{-r\tau}\max\{S_\tau,(\Lambda v)(S_\tau,K_\tau)\}\big].
\end{align}
Lemma \ref{lem:homo} continues to hold with the same proof. Then, setting $z=k/s$ and $s u(z):=s v(1,z)=v(s,k)$ it is clear that $u\in\cA'_1$ iff $v\in\cA'_2$ (recall $\cA'_2$ as in \eqref{eq:A2b} and notice that $\cA'_1$ is the analogue for the one dimensional process $Z$ from \eqref{eq:Z3}). Repeating the same arguments of proof as in Proposition \ref{prop:dim} we conclude that $v\in\cA'_2$ and the problem in \eqref{eq:t-v} is well-posed iff $u\in\cA'_1$ and it solves
\[
u(z)=\sup_\tau\E_{z}\big[\max\{1,(\Pi u)(Z_\tau)\}\big].
\]
Here we have set
\[
\Pi f(z)=\int_0^\infty\E_z\big[p(1+Z_t)+(1-p)f(Z_t)\big]F(\ud t),\quad\text{for any $f\in\cA'_1$},
\]
since $\mu_1=r$, and the dynamics of $Z$ under $\P_z$ reads (cf.\ \eqref{eq:Z3})
\[
\ud Z_t=(\mu_2-r)Z_t\ud t+\sqrt{\beta^2_1+\beta^2_2}Z_t\ud \tilde{B}_t,\qquad Z_0=z.
\]
Notice that since $\mu_2<r$ the process $t\mapsto Z_t$ is a positive supermartingale for $t\in[0,\infty]$ with
\begin{align}\label{eq:limZ0}
\lim_{t\to\infty}Z_t=0,\quad \text{$\P_z$-a.s.\ for all $z\in[0,\infty)$.}
\end{align}
Using the same arguments as in Lemma \ref{lem:Lambda} we have $f\in\cA'_1\implies \Pi f\in\cA'_1$, thanks to the supermartingale property of $Z$. Then, by the linear growth of $f\in\cA'_1$ and the supermartingale property of $Z$ we can easily show that
\begin{align}\label{eq:subZ}
|(\tilde\Gamma f)(z)|\le c(1+z)
\end{align}
for some $c>0$ (with $\tilde\Gamma$ as in \eqref{eq:ug}).

Next we want to show that Lemma \ref{lem:OS} holds for $\tilde \Gamma f$. In order to apply the results from general optimal stopping theory as in the proof of Lemma \ref{lem:OS},  we need to check the analogue of condition \eqref{eq:usc}, i.e.,
\[
\E_z\Big[\sup_{t\ge 0}\max\{1,(\Pi f)(Z_t)\}\Big]<\infty.
\]
By linear growth of $\Pi f$, this boils down to verifying
\begin{align}\label{eq:supZ}
\E_z\Big[\sup_{t\ge 0}Z_t\Big]<\infty.
\end{align}

The latter holds because, setting $\beta:=\sqrt{\beta^2_1+\beta^2_2}$, $\kappa:=r+\tfrac{\beta^2}{2}-\mu_2>0$ and $Y_t:=-\kappa t+\beta \tilde B_t$ for notational convenience, we have
\begin{align}\label{eq:supZ2}
\E_z\Big[\sup_{t\ge 0}Z_t\Big]=&z\E\Big[\exp\Big(\sup_{t\ge 0}Y_t\Big)\Big]=z\,\tfrac{2\kappa}{\beta^2}\int_0^\infty e^{y-\tfrac{2\kappa}{\beta^2}y}\ud y=:z\, c_0<\infty,
\end{align}
where we used $\P(\sup_{t\ge 0}Y_t>y)=\exp (-\tfrac{2\kappa}{\beta^2}y)$ \cite[Ch.~1, Sec.~3.5.C]{KS2} and the integral is finite because $2\kappa>\beta^2$ thanks to $\mu_2<r$.
Combining \eqref{eq:limZ0}--\eqref{eq:supZ2}, we also have
\begin{align}\label{eq:lim-supZ}
\lim_{s\to\infty}\P_z\Big(\sup_{t\ge s}Z_t>\eps\Big)=&\lim_{s\to\infty}\E_z\Big[\P_{Z_s}\Big(\sup_{t\ge 0}Z_t>\eps\Big)\Big]\\
\le & \frac{1}{\eps}\lim_{s\to\infty}\E_z\Big[\E_{Z_s}\Big[\sup_{t\ge 0}Z_t\Big]\Big]\\
=&\frac{c_0}{\eps}\lim_{s\to\infty}\E_z\big[Z_s\big]=\frac{c_0}{\eps}\E_z\big[\lim_{s\to\infty}Z_s\big]=0,\quad\text{for all $\eps>0$},
\end{align}
where the inequality is Markov's inequality, the first equality is by the Markov property of $Z$, the second one by \eqref{eq:supZ2} and the final one by dominated convergence and \eqref{eq:supZ}.

The only remaining hurdle to prove Theorem \ref{thm:prob1} in this context is the continuity of $z\mapsto \tilde\Gamma f(z)$ for $f\in\cA'_1$. Indeed, while lower semi-continuity follows from Lemma \ref{lem:OS}, the proof of upper semi-continuity needs a different argument. If we simply repeat the estimates in the proof of Lemma \ref{lem:usc-G}, in the final equation in step 1 we can no longer let $S\to\infty$ since $r=0$. We follow a slightly different route taking advantage of the explicit nature of the dynamics.

Given a sequence $(z_n)_{n\ge 1}$ converging to $z$, as in \eqref{eq:usc1} we have
\begin{align*}
\tilde\Gamma f(z_n)-\tilde\Gamma f(z)\le& \E\big[\max\{1,(\Pi f)(Z^{z_n}_{\tau_n})\}-\max\{1,(\Pi f)(Z^{z}_{\tau_n})\}\big]\\
\le& \E\big[\big|(\Pi f)(Z^{z_n}_{\tau_n})-(\Pi f)(Z^{z}_{\tau_n})\big|\big].
\end{align*}
Fix $\eps>0$, then for any $\delta\in(0,1)$ there exists $s_{\eps,\delta}>0$ such that $\P\big(\sup_{t\ge s_{\eps,\delta}}Z^1_t>\eps\big)\le \delta$ thanks to \eqref{eq:lim-supZ}. Then,
\begin{align}\label{eq:cont-r0}
&\tilde\Gamma f(z_n)-\tilde\Gamma f(z)\\
&\le \E\big[\big|(\Pi f)(z_nZ^{1}_{\tau_n})-(\Pi f)(zZ^{1}_{\tau_n})\big|\mathds 1_{\{\tau_n\le s_{\eps,\delta}\}}\big]\notag\\
&\quad +\E\big[\big|(\Pi f)(z_nZ^{1}_{\tau_n})-(\Pi f)(zZ^{1}_{\tau_n})\big|\mathds 1_{\{\tau_n> s_{\eps,\delta}\}}\big]\notag\\
&\le \E\big[\sup_{0\le t\le s_{\eps,\delta}}\big|(\Pi f)(z_nZ^{1}_{t})-
(\Pi f)(zZ^{1}_{t})\big|\mathds 1_{\{\tau_n\le s_{\eps,\delta}\}}\big]\notag\\
&\quad +\E\big[\big|(\Pi f)(z_nZ^{1}_{\tau_n})-(\Pi f)(zZ^{1}_{\tau_n})\big|\mathds 1_{\{\tau_n> s_{\eps,\delta}\}}\big].\notag
\end{align}

Let us consider the first term in the final expression above. It is immediate that
\[
\E\big[\sup_n\big(\sup_{t\ge 0}Z^{z_n}_t\big)\big]\le \E\big[\sup_{t\ge 0}Z^{z}_t\big]\sup_n(z_n/z)<\infty
\]
and therefore, using the linear growth of $\Pi f$ we have
\[
\E\big[\sup_{n}\sup_{t\ge 0}\big|(\Pi f)(Z^{z_n}_{t})-(\Pi f)(Z^{z}_{t})\big|\big]<\infty.
\]
Then, by dominated convergence we have
\begin{align*}
&\limsup_{n\to\infty}\E\big[\sup_{0\le t\le s_{\eps,\delta}}\big|(\Pi f)(z_nZ^{1}_{t})-
(\Pi f)(zZ^{1}_{t})\big|\mathds 1_{\{\tau_n\le s_{\eps,\delta}\}}\big]\\
&\le\E\big[\limsup_{n\to\infty}\sup_{0\le t\le s_{\eps,\delta}}\big|(\Pi f)(z_nZ^{1}_{t})-
(\Pi f)(zZ^{1}_{t})\big|\big]=0
\end{align*}
where the final equality is due to the fact that $(t,y)\mapsto \big|(\Pi f)(yZ^{1}_{t})-
(\Pi f)(zZ^{1}_{t})\big|$ is continuous, hence uniformly continuous on compacts. This takes care of the first term in the final expression of \eqref{eq:cont-r0}.

For the other term we denote $A_{\eps,\delta}:=\{\omega:\sup_{t\ge s_{\eps,\delta}}Z^1_t(\omega)>\eps\}$ and by $A^c_{\eps,\delta}$ its complement. Then we split the expectation on the events
\[
\{\tau_n>s_{\eps,\delta}\}\cap A_{\eps,\delta}\quad\text{and}\quad \{\tau_n>s_{\eps,\delta}\}\cap A^c_{\eps,\delta}.
\]
On the first event we have
\begin{align*}
&\limsup_{n\to\infty}\E\big[\big|(\Pi f)(z_nZ^{1}_{\tau_n})-(\Pi f)(zZ^{1}_{\tau_n})\big|\mathds 1_{\{\tau_n> s_{\eps,\delta}\}\cap A_{\eps,\delta}}\big]\\
&\le \limsup_{n\to\infty} (z_n+z)\|\Pi f\|_{\cA'_1}\E\big[\sup_{t\ge 0} Z^{1}_{t} \mathds 1_{A_{\eps,\delta}}\big]\\
&\le 2z\|\Pi f\|_{\cA'_1}\E\big[\sup_{t\ge 0} Z^{1}_{t} \mathds 1_{A_{\eps,\delta}}\big].
\end{align*}
On the other event we have
\begin{align*}
&\limsup_{n\to\infty}\E\big[\big|(\Pi f)(z_nZ^{1}_{\tau_n})-(\Pi f)(zZ^{1}_{\tau_n})\big|\mathds 1_{\{\tau_n> s_{\eps,\delta}\}\cap A^c_{\eps,\delta}}\big]\\
&\le\limsup_{n\to\infty} \E\big[\sup_{0\le y\le \eps}\big|(\Pi f)(z_n y)-(\Pi f)(z y)\big|\big]\\
&\le \E\big[\limsup_{n\to\infty}\sup_{0\le y\le \eps}\big|(\Pi f)(z_n y)-(\Pi f)(z y)\big|\big]=0,
\end{align*}
by dominated convergence and uniform continuity of $(\zeta,y)\mapsto\big|(\Pi f)(\zeta y)-(\Pi f)(z y)\big|$ on compacts.

Combining the estimates above we get
\begin{align*}
&\limsup_{n\to\infty}\tilde\Gamma f(z_n)-\tilde\Gamma f(z)\le 2z\|\Pi f\|_{\cA'_1}\E\big[\sup_{t\ge 0} Z^{1}_{t} \mathds 1_{A_{\eps,\delta}}\big].
\end{align*}
Letting $\delta\to 0$ we have $s_{\eps,\delta}\to\infty$ and $\P(A_{\eps,\delta})\downarrow 0$. Then, by \eqref{eq:supZ} and monotone convergence we conclude that
$\limsup_{n\to\infty}\tilde\Gamma f(z_n)-\tilde\Gamma f(z)\le 0$ as needed.

Thanks to the continuity, we have $\tilde \Gamma f\in\cA'_1$ for all $f\in\cA'_1$ and repeating the arguments of proof of Theorem \ref{thm:prob1} the operator $\tilde \Gamma$ is also a contraction. Hence, all the results in Theorem \ref{thm:prob1} continue to hold.

\bibliographystyle{plainnat}
\bibliography{bibliography}

\end{document}